\newcommand{\CI}{{\mathcal C}^\infty}
\newcommand{\CIc}{{\mathcal C}^\infty_{\rm{c}}}
\newtheorem{theo}{Theorem}
\newtheorem{cor}{Corollary}
\newtheorem{prop}{Proposition}[section]
\numberwithin{equation}{section}
\numberwithin{figure}{section}
\newcommand{\neigh}{\operatorname{neigh}}
\DeclareMathOperator{\supp}{supp}
\title[Weighted eigenfunction estimates]%
{Weighted eigenfunction estimates with applications to compressed sensing}
\author{Nicolas Burq}
\email{nicolas.burq@math.u-psud.fr}
\address{D\'epartement de Math\'ematiques, Universit\'e Paris-Sud,
91405 Orsay Cedex, France} 
\author{Semyon Dyatlov}
\email{dyatlov@math.berkeley.edu}
\address{Department of Mathematics, University of California,
Berkeley, CA 94720, USA}
\author{Rachel Ward}
\email{rward@math.utexas.edu}
\address{Mathematics Department, 
University of Texas at Austin, 
              Austin, TX 78712, USA}
\author{Maciej Zworski}
\email{zworski@math.berkeley.edu}
\address{Department of Mathematics, University of California,
Berkeley, CA 94720, USA}
\begin{document}

\begin{abstract}
Using tools from semiclassical analysis, we give weighted $L^\infty$
estimates for eigenfunctions of strictly convex surfaces of
revolution.  These estimates give rise to new sampling techniques and
provide improved bounds on the number of samples necessary for
recovering sparse eigenfunction expansions on surfaces of revolution.
On the sphere, our estimates imply that any function having an
$s$-sparse expansion in the first $N$ spherical harmonics can be
efficiently recovered from its values at $m \gtrsim s N^{1/6}
\log^4(N)$ sampling points.
\end{abstract}

\maketitle

\section{Introduction}

Consider the sphere and a chosen rotational action 
generated by $ \partial_\varphi $:
\begin{gather*}
{\mathbb S}^2 := \{x \in \mathbb R^3  \, : \, x_1^2+x_2^2+x_3^2=1\}\,,
\ \ \  \partial_\varphi=x_1 \partial_{x_2}-x_2 \partial_{x_1} \,, \\
\mathbb S^2 \ni x = ( \cos \varphi \sin\theta, \sin \varphi \sin
\theta, \cos \theta ) \,, \ \  0 \leq  \theta  \leq \pi \,, \ \ 0 \leq
\varphi \leq 2 \pi \,.
\end{gather*}

Let $k,\ell \in \mathbb Z$, $|k|\leq \ell$, and let $ Y_{\ell}^k
(\varphi,\theta ) $ be the $L^2$ normalized spherical harmonics, the
joint eigenfunctions of the Laplacian in spherical coordinates
$\Delta_{{\mathbb S}^2}$ and the rotational generator
$\partial_{\varphi}$:
\begin{gather}
\label{eq:Ylk}
\begin{gathered}
- \Delta_{{\mathbb S}^2}  Y_\ell^k = 
- \left( \frac{ 1 } { \sin^2 \theta} \partial_\varphi^2 + 
 \frac{ 1 } { \sin \theta} \partial_\theta \left( \sin
    \theta \partial_\theta \right) \right) Y_{\ell}^k = 
l(l+1) Y_\ell^k \,, \ \ \ 
\frac 1 i \partial _\varphi  Y_\ell^k = k Y_\ell^k\,, \\ 
\int_0^{2\pi} \int_0^\pi Y_\ell^k ( \varphi, \theta ) 
 \overline{ Y_{\ell'}^{k'} ( \varphi, \theta ) }\sin \theta   d
  \theta  d \varphi = \delta_{\ell \ell' } \delta_{ k k   '} \,. 
\end{gathered}
\end{gather}

\noindent Applied to the sphere, our main result on weighted $L^{\infty}$ estimates reads
 \begin{theo}
\label{t:sph}
Let $ Y_\ell^k ( \varphi, \theta ) $, $ 0 \leq \varphi \leq 2 \pi$, $ 0
\leq \theta\leq \pi $, be the spherical harmonics defined above. Then
for $ \ell \geq 1 $, 
\begin{equation}
\label{eq:ts1}
|\sin^2 \theta  \cos \theta |^{1/6}  \, |  Y_\ell^k (
\varphi, \theta ) | \leq C \ell^{1/6} \,,
\end{equation}
where $ C $ is a universal constant.
\end{theo}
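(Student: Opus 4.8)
The plan is to reduce \eqref{eq:ts1} to a sharp pointwise bound for the $L^2$-normalized eigenfunctions of a one-dimensional semiclassical Schr\"odinger operator and then read that bound off from the WKB/Airy description of these eigenfunctions, using the fact that all geodesics of $\mathbb S^2$ are closed with the same length. Concretely, separating variables in \eqref{eq:Ylk} I write $Y_\ell^k(\varphi,\theta)=(2\pi)^{-1/2}e^{ik\varphi}f(\theta)$ with $f=f_{\ell,k}$ real and $\int_0^\pi f(\theta)^2\sin\theta\,d\theta=1$, so $|Y_\ell^k|=(2\pi)^{-1/2}|f|$, set $h:=(\ell+\tfrac12)^{-1}$ and $w(\theta):=(\sin\theta)^{1/2}f(\theta)$; the Liouville transformation turns the $\theta$-equation of \eqref{eq:Ylk} into
\[
-h^2w''+V(\theta)\,w=w,\qquad V(\theta):=\frac{(hk)^2-h^2/4}{\sin^2\theta},\qquad \int_0^\pi w(\theta)^2\,d\theta=1 .
\]
Since $|\sin^2\theta\cos\theta|^{1/6}|Y_\ell^k|=(2\pi)^{-1/2}(\sin\theta)^{-1/6}|\cos\theta|^{1/6}|w(\theta)|$ and $h^{-1/6}\asymp\ell^{1/6}$, the estimate \eqref{eq:ts1} is equivalent to $|w(\theta)|\le C\,h^{-1/6}|\tan\theta|^{1/6}$ on $(0,\pi)$, which by the symmetry $\theta\mapsto\pi-\theta$ of $V$ we may prove for $\theta\in(0,\tfrac\pi2]$.

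Put $\beta:=\bigl(\max(0,(hk)^2-h^2/4)\bigr)^{1/2}\in[0,1)$, so $V=\beta^2/\sin^2\theta$ (and $V\le 0$ when $k=0$): the classically allowed set $\{V<1\}$ is $(\theta_0,\pi-\theta_0)$ with $\theta_0=\arcsin\beta$, and at the turning point $|V'(\theta_0)|=2\beta^{-1}\sqrt{1-\beta^2}\asymp|\cot\theta_0|\asymp|\tan\theta_0|^{-1}$. The decisive structural fact is that
\[
T:=\int_{\theta_0}^{\pi-\theta_0}\frac{d\theta}{\sqrt{1-V(\theta)}}=\int_{-\sqrt{1-\beta^2}}^{\sqrt{1-\beta^2}}\frac{dc}{\sqrt{1-\beta^2-c^2}}=\pi
\]
is independent of $k$ (the substitution $c=\cos\theta$; this reflects the Zoll property of $\mathbb S^2$). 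With $T$ pinned, the standard Liouville--Green plus Airy analysis of the ODE above --- equivalently, Olver's uniform Airy-type asymptotics for the associated Legendre functions $P_\ell^k$ --- gives, with Airy scale $\delta_0:=(h^2/|V'(\theta_0)|)^{1/3}$,
\[
|w(\theta)|^2\lesssim\bigl(1-V(\theta)\bigr)^{-1/2}\ \text{ for }\operatorname{dist}(\theta,\theta_0)\gtrsim\delta_0,\qquad \sup_{|\theta-\theta_0|\lesssim\delta_0}|w|\lesssim h^{-1/6}|V'(\theta_0)|^{-1/6},
\]
and, on the forbidden interval $(0,\theta_0)$, the Agmon bound $|w(\theta)|\lesssim h^{-1/6}|V'(\theta_0)|^{-1/6}\exp\!\bigl(-h^{-1}\!\int_\theta^{\theta_0}\sqrt{V-1}\,\bigr)$. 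Each is $\le C h^{-1/6}|\tan\theta|^{1/6}$ by elementary estimates: in the turning-point layer $|V'(\theta_0)|^{-1/6}\asymp|\tan\theta_0|^{1/6}\asymp|\tan\theta|^{1/6}$; in the bulk, writing $1-V(\theta)=(\sin^2\theta-\beta^2)/\sin^2\theta$, one needs $\sin^2\theta-\beta^2\gtrsim h^{2/3}(\sin\theta)^{4/3}|\cos\theta|^{2/3}$, which follows from $\operatorname{dist}(\theta,\theta_0)\gtrsim\delta_0$; and in the forbidden region the exponential beats the growth of $|\tan\theta|^{-1/6}$ since $\int_\theta^{\theta_0}\sqrt{V-1}\gtrsim\beta\log(\theta_0/\theta)$ with $\beta\gtrsim h$ there.

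I expect the genuine difficulty to lie not in these inequalities but in making the WKB/Airy description uniform in $k$ across the whole range $|k|\le\ell$ with absolute constants, and especially in the two limiting regimes where the picture degenerates. When $|hk|\to1$, i.e.\ $m:=\ell-|k|$ is $O(\ell^{1/2})$ or smaller, the well collapses: rescaled near $\theta=\tfrac\pi2$ it becomes a harmonic oscillator whose $L^2$-normalized eigenfunctions have sup norm $\asymp h^{-1/4}(1+m)^{-1/12}$, which is exactly $\asymp h^{-1/6}|\tan\theta_0|^{1/6}$ at the relevant turning point --- so \eqref{eq:ts1} is sharp and attained there. When $k=O(1)$, the turning point $\theta_0$ merges with the coordinate singularity at the pole and one uses instead the Mehler--Heine/Bessel description $w(\theta)\sim h^{-1/2}\theta^{1/2}J_{|k|}(\theta/h)$ on $\{\theta\lesssim1\}$; the bounds $|J_{|k|}(z)|\lesssim\min(z^{|k|},z^{-1/2})$ again give \eqref{eq:ts1}, attained near $\theta\sim h$. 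In the intermediate range the uniformity is cleanest to organize by combining $\|w\|_{L^2(0,\pi)}=1$ with a soft local elliptic estimate $\sup_I|w|^2\lesssim\bigl(|I|^{-1}+h^{-1}\sup_I|V-1|^{1/2}\bigr)\|w\|_{L^2(2I)}^2$ for solutions of the ODE, applied on intervals $I$ of length $\sim\max(\delta_0,\operatorname{dist}(\theta,\theta_0))$ near the turning point and $\sim h(1-V(\theta))^{-1/2}$ in the bulk, together with the WKB statement that the $L^2$ mass is spread over the whole allowed region.
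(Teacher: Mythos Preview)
Your outline is correct and isolates exactly the three regimes the paper treats --- a regular turning-point regime, the elliptic equator ($|hk|\to 1$), and the pole ($k=O(1)$) --- but the route is genuinely different. You work entirely at the one-dimensional ODE level: after the Liouville reduction to $-h^2w''+Vw=w$ you invoke Olver-type uniform Airy asymptotics for the associated Legendre functions in the bulk, a harmonic-oscillator rescaling at the equator, and the Mehler--Heine/Bessel approximation at the pole, with the constant half-period $T=\pi$ (the Zoll property of $\mathbb S^2$) pinning the WKB normalization uniformly in $k$. The paper instead deduces Theorem~\ref{t:sph} as the spherical case of a result (Theorem~\ref{t:1}, Corollary~\ref{cor2}) valid for any strictly convex surface of revolution, proved via semiclassical microlocal analysis: Lagrangian distributions and van der Corput's lemma (Proposition~\ref{l:3rd-derivative-estimate}) give the $h^{-1/6}$ turning-point bound, a harmonic-oscillator rescaling $r\mapsto \beta^{1/2}\tilde r$ handles the equator, and a two-dimensional oscillatory-integral representation with a simultaneous rescaling $(r,\alpha)=(s\tilde r,s\tilde\alpha)$ handles the pole. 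Your approach is more elementary and leans directly on classical special-function literature; the paper's buys generality to arbitrary convex surfaces and a phase-space explanation for why the weight $|\sin^2\theta\cos\theta|^{1/6}$ is forced. The uniformity across the transition zones that you flag as the main difficulty is handled in the paper not by uniform asymptotic expansions but by applying the rescalings to the quasimode inequality $\|(P-\lambda)u\|_{L^2}=\mathcal O(h)$, which is stable under the dilations and feeds into Proposition~\ref{l:well-est}; this replaces the need to track constants in Airy/Hermite/Bessel matchings.
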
  

The power $ \ell^{1/6 } $ in \eqref{eq:ts1} can be explained as
follows. Taking the Fourier expansion in $ \varphi$ reduces the first
differential equation in~\eqref{eq:Ylk} to
\[ \left(  \frac{1 } { \sin \theta} h \partial_\theta \left( \sin
    \theta h \partial_\theta \right)  + \frac{ \alpha^2 } { \sin^2
    \theta} - 1 \right)  u  =  0,  \ \ 
h = (\ell ( \ell + 1 ) )^{-1/2} , \ \ \alpha = h^2 k^2 , \] where $ k
\in {\mathbb Z} $ is an eigenvalue of $ \frac 1 i \partial_\varphi $
and $Y_\ell^k=u(\theta)e^{ik\varphi}$.  When $\alpha$ is such that $ 0
< \epsilon < |\alpha | < 1 -\epsilon $, this equation has two turning
points at $ \sin \theta = \pm \alpha$.  Physically this corresponds to
caustic formation: the focusing at turning points increases the
intensity of the wave function, that is, it increases its $ L^\infty $
norm by a factor of $ h^{-1/6} \sim \ell^{1/6} $~-- see Proposition
\ref{l:well-eig} below.%
\footnote{The $h^{-1/6}$ factor can be seen on the model example
of the equation $(h^2 D_x^2+x)v=0$, with a turning point at $x=0$ and
locally $L^2$-normalized solution $h^{-1/6}Ai(h^{-2/3} x)$, where $Ai$
is the Airy function. The $h^{-1/6}$ normalization here follows from
the asymptotic behavior of $Ai(y)$ as $y\to -\infty$.}
Since the $h^{-1/6}$ loss happens all over the sphere, such growth in
the $ L^\infty $ norm cannot be eliminated by a weight function. In
order to get a uniform bound on the entire sphere in \eqref{eq:ts1},
we choose a weight function vanishing at the pole and the equator.  A
more detailed explanation of the weights and the principles of
semiclassical analysis on which the analysis is based is given at the
end of Section~\ref{wee}.

\subsection{Motivation}

Consider functions on the sphere which are \emph{bandlimited} and \emph{sparse}:
\begin{equation}
\label{spheresparse}
f(\varphi,\theta) = \sum_{\ell =0}^{\sqrt{N}-1} \sum_{k=-\ell}^{\ell} c_{\ell,k} Y_{\ell}^k (\varphi, \theta); \quad \textrm{at most $s < N$ of the $c_{\ell,k}$ are nonzero.}
\end{equation}
Functions well-approximated as bandlimited and sparse arise in
applications ranging from models for protein structure \cite{protein}
to cosmic microwave background (CMB) data \cite{cmb}.  In \cite{RaW},
Rauhut and Ward showed that such functions can be efficiently
reconstructed from far less information than their ambient dimension
suggests; in particular, they show that for certain sets of sampling
points $(\varphi_i, \theta_i) \in \mathbb{S}^2, \hspace{1mm} j \in
[m],$ of size
\begin{equation}
\label{raw:m}
m \gtrsim s N^{1/4} \log^{4} N,
\end{equation}
any function $f$ of the form \eqref{spheresparse} can be reconstructed
from its values $f(\varphi_i, \theta_i)$ as the function of this
bandwidth whose coefficient vector $c = c_{\ell, k}$ has minimal
$\ell_1$-norm $\| c \|_1 = \sum_{\ell=0}^{\sqrt{N}-1}
\sum_{k=-\ell}^{\ell} | c_{\ell,k} |$.  It is shown that $m$ angular
coordinates $(\theta_i, \varphi_i)$ where $m$ satisfies \eqref{raw:m},
drawn independently from the measure $d\theta d\varphi$ on $[0,\pi]
\times [0,2\pi]$, will almost always be a set of sampling points for
which this holds.

In Section $2$ we show how Theorem \ref{t:sph} improves on the results
in \cite{RaW}, strengthening the required number of sampling points
for recovering functions of the form \eqref{spheresparse} to
\begin{equation}
\label{bdwz:m}
m \gtrsim s N^{1/6} \log^4(N)
\end{equation}
by drawing angular coordinates $(\theta_i, \varphi_i)$ independently
from the measure $|\tan(\theta) |^{1/3} d\theta d\varphi$ on $[0,\pi]
\times [0,2\pi]$.  The specific statement is given in Corollary
\ref{cor3}.  As seen in Figure~\ref{fig:1a}(c), this measure generates
higher sampling density around the poles and equator; the measure
$d\theta d\varphi$, illustrated in Figure \ref{fig:1a}(b) and on which
the analysis of \cite{RaW} is based, only generates higher sampling
density at the poles.

It remains open whether there exists a sampling strategy for which the
factor of $N^{1/6}$ in \eqref{bdwz:m} can be provably eliminated. As
the discussion following Theorem 1 indicates, such a result cannot be
done by using weight functions alone.

\subsection{Sparse recovery for arbitrary surfaces of revolution.}

The weighted $L^{\infty}$ estimates given in Corollary~\ref{cor2} of
Section~\ref{wee} provide sampling strategies more broadly for
recovering sparse eigenfunction expansions on any strictly convex
surface of revolution.  In particular, assume that $M$ is a strictly
convex surface of revolution parametrized by $(r,\varphi)\in
[r_-,r_+]\times [0,2\pi)$.  The induced Riemannian metric on $ M $ is
given by
\[   g = dr^2 + a ( r)^2 d \varphi^2 ,  \ \ a ( r ) = ( r_+ - r) ( r -
r_-) b ( r) , \ \ b ( r_\pm) > 0 ,  \ \ b' ( r_\pm) = 0 ,  \]
where $ a ( r ) $ has a unique nondegerate local maximum at $ r= r_0
$, $ r_- < r_0 < r_+ $: $ a' ( r ) \neq 0 $, $ r \neq r_0 $, $ a'' (
r_0 ) < 0 $.  In particular, using Corollary~\ref{cor2} in
Section~\ref{wee}, we will prove the following.
\begin{prop}
\label{theo:sphere}
Suppose that $M$ is a strictly convex surface of revolution and
consider $\psi_j$, the ($L^2$-normalized) joint eigenfunctions of the
Laplace-Beltrami operator on $M$ and the rotational generator $
\frac{1}{ i}\partial_\varphi$. 

\noindent Let $m, s$, and $N$ be given integers satisfying
 \begin{equation}
 \label{eq:msphere}
m \gtrsim s N^{1/6}\log^4(N),
 \end{equation}
and suppose that $m$ coordinates $(\varphi_i, r_i)$ are drawn
independently according to the measure
$$\left( \frac{a(r)}{|r - r_0|} \right)^{1/3} dr  d \varphi$$
on $[r_{-}, r_{+}] \times [0,2\pi)$. Consider the associated $m \times
N$ sampling matrix $A$ with entries
$$
A_{i,j} = a(r_i)^{-2/3} | r_i - r_0 |^{-1/3} \psi_j (\varphi_{i}, r_{i}).
$$
With probability exceeding $1-N^{-\log^3(s)}$ the following holds for
all $s$-sparse functions
\[ 
f(\varphi, r) = \sum_{j=1}^{N} c_{j} \psi_j(\varphi, r), \quad \quad | c | \leq s:\]
Suppose that sample values $y_i =f(\varphi_i, r_i)$ are known, and let
\begin{equation}
\label{ell1sphere'}
{c}^{\sharp} = \arg \min \| z \|_1 \hspace{3mm} \textrm{subject to} \hspace{3mm}  A z = y.
\end{equation}
Then $c = c^{\#}.$  That is, $f$ is recovered exactly via \eqref{ell1sphere'}.
\end{prop}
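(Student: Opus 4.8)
The plan is to derive Proposition~\ref{theo:sphere} by reducing it to the standard theory of sparse recovery via $\ell_1$-minimization for bounded orthonormal systems, with the weighted eigenfunction estimate of Corollary~\ref{cor2} playing the role of the uniform boundedness hypothesis. First I would observe that the functions $\{\psi_j\}_{j=1}^N$ are orthonormal in $L^2(M, dV_g)$, where $dV_g = a(r)\,dr\,d\varphi$ is the Riemannian volume. The sampling measure is $d\mu = \left(a(r)/|r-r_0|\right)^{1/3} dr\,d\varphi$; I need to check that $d\mu$ is a probability measure on $[r_-,r_+]\times[0,2\pi)$ after normalization (the normalizing constant depends only on $M$ and is absorbed into the implicit constants), and that the reweighted functions
\[
\widetilde\psi_j(\varphi,r) := \left(\frac{d V_g}{d\mu}\right)^{1/2}\psi_j(\varphi,r) = a(r)^{1/3}|r-r_0|^{1/6}\cdot(\text{const})\cdot\psi_j(\varphi,r)
\]
form an orthonormal system in $L^2(d\mu)$. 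The point of the exponents $a(r)^{-2/3}|r-r_0|^{-1/3}$ in the definition of $A_{i,j}$ is precisely that $A_{i,j} = (\text{const})\,\widetilde\psi_j(\varphi_i,r_i)$ up to the normalization constant, so that $A$ is exactly the rescaled random sampling matrix associated to the orthonormal system $\{\widetilde\psi_j\}$ and the probability measure $d\mu$.

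The second step is to bound the system $\{\widetilde\psi_j\}$ in $L^\infty$. This is where Corollary~\ref{cor2} enters: the weighted estimate there should give, for the $j$-th eigenfunction with eigenvalue $\sim\lambda_j^2$ (so that the semiclassical parameter is $h_j\sim\lambda_j^{-1}$), a bound of the shape
\[
\left(\frac{a(r)}{|r-r_0|}\right)^{1/3}|\psi_j(\varphi,r)|^2 \lesssim \lambda_j^{1/3},
\]
i.e. $|\widetilde\psi_j(\varphi,r)|\lesssim \lambda_j^{1/6}\lesssim N^{1/12}$ uniformly, once we restrict to the bandlimited range where $\lambda_j \lesssim \sqrt{N}$ (this is exactly the analogue of Theorem~\ref{t:sph} for a general strictly convex surface of revolution, with the weight $|\sin^2\theta\cos\theta|^{1/6}$ on the sphere corresponding to $(a(r)/|r-r_0|)^{1/6}$ here). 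Hence $\|\widetilde\psi_j\|_\infty \le K$ with $K \asymp N^{1/12}$, so $K^2 \asymp N^{1/6}$, and $\{\widetilde\psi_j\}_{j=1}^N$ is a bounded orthonormal system with constant $K$.

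The third step is to invoke the standard compressed-sensing theorem for bounded orthonormal systems (the Rauhut--Ward / Candès--Tao type result used in \cite{RaW}): if $m$ points are drawn i.i.d.\ from the orthogonalization measure and
\[
m \gtrsim K^2\, s\, \log^3(s)\log(N) \asymp s\,N^{1/6}\log^4(N),
\]
then with probability at least $1 - N^{-\log^3 s}$ the rescaled sampling matrix $\frac{1}{\sqrt m}A$ satisfies the restricted isometry property of order $2s$ with constant $\delta_{2s} < 1/3$ (or whatever threshold the cited recovery theorem demands), which in turn guarantees that every $s$-sparse $c$ is the unique minimizer of $\|z\|_1$ subject to $Az = y$ with $y = Ac$. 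Matching the sample values: since $y_i = f(\varphi_i,r_i) = \sum_j c_j\psi_j(\varphi_i,r_i)$ and $A_{i,j}$ carries exactly the weight $a(r_i)^{-2/3}|r_i-r_0|^{-1/3}$, we have $(Ac)_i = a(r_i)^{-2/3}|r_i-r_0|^{-1/3} y_i$, so the linear system in \eqref{ell1sphere'} is equivalent to interpolating the (reweighted) samples; exact recovery $c^\sharp = c$ follows. I expect the main obstacle to be purely bookkeeping rather than conceptual: carefully tracking the weight exponents through the change of measure so that $A$ really is the RIP matrix of a \emph{bounded} orthonormal system with constant $N^{1/12}$, and confirming that Corollary~\ref{cor2} is stated with exactly the weight and eigenvalue power needed — in particular that the estimate is uniform over all $j$ with $\lambda_j\lesssim\sqrt N$ and does not degenerate near $r=r_0$ or $r=r_\pm$, which is exactly what the vanishing weight $(a(r)/|r-r_0|)^{1/3}$ is designed to handle. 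The probabilistic and $\ell_1$-minimization parts are then entirely off-the-shelf.
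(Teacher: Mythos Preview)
Your proposal is correct and is essentially the paper's own argument: the paper derives Proposition~\ref{theo:sphere} by feeding the weighted $L^\infty$ bound of Corollary~\ref{cor2} into Proposition~\ref{thm:BOS2} (the preconditioned bounded orthonormal system result), exactly as you outline, with $\omega^{-1/2}\psi_j$ bounded by $K\simeq N^{1/12}$ via Weyl's law on a surface. Your caveat about tracking the weight exponents is apt---the paper itself is not fully consistent between the exponents in the statement of $A_{i,j}$ in Proposition~\ref{theo:sphere} and those in Corollary~\ref{cor3}, so the bookkeeping you flag is genuinely the only place where care is required.
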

\noindent Applying Proposition~\ref{theo:sphere} to the sphere, we get in particular
\begin{cor}
\label{cor3}
Given $m \gtrsim s N^{1/6} \log^{4} N$ sampling points on the sphere
with angular coordinates $(\theta_i, \varphi_i)$ drawn independently
from the measure $| \tan^{1/3}(\theta) | d\theta d\varphi$ on $[0,\pi]
\times [0,2\pi]$ , with high probability any $s$-sparse function of
the form \eqref{spheresparse} can be recovered exactly as the
minimizing function of the convex program \eqref{ell1sphere'} with
$A_{i,j} = | \sin^2{\theta_i}\cos{\theta_i} |^{1/3} \psi_j(\varphi_i,
\theta_i)$.
\end{cor}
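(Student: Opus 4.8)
The plan is to recognize $\mathbb S^2$ as a strictly convex surface of revolution in the sense of Section~\ref{wee} and then read off the statement from Proposition~\ref{theo:sphere}; essentially all the work is in matching the parametrization, the eigenbasis, and the weights. First I would fix the parametrization: take $r=\theta\in[0,\pi]$, so that the round metric is $g=d\theta^2+\sin^2\theta\,d\varphi^2$, whence $r_-=0$, $r_+=\pi$ and $a(r)=\sin\theta$. The structural hypotheses are then immediate: $a(r)=\sin\theta$ vanishes simply at the two poles, $\sin\theta/(\theta(\pi-\theta))$ extends smoothly and positively to $[0,\pi]$ (with value $1/\pi$ at the endpoints), and $a'(\theta)=\cos\theta$ vanishes only at $r_0=\pi/2$, where $a''(\pi/2)=-1<0$, so $a$ has a unique nondegenerate interior maximum. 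The $L^2$-normalized joint eigenfunctions $\psi_j$ of $\Delta_{\mathbb S^2}$ and $\tfrac1i\partial_\varphi$ are exactly the spherical harmonics $Y_\ell^k$ of \eqref{eq:Ylk}, normalized with respect to $\sin\theta\,d\theta\,d\varphi=a(r)\,dr\,d\varphi$, that is, with respect to the Riemannian volume, which is the normalization in Proposition~\ref{theo:sphere}.

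Next I would match the discrete parameters and specialize the weights. Ordering the $\psi_j$ by increasing eigenvalue $\ell(\ell+1)$, the first $N$ of them span precisely the spherical harmonics with $\ell\le\sqrt N-1$, since $\sum_{\ell=0}^{\sqrt N-1}(2\ell+1)=N$; hence a function of the form \eqref{spheresparse} is exactly an $s$-sparse combination of the first $N$ eigenfunctions, and the two sparsity models coincide. On $[0,\pi]$ one has $|r-r_0|=|\theta-\pi/2|\asymp|a'(\theta)|=|\cos\theta|$, the comparison constants coming from the nondegeneracy of the maximum at $r_0$; consequently $a(r)/|r-r_0|\asymp\sin\theta/|\cos\theta|=|\tan\theta|$, and the quantity $\sin^2\theta\cos\theta$ appearing in the weight of Theorem~\ref{t:sph} is exactly $a(r)^2a'(r)$. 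Substituting $a(r)=\sin\theta$ into the sampling measure $(a(r)/|r-r_0|)^{1/3}\,dr\,d\varphi$ and into the preconditioning weight of Proposition~\ref{theo:sphere} (equivalently Corollary~\ref{cor2}) then produces, up to a bounded density, the measure $|\tan\theta|^{1/3}\,d\theta\,d\varphi$ on $[0,\pi]\times[0,2\pi]$ together with the sampling matrix $A_{i,j}=|\sin^2\theta_i\cos\theta_i|^{1/3}\psi_j(\varphi_i,\theta_i)$ of the statement and the $\ell_1$ program \eqref{ell1sphere'}. Finally, since $\ell\le\sqrt N$, the weighted bound of Theorem~\ref{t:sph} yields a boundedness constant of size $O(\ell^{1/6})=O(N^{1/12})$ for the reweighted eigenfunction system, whose square is $O(N^{1/6})$; hence the sample count $m\gtrsim sN^{1/6}\log^4 N$ of \eqref{eq:msphere} and the associated high-probability recovery guarantee transfer verbatim from Proposition~\ref{theo:sphere}.

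The only points that need genuine attention are (i) confirming that $\mathbb S^2$, in the parametrization above, really does satisfy the hypotheses imposed in Section~\ref{wee}, and (ii) the fractional-power bookkeeping in the reweighting: one must check that the rescaling of $\{Y_\ell^k\}$ dictated by Theorem~\ref{t:sph}/Corollary~\ref{cor2} produces exactly the sampling density $|\tan\theta|^{1/3}$ — in particular that this density is finite on $[0,\pi]$, the singularity at the equator being only of order $|\cos\theta|^{-1/3}$, and the behavior at the poles being bounded — and that the resulting system is a bounded orthonormal system with boundedness constant $\lesssim\ell^{1/6}$, so that the Rauhut–Ward sampling theorem applies with the exponent $N^{1/6}$. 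Given these checks, which are routine once Theorem~\ref{t:sph} and Proposition~\ref{theo:sphere} are in hand, Corollary~\ref{cor3} is a direct specialization, with the comparison $|r-r_0|\asymp|\cos\theta|$ affecting only the implied constant in $m\gtrsim sN^{1/6}\log^4 N$.
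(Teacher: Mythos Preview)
Your proposal is correct and follows exactly the approach the paper takes: Corollary~\ref{cor3} is presented there as an immediate specialization of Proposition~\ref{theo:sphere} to the sphere, with no separate proof given. Your explicit verification of the structural hypotheses for $a(\theta)=\sin\theta$, the identification of the joint eigenbasis with the spherical harmonics, the dimension count $\sum_{\ell=0}^{\sqrt N-1}(2\ell+1)=N$, and the matching of the sampling measure $(a(r)/|r-r_0|)^{1/3}\,dr\,d\varphi$ with $|\tan\theta|^{1/3}\,d\theta\,d\varphi$ via the comparison $|r-r_0|=|\theta-\pi/2|\asymp|\cos\theta|$ simply fill in the details the paper leaves implicit.
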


\begin{figure}
{\includegraphics[width=5cm, height=5cm]{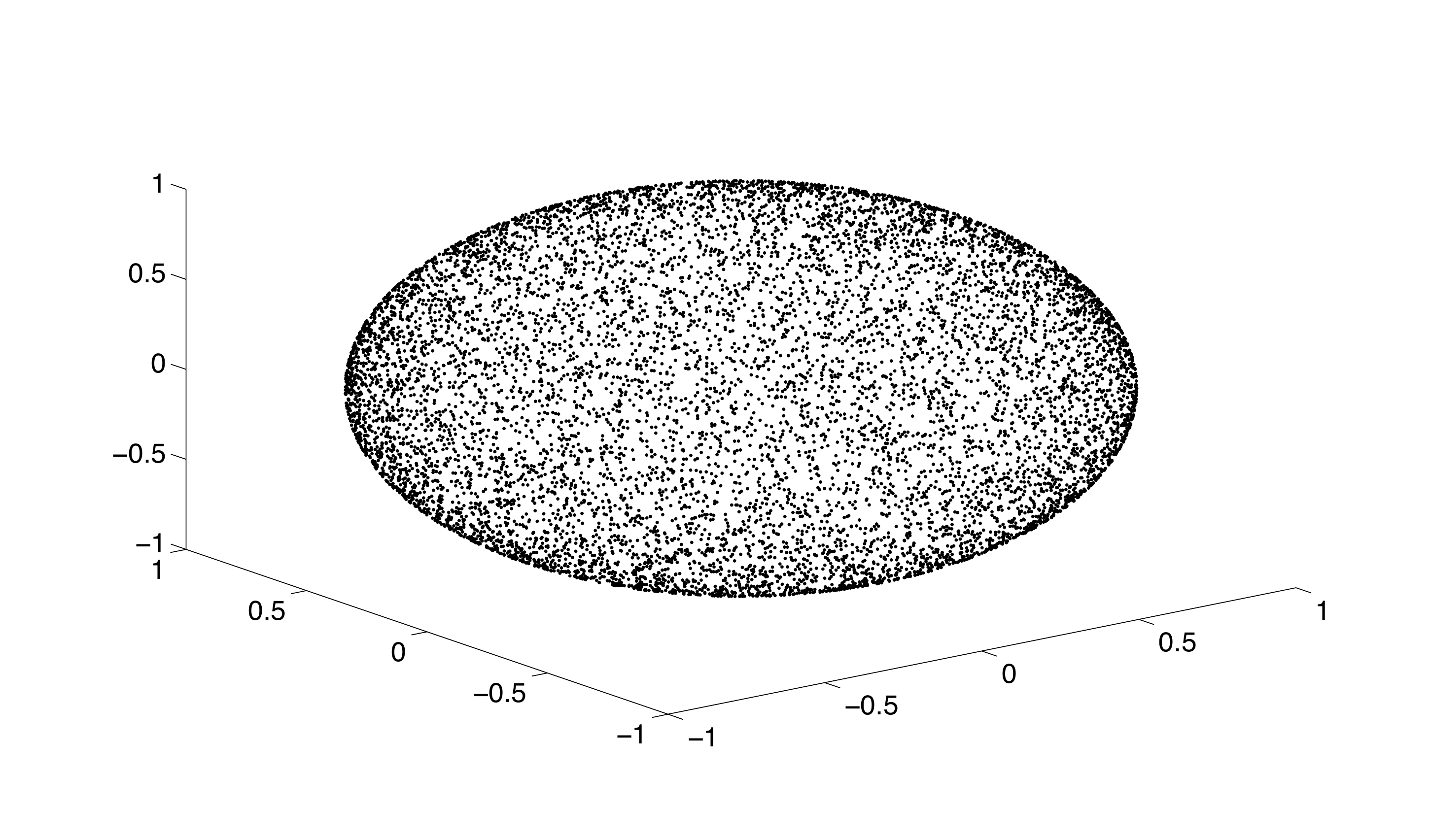}}
{\includegraphics[width=5cm, height=5cm]{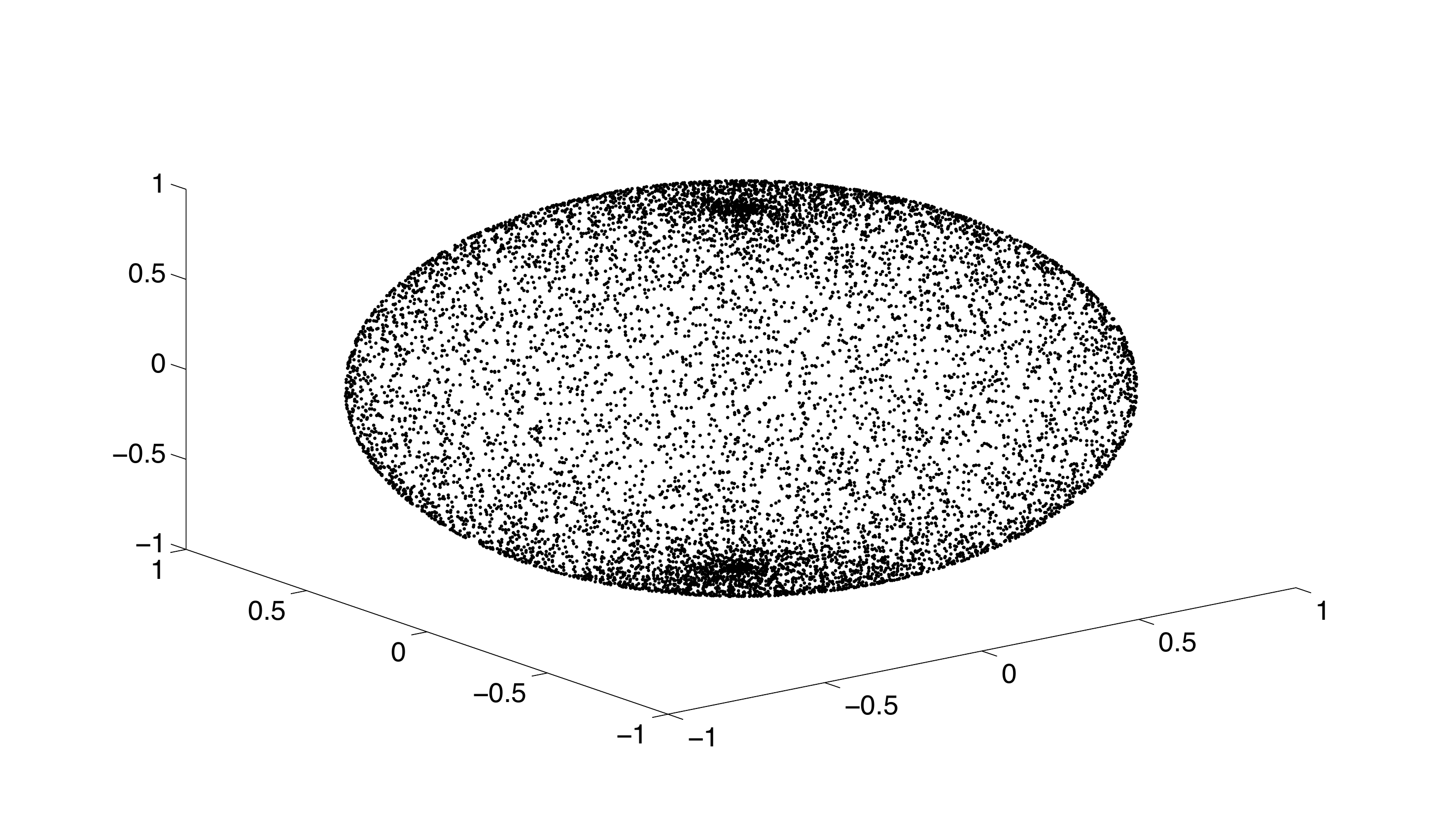}}
{\includegraphics[width=5cm, height=5cm]{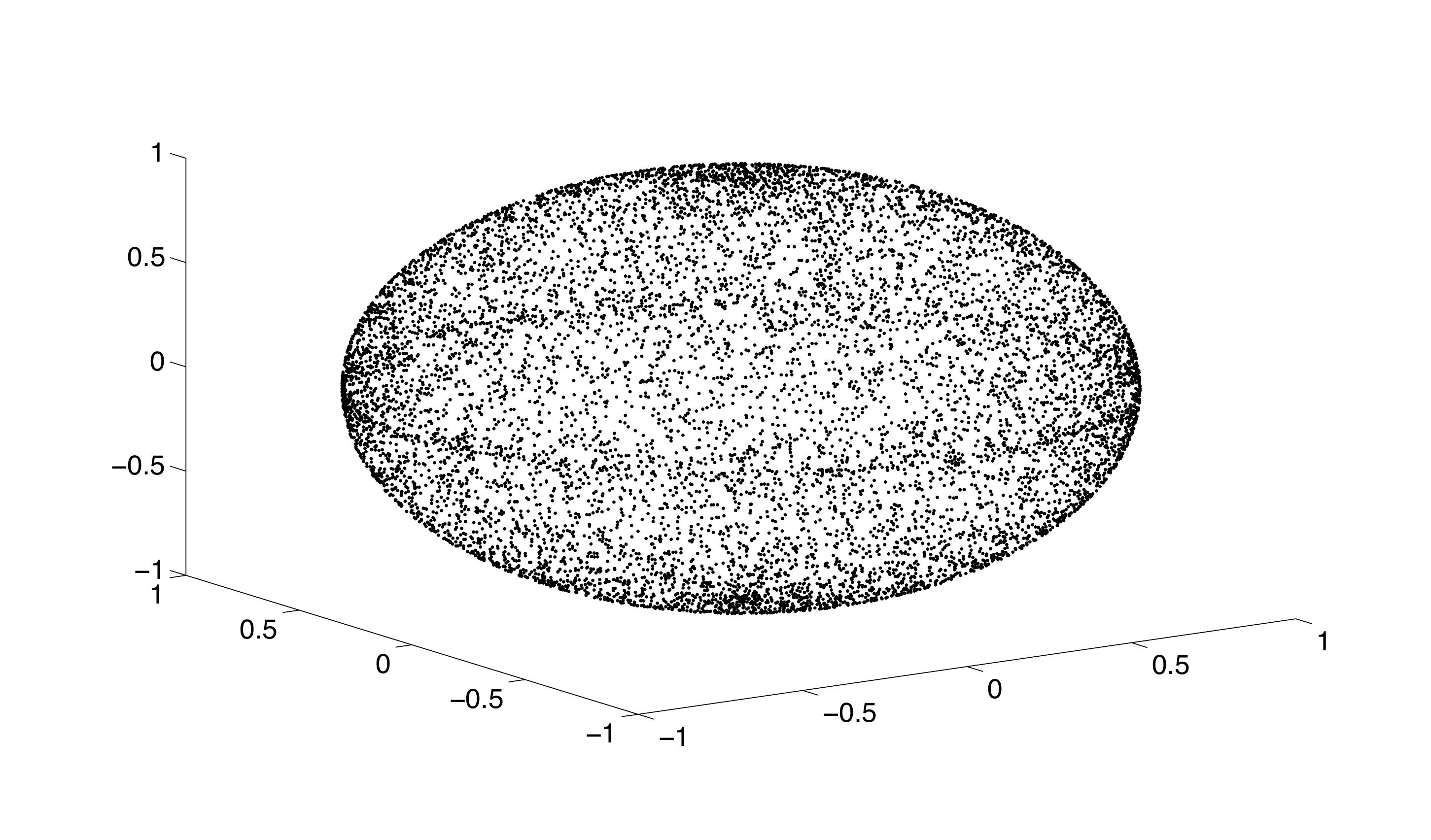}}
\hbox to\hsize{\hskip1in (a) \hss (b) \hss (c) \hskip1in}
\caption{$m=10,000$ independent  draws from the spherical measures 
(a) $\sin(\theta)\,d\theta  d\varphi$, (b) $d\theta 
d\varphi$, and (c) $| \tan(\theta) |^{1/3}\, d\theta  d\varphi$.
}\label{fig:1a}
\end{figure}

\subsection{Numerical experiments}
In this section we test the numerical relevance of Corollary
\ref{cor3}, comparing the rate of correct reconstruction of sparse
bandlimited functions on the sphere \eqref{spheresparse} via the
$\ell_1$-minimizer \eqref{ell1sphere'} when $m$ sampling points
$(\theta_i, \varphi_i)$ are drawn i.i.d. from the measures (a)
$\sin(\theta)\,d\theta d\varphi$, (b) $d\theta d\varphi$, and (c) $|
\tan(\theta) |^{1/3}\, d\theta d\varphi$.  More specifically, for each
choice of sampling measure, we vary a number of sampling points $m$
between $1$ and $N$, and vary a sparsity level $s$ between $1$ and
$m$.  For each choice of $m$ and $s$, we generate $50$ $s$-sparse
bandlimited functions by repeatedly choosing a support of $[N]$ of
size $s$ at random, and prescribing to the chosen support
i.i.d. Gaussian coefficients.

From left to right, the phase diagrams in Figure \ref{fig:1}
correspond to sampling measures (a) $\sin(\theta) d\theta d\varphi$,
(b) $d\theta d\varphi$, and (c) $| \tan(\theta) |^{1/3} d\theta
d\varphi$.  White indicates complete recovery, and black indicates no
recovery whatsoever.  It is clear that the sampling strategies $(b)$
or $(c)$ give better results than $(a)$.  Diagrams (b) and (c) both
show a sharp transition between complete recovery and no recovery
whatsoever as the ratio $s/m$ increases as a function of $m/N$.
However, the region of phase space corresponding to complete recovery
is noticeably larger in (3) when $m/N$ is large.  Note that when
$m=N$, all three sampling schemes should give perfect reconstruction
as the system of equations $y = A c$ in the minimization problem
\eqref{ell1sphere'} has a unique solution with probability $1$.
However, plots (a) and (b) show zero reconstruction around this point,
an artifact of round-off error due to the ill-conditioning of the
sampling matrix $A$.

\begin{figure}

{\includegraphics[width=5cm, height=5cm]{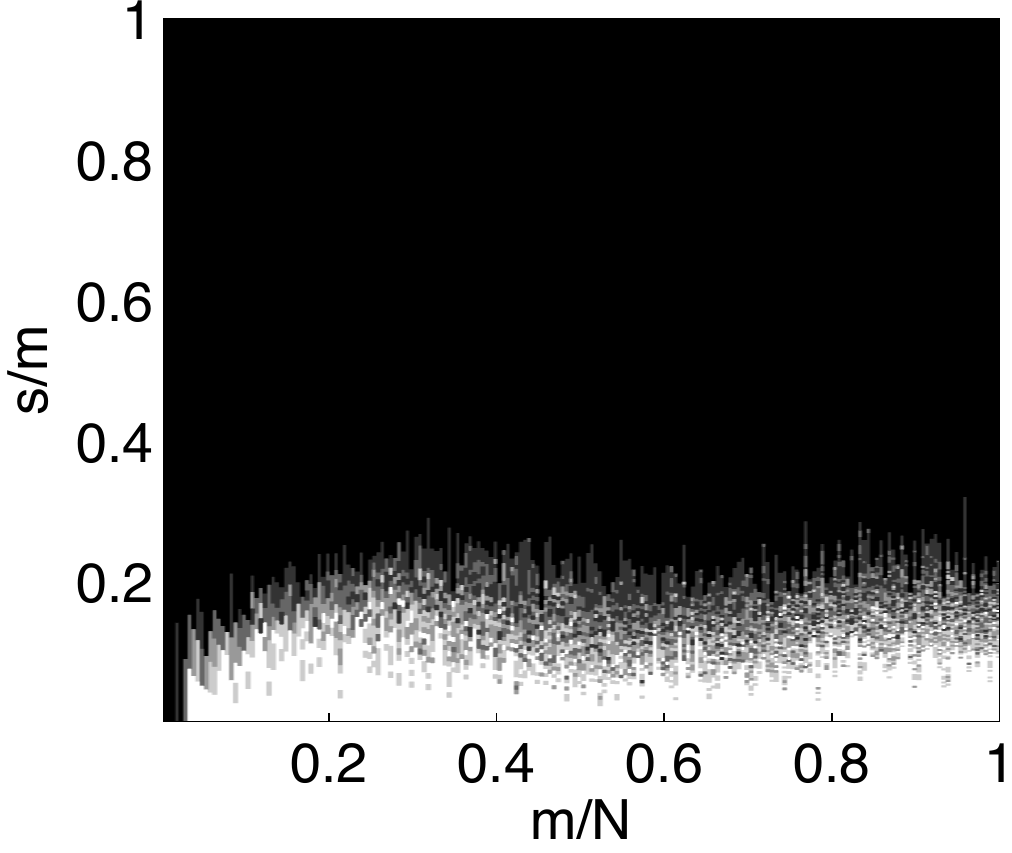}}
\includegraphics[width=5cm, height=5cm]{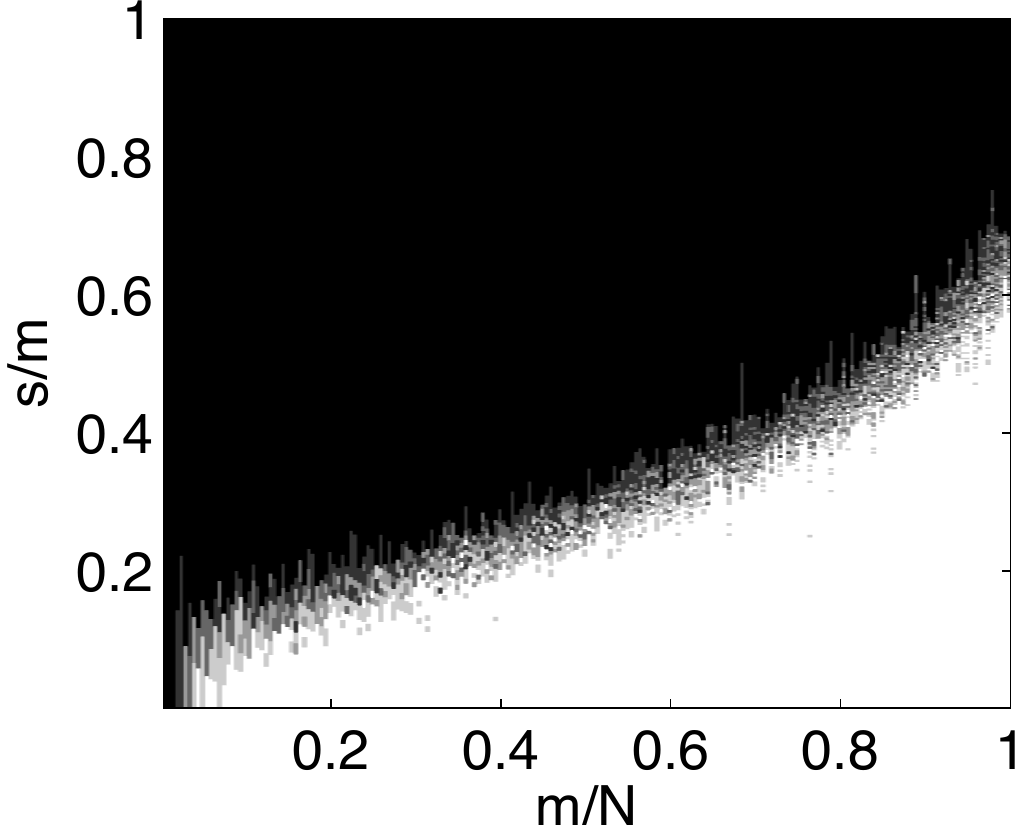}
\includegraphics[width=5cm, height=5cm]{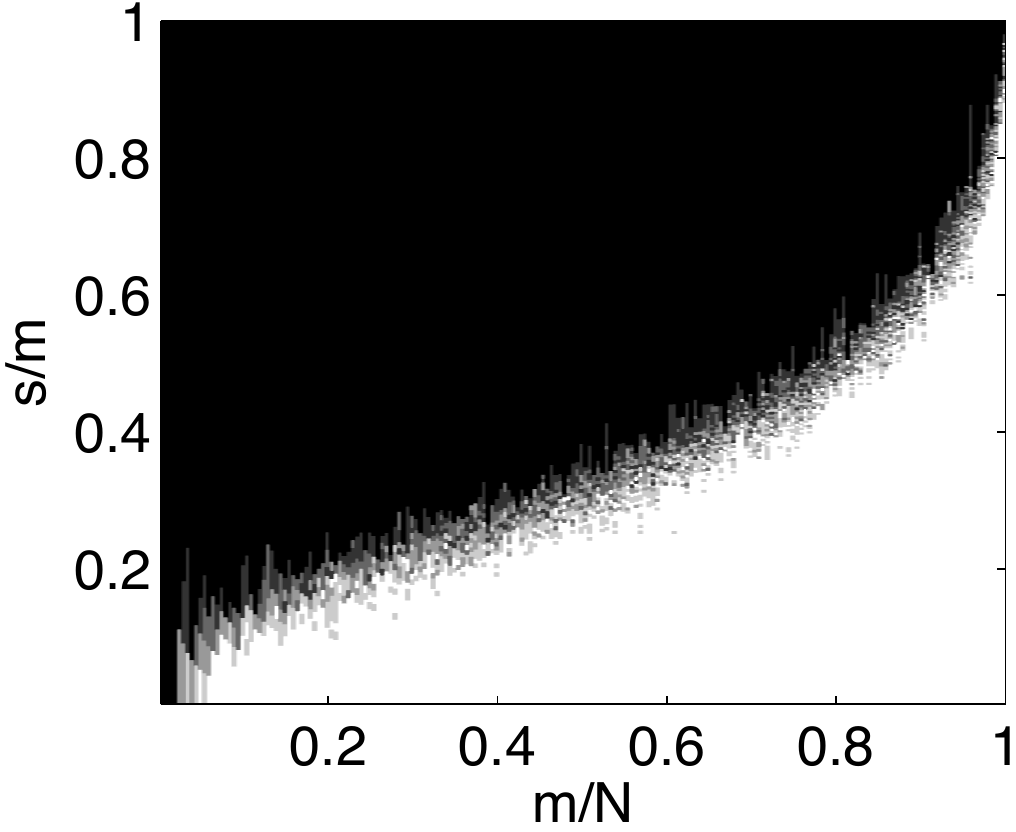}
\hbox to\hsize{\hskip 1.2in (a) \hss (b) \hss (c) \hskip.9in}
\caption{Phase diagrams illustrating transition between uniform
  recovery (white) to no recovery whatsoever (black)
  of spherical harmonic expansions $f(\varphi, \theta) =
  \sum_{\ell=0}^{19} \sum_{k=-\ell}^{\ell} c_{\ell, k}
  Y_{\ell}^k(\varphi, \theta)$ of sparsity level $| c | \leq s$ from $m$ samples $f(\varphi_i, \theta_i)$.  In (a),
 sampling points are drawn from the
  volume measure $\sin\theta\,d\theta d\varphi$.  In (b) the $m$ sampling points are drawn from
  $d\theta d \varphi$, and in (c) the sampling points are drawn from $| \tan(\theta)|^{1/3}\, d\theta
  d\varphi$.}
  \label{fig:1}
\end{figure}

\noindent {\bf Organization of the paper.}
In Section \ref{app} we review the relationship between sparse
recovery techniques on manifolds and weighted $ L^\infty $ bounds on
the associated eigenfunctions.  We then show how the main results of
this paper strengthen and generalize existing sparse recovery bounds.
The generalization of Theorem \ref{t:sph} to arbitrary convex surfaces
of revolution is given in Theorem \ref{t:1} of Section \ref{wee},
while Section \ref{prel} provides a detailed account of preliminaries
from semiclassical analysis needed for the proof which is presented in
Section~\ref{proo}.

\medskip

\noindent {\bf Notation.} 
In the paper $ C $ denotes a constant, independent of asymptotic
parameters, but changing depending on the context.  We use the usual $
{\mathcal O} $ notation with subscripts to indicate that the
associated constant might depend on the variable in the subscript, for
instance $ f = {\mathcal O}_x ( g ) $ means that $ f ( x, y ) \leq C (
x) g ( y ) $ for some $ C ( x ) $ depending on $ x $. We follow the
basic notational convention listed in \cite[Appendix A]{e-z}.
Consequently the above notation should not be confused with $ u =
{\mathcal O}_V ( g) $ for $ V$ a Hilbert space; the latter means that
$ \| u \|_V \leq C g $.  The notation $ f \lesssim g $ means that
there exists $ C$ such that $ f \leq C g $.  Finally, we use the
shorthand $[N] = \{1,2, ... ,N\}$. For a vector $x \in \mathbb{C}^N$
or $x \in \mathbb{R}^N$, we indicate the size of the support of by $|
x | = \{ \# j: | x_j | > 0 \}$.

\section{Compressed sensing and weighted $L^{\infty}$ estimates}
\label{app}
Suppose we have a finite system of functions $\{ \psi_j, j \in [N] \}$
on a compact manifold $M$.  Suppose we also have a function $f: {M}
\rightarrow \mathbb{C}$ which is $s$-sparse with respect to this
function system,
\begin{equation}
\label{f:sparse}
f  = \sum_{j=1}^N c_j \psi_j, \ \ \ \ | c | \leq s < N.
 \end{equation}
The area of compressed sensing \cite{crt} is concerned with the
following questions.  For a given system $\{ \psi_j \}$ and $s$-sparse
function $f$ of the form \eqref{f:sparse}, how many samples $f(x_i)$
where $x_i \in M$ do we need to uniquely identify $f$? Is it possible
moreover to efficiently and robustly reconstruct such a function from
these samples?  That is, to distinguish an arbitrary linear
combination of $N$ known functions $\psi_j$ we would clearly need $N$
samples.  But if we know a priori that $f$ is $s$-sparse, and if the
locations of the $s$ nonzero coefficients $c_j$ are known, then we
would need only $s$ samples.  When the locations of the $s$
coefficients are not known, $2s$ samples still suffice in certain
situations.  Namely, consider the matrix $\Psi \in
\mathbb{C}^{m \times N}$ with entries $\Psi_{i,j} = \psi_j(x_i)$, and
observe that
\begin{equation}
\label{yfromf}
y = \big( f(x_1), f(x_2), \dots, f(x_m) \big)^t = \Psi c.
\end{equation}
Each $s$-sparse function $f$ has a distinct image $y = \Psi c$ if
every sub-matrix of the $m \times N$ matrix $\Psi$ consisting of at
most $2s$ columns is non-singular, and this is true for many $m \times
N$ matrices having only $m = 2s$ rows (consider matrices having
i.i.d. Gaussian entries, for example.)  Subject to this condition, one
could solve for the unique $s$-sparse solution to $y = \Psi c$ by
searching over all $s$-sparse vectors $c$.  However, in general this
is an NP-hard problem.  As it turns out, polynomial-time recovery of
sparse solutions is possible if all $2s$-column sub-matrices of $\Psi$
are not only nonsingular, but well-conditioned - a property that can
only hold if $\Psi$ has at least $m \gtrsim s \log(N)$ rows
\cite{crt}. In the compressed sensing literature, a matrix $\Psi \in
\mathbb{C}^{m \times N}$ is said to have the \emph{restricted isometry
property} of order $2s$ if, for a fixed parameter $\delta < 1$,
\begin{equation}
\label{rip}
(1-\delta) \| u \|_2 \leq \| \Psi u \|_2 \leq (1 + \delta) \| u \|_2, \quad \forall u: | u | \leq 2s.
\end{equation}
As shown in \cite{crt}, if a matrix $\Psi \in \mathbb{C}^{m \times N}$
has this property, and if $y = \Psi c$ for some $s$-sparse vector $c
\in \mathbb{C}^N$, then $c$ is guaranteed to also be the vector of
minimal $\ell_1$-norm among solutions $c'$ to the underdetermined
system $\Psi c' = y$.  As $\ell_1$-minimization can be solved
efficiently using linear programming, the sparse coefficient vector
$c$ can be reconstructed efficiently.  Moreover, given any arbitrary
vector $c \in \mathbb{C}^N$ with \emph{best $s$-sparse approximation
error}
\begin{equation}
\label{bestapprox}
\varepsilon = \min_{z \in \mathbb{C}^N:  |z| \leq s} \| c - z \|_1,
\end{equation}
and the minimizing solution
$$c^{\sharp} = \arg \min \| z \|_1 \quad \textrm{subject to }  \| \Psi z  - \Psi c \|_2 \leq \epsilon,$$
then $\| c - c^{\sharp} \|_2 \lesssim \varepsilon/\sqrt{s}$.

\subsection{Sparse recovery for bounded orthonormal systems.}
In general it is hard to verify the restricted isometry property
\eqref{rip} holds for a given matrix $\Psi$, but in the following
set-up it can be assured with high probability.  Suppose we have a
system of functions $\{ \psi_j$, $j \in [N] \}$ which are orthonormal
on a measurable space ${M}$ endowed with a probability measure $\nu$,
i.e.
 \begin{equation}
\int_{M} \psi_j(x) \overline{\psi_i(x)} d\nu(x) = \delta_{i,j}, \quad i,j \in [N].
\end{equation} 
Suppose further that $m$ sampling points $x_i \in M$ are drawn
independently from the orthogonalization measure $\nu$.  Then, as
shown in \cite{strucrand}, with high probability with respect to the
draw of the sampling points, the normalized sampling matrix
$\frac{1}{\sqrt{m}}\Psi $, where $\Psi_{i,j} = \psi_j(x_i)$, satisfies
\eqref{rip} as long as the number of samples $m \gtrsim B^2 s
\log^4(N)$, where
\begin{equation}
\label{ubounded}
B = \max_j \| \psi_j \|_{\infty}.
\end{equation}
The parameter $B$ should be interprested as a measure of
\emph{incoherence} between the basis $\psi_j$ and pointwise
measurements; the smaller $B$, the fewer number $m$ of sampling points
$\big( f(x_1), f(x_2), ..., f(x_m)\big)^{t} = \Psi c$ are needed to
recover sparse expansions \eqref{f:sparse}.  This can be interpreted
as a discrete Heisenberg uncertainty principle \cite{dostarck}.  A
precise statement follows.

\begin{prop}
\label{thm:BOS} 
Suppose $ \{ x_i: i \in [m] \}$ is a set of independent and
identically distributed (i.i.d.) sampling points drawn from the
orthogonalization measure $\nu$ associated to an orthonormal system of
functions $\{ \psi_j, j \in[N] \}$ with uniform bound $B = \max_j \|
\psi_j \|_{\infty}$.  If
\begin{equation}\label{BOS:RIP:cond}
m \gtrsim B^2 s \log^4(N),
\end{equation}
then with probability at least $1-N^{-\log^3(s)},$ the following holds
for all $f(x) = \sum_{j=1}^{N} c_j \psi_j(x)$ with $s$-term
approximation error $\varepsilon$ as in \eqref{bestapprox}.  Given $m$
observations $y_{i} = f(x_{i})$, or more concisely $y = \Psi c$, and
the minimizer
\begin{equation}\label{l1eps:prog}
c^\# = \arg \min_{z \in \mathbb{C}^N}  \| z \|_1 \mbox{ subject to }  \frac{1}{\sqrt{m}} \| \Psi z  - \Psi c \|_2 \leq \epsilon,
\end{equation}
it follows that
\begin{equation}
\label{l12noise}
\|c - c^\#\|_2 \lesssim \varepsilon/\sqrt{s}.
\end{equation}
In particular, if $f$ is $s$-sparse then reconstruction is exact, $c^\# = c$.
\end{prop}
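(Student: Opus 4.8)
The plan is to obtain Proposition~\ref{thm:BOS} by chaining together two results already quoted in the text: the probabilistic statement that the random sampling matrix $\tfrac{1}{\sqrt m}\Psi$ enjoys the restricted isometry property \eqref{rip}, and the deterministic principle of \cite{crt} that the restricted isometry property forces stable $\ell_1$-recovery. Neither step requires reproving anything; the work is in lining up the hypotheses and the constants.

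\emph{Step 1: the sampling matrix has RIP with the claimed probability.} Fix once and for all a small absolute constant $\delta_0$, chosen below the threshold that Step~2 needs (for concreteness $\delta_0 = 1/3$). Since $\{\psi_j\}_{j\in[N]}$ is orthonormal for $\nu$ and the $x_i$ are i.i.d.\ draws from $\nu$, I invoke the structured-random-matrix estimate of \cite{strucrand} with this $\delta_0$: there is an absolute $c_0$ so that if
\begin{equation*}
m \ge c_0\, B^2 s \log^4 N, \qquad B = \max_j \|\psi_j\|_\infty,
\end{equation*}
then with probability at least $1 - N^{-\log^3 s}$ the matrix $\tfrac{1}{\sqrt m}\Psi$, $\Psi_{ij}=\psi_j(x_i)$, satisfies $(1-\delta_0)\|u\|_2 \le \tfrac{1}{\sqrt m}\|\Psi u\|_2 \le (1+\delta_0)\|u\|_2$ for all $u$ with $|u|\le 2s$; that is, $\tfrac{1}{\sqrt m}\Psi$ has the restricted isometry property of order $2s$ and level $\delta_0$. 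This is exactly the content of hypothesis \eqref{BOS:RIP:cond}, and it supplies the failure probability $N^{-\log^3 s}$ appearing in the statement. This is the only substantial input: it rests on a Dudley-type chaining estimate for the supremum of the empirical process $u \mapsto \tfrac1m\sum_i |(\Psi u)_i|^2 - \|u\|_2^2$ over $2s$-sparse unit vectors, and the sharp exponent $\log^4 N$ (rather than a naive $\log^5 N$) uses the refinement of Bourgain and Haviv--Regev incorporated in \cite{strucrand}.

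\emph{Step 2: RIP implies the recovery bound.} Work on the event produced in Step~1. By \cite{crt}, once $\tfrac{1}{\sqrt m}\Psi$ has the restricted isometry property of order $2s$ with $\delta_0$ small enough, the following holds for every $c\in\mathbb{C}^N$: writing $\varepsilon$ for the best $s$-term error \eqref{bestapprox} and $c_s$ for a minimizer there, one checks via the RIP that the effective noise $\tfrac{1}{\sqrt m}\|\Psi(c-c_s)\|_2$ is $\lesssim \varepsilon$, so that $c_s$ is (up to an absolute constant) feasible for the program \eqref{l1eps:prog}; feeding $y=\Psi c$ from \eqref{yfromf} into the standard RIP-based $\ell_1$-analysis of \cite{crt} then yields $\|c-c^\#\|_2 \lesssim \varepsilon/\sqrt s$, which is \eqref{l12noise}.

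\emph{Step 3: exactness in the sparse case, and where the difficulty lies.} If $f=\sum_j c_j\psi_j$ is genuinely $s$-sparse, then $|c|\le s$ forces $\varepsilon=0$ in \eqref{bestapprox}, and Step~2 gives $\|c-c^\#\|_2=0$, i.e.\ $c^\#=c$ and $f$ is reconstructed exactly. The only genuine obstacle in the whole argument is Step~1 (the probabilistic RIP bound with the optimal logarithmic power and the stretched-exponential failure probability); Steps~2 and~3 are bookkeeping, the only price being that fixing a valid RIP level $\delta_0$ changes the implicit constant in $m \gtrsim B^2 s\log^4 N$.
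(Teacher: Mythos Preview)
Your proposal is correct and matches the paper's treatment. The paper does not give a formal proof of this proposition: it is presented as a compilation of the two results you cite, namely the probabilistic RIP bound for bounded orthonormal systems from \cite{strucrand} and the deterministic RIP-to-$\ell_1$-recovery principle from \cite{crt}, exactly as you lay out in Steps~1--3.
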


Let us apply Proposition \ref{thm:BOS} to a concrete example.  The
orthonormal system of complex exponentials $\psi_j (t) = e^{2\pi i j
t}, t \in [0,1],$ has optimal uniform bound $B= \max_{j} \| \psi_j
\|_{\infty} = 1$.  Applying Proposition \ref{thm:BOS}, we see that $m
\gtrsim s \log^4(N)$ sampling points drawn independently from the
uniform measure on $[0,1]$ will be sufficient to identify any
$s$-sparse trigonometric polynomial of degree at most $N$.

The complex exponentials are eigenfunctions of the Laplacian on the
circle. More generally, for a compact $ n$-dimensional Riemannian
manifold, the $L^2$-normalized eigenfunctions with eigenvalue
$\lambda$ are bounded in $L^{\infty}$ by $\lambda^{(n-1)/4}$ -- see
\cite[Section 7.4]{e-z} and references given there.  Since the number
of eigenvalues less than $ \lambda $ behaves like $ N = \lambda^{n/2}
$ (see \cite{Horm} or \cite[Section 14.3]{e-z}), we obtain a uniform
bound on the first $N$ eigenfunctions of a general $n$-dimensional
manifold:
\[    B \simeq N^{ \frac{ n-1} { 2 n } } . \]

Applying Proposition~\ref{thm:BOS} immediately gives the following.
\begin{cor}
\label{cor1}
Let $ ( M , g ) $ be a compact $ n$-dimensional Riemannian manifold
and let $\{ \psi_j, j \in [N] \} $ be the first $ N$ eigenfunctions of
the Laplacian on $ M $ (with respect to the ordering of
eigenfunctions).

Suppose $ \{ x_i : i \in [m] \}$ is a set of independent and
identically distributed sampling points drawn from the measure given
by the Riemannian volume.  If the number of sampling points satisfies
\begin{equation}\label{BOS:man1}
m \gtrsim  s  N^{\frac{ n-1} n } \log^4(N),
\end{equation}
then with probability at least 
$1-N^{-\log^3(s)},$ the following  holds for all $f(x) = \sum_{j=1}^{N} c_j \psi_j(x)$.

If $\Psi$ is the sampling matrix associated to the eigenfunctions and sampling points, and if $ c^\# $ is defined as in \eqref{l1eps:prog} then 
\[ 
\|c - c^\#\|_1 \lesssim \varepsilon,
\]
where $\varepsilon$ is the $s$-term approximation error \eqref{bestapprox}.
\end{cor}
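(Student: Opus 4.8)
The plan is to deduce the statement directly from Proposition~\ref{thm:BOS}, the only genuinely analytic ingredient being the classical sup-norm bound on Laplace eigenfunctions that was already quoted above. First I would pass from the Riemannian volume to the associated probability measure $\nu = dV_g/\Vol(M)$; replacing each $\psi_j$ by $\sqrt{\Vol(M)}\,\psi_j$ turns $\{\psi_j : j\in[N]\}$ into an orthonormal system in $L^2(M,\nu)$, and since $\Vol(M)$ is a fixed geometric constant this rescaling changes the uniform bound $B=\max_j\|\psi_j\|_\infty$ only by a bounded factor, with no effect on the power of $N$.

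Next I would invoke the eigenfunction estimate recalled before the corollary: an $L^2$-normalized eigenfunction with eigenvalue $\lambda$ satisfies $\|\psi\|_\infty \lesssim \lambda^{(n-1)/4}$ (H\"ormander/Sogge; see \cite[Section 7.4]{e-z}), while Weyl's law gives $N \simeq \lambda^{n/2}$ for the number of eigenvalues below $\lambda$ (see \cite{Horm} or \cite[Section 14.3]{e-z}). Hence $\lambda \simeq N^{2/n}$ for the $N$-th eigenvalue, so over the first $N$ eigenfunctions $B \lesssim N^{(n-1)/(2n)}$ and $B^2 \lesssim N^{(n-1)/n}$. Substituting this into the hypothesis \eqref{BOS:RIP:cond} of Proposition~\ref{thm:BOS} shows that the assumption \eqref{BOS:man1}, namely $m \gtrsim s N^{(n-1)/n}\log^4(N)$, implies $m \gtrsim B^2 s \log^4(N)$.

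Proposition~\ref{thm:BOS} then applies verbatim with $x_i$ drawn i.i.d.\ from $\nu$: with probability at least $1-N^{-\log^3(s)}$ the normalized sampling matrix has the restricted isometry property of order $2s$, and the minimizer $c^\#$ of \eqref{l1eps:prog} obeys $\|c-c^\#\|_2 \lesssim \varepsilon/\sqrt s$. The one point requiring a word of care — and the only step that is not pure substitution — is that the corollary states the $\ell_1$ recovery bound $\|c-c^\#\|_1 \lesssim \varepsilon$ rather than the $\ell_2$ bound: this follows because the restricted isometry property of order $2s$ also yields the $\ell_1$ robust null space property, from which $\|c-c^\#\|_1 \lesssim \varepsilon$ is obtained by the standard argument in \cite{crt}. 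Apart from keeping track of which norm of the reconstruction error is being controlled, there is no real obstacle here; the content of the corollary is entirely in the eigenfunction bound $B \lesssim N^{(n-1)/(2n)}$, which is cited.
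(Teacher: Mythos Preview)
Your proof is correct and follows exactly the route the paper takes: the paper simply cites the eigenfunction bound $\|\psi\|_\infty \lesssim \lambda^{(n-1)/4}$ together with Weyl's law $N\simeq\lambda^{n/2}$ to obtain $B\simeq N^{(n-1)/(2n)}$, and then says ``Applying Proposition~\ref{thm:BOS} immediately gives the following.'' You are in fact slightly more careful than the paper in flagging the $\ell_1$-versus-$\ell_2$ discrepancy between the conclusion of Proposition~\ref{thm:BOS} and the stated corollary, and your remedy via the robust null space property from~\cite{crt} is the standard and correct way to close that gap.
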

As $n \rightarrow \infty$, the bound \eqref{BOS:man1} becomes weaker
and weaker.  However, even when $n=1$, Proposition \ref{thm:BOS} can
be rather restrictive for certain function systems.  Consider the
($L^2$-normalized) Legendre polynomials $\{ P_j\}$, the unique
orthonormal polynomials with respect to the Lebesgue measure on
$[-1,1]$.  The Legendre polynomials satisfy $B = \| P_j \|_{\infty} =
|P_j(1)| = (j+1)^{1/2}$.  In this situation, Proposition \ref{thm:BOS}
gives only that $m \gtrsim sN\log^4(N)$ measurements are necessary for
identifying functions with an $s$-sparse expansion in the first $N$
Legendre polynomials - a trivial estimate.  Proposition \ref{thm:BOS}
can however be adapted to give meaningful estimates in a more general
setting, as introduced in \cite{RW2}.

\begin{prop}
\label{thm:BOS2}
Let $\{ \psi_j , j \in [N] \} $ be an orthonormal system of functions
on a probability space ${{M}}$ with orthogonalization measure $\nu$.

Suppose that $\omega: {{M}} \rightarrow \mathbb{R}$ satisfies
$\int_{{M}} \omega(x) \nu(x) dx = 1$, and suppose that the functions
$Q_j(x) = \omega(x)^{-1/2} \psi_j(x)$ are bounded:
\begin{equation}
\label{eq:QkK}
\sup_{j \in [N]} \sup_{x \in {M}} |Q_j(x)| \leq K.
\end{equation}
Suppose $ \{ x_i: i \in [m] \}$ are i.i.d. sampling points from the
\emph{composite} orthogonalization measure $\mu = \omega \nu$, and let
$A$ be the preconditioned sampling matrix with entries $A_{i,j} =
Q_j(x_i) = \omega(x_i)^{-1/2} \psi_j(x_i)$.

If $m \gtrsim K^2 s \log^4(N)$ then with probability at least
$1-N^{-\log^3(s)}$ the following holds for all $f(x) = \sum_{j=1}^N
c_{j} \psi_{j}(x)$ with best $s$-term approximation error
$\varepsilon$.  Given $m$ observations $y_{k} = f(x_{k})$, or more
concisely $y = A c$, and the minimizer
\begin{equation}\label{l1eps:prog'}
c^\# = \arg \min_{z \in \mathbb{C}^N}  \| z \|_1 \mbox{ subject to }  \frac{1}{\sqrt{m}} \| A z  -A c \|_2 \leq \epsilon,
\end{equation}
it follows that
\begin{equation}
\label{l12noise'}
\|c - c^\#\|_2 \lesssim \varepsilon/\sqrt{s}.
\end{equation}\end{prop}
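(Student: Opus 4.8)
The plan is to deduce Proposition~\ref{thm:BOS2} directly from Proposition~\ref{thm:BOS} by passing to the preconditioned system $\{Q_j\}$ equipped with the composite measure $\mu=\omega\nu$. The first step is to verify that $\{Q_j, j\in[N]\}$ is a bounded orthonormal system on the probability space $(M,\mu)$. That $\mu$ is a probability measure is exactly the hypothesis $\int_M \omega\, d\nu = 1$. Orthonormality follows from the change of measure $d\mu = \omega\, d\nu$: for any $i,j\in[N]$,
\[
\int_M Q_j\,\overline{Q_i}\,d\mu = \int_M \omega^{-1/2}\psi_j\,\overline{\omega^{-1/2}\psi_i}\;\omega\,d\nu = \int_M \psi_j\,\overline{\psi_i}\,d\nu = \delta_{ij},
\]
and the uniform bound $\sup_j\|Q_j\|_\infty\le K$ is precisely \eqref{eq:QkK}.

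Second, I would rewrite the recovery problem in terms of $\{Q_j\}$. Given $f=\sum_j c_j\psi_j$, set $g=\omega^{-1/2}f=\sum_j c_jQ_j$; the two functions share the identical coefficient vector $c$, so the best $s$-term approximation error of $g$ with respect to $\{Q_j\}$ equals the error $\varepsilon$ of $c$ from \eqref{bestapprox}. The points $x_i$ are drawn i.i.d.\ from $\mu$, which is the orthogonalization measure of $\{Q_j\}$, and the sampling matrix of $\{Q_j\}$ at these points has entries $Q_j(x_i)=\omega(x_i)^{-1/2}\psi_j(x_i)=A_{i,j}$. The preconditioned data vector $y$ with $y_i=\omega(x_i)^{-1/2}f(x_i)=g(x_i)$ then satisfies $y=Ac$, matching the constraint in \eqref{l1eps:prog'}.

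Third, I would apply Proposition~\ref{thm:BOS} to the system $\{Q_j\}$ on $(M,\mu)$ with uniform bound $B=K$. The hypothesis $m\gtrsim K^2 s\log^4(N)$ is condition \eqref{BOS:RIP:cond} for this system, so with probability at least $1-N^{-\log^3(s)}$ the normalized matrix $\frac{1}{\sqrt m}A$ has the restricted isometry property \eqref{rip} of order $2s$; the conclusion \eqref{l12noise} of Proposition~\ref{thm:BOS}, applied to $g$ with data $y=Ac$ and minimizer \eqref{l1eps:prog'}, is exactly the asserted bound \eqref{l12noise'}, since $f$ and $g$ have the same coefficient vector.

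There is essentially no analytic obstacle here: the content of Proposition~\ref{thm:BOS2} is the observation that preconditioning by $\omega^{-1/2}$ while reweighting the sampling measure by $\omega$ turns a possibly unbounded orthonormal system into a bounded one without altering the coefficient vector to be recovered. The only point requiring a little care is the bookkeeping — ensuring that the data fed to the $\ell_1$ program is the preconditioned sample vector $y_i=\omega(x_i)^{-1/2}f(x_i)$ rather than $f(x_i)$ itself, and that the $s$-term error is measured on the common coefficient vector $c$. Once this is arranged, the statement is an immediate specialization of Proposition~\ref{thm:BOS}.
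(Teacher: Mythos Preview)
Your proposal is correct and follows exactly the paper's approach: verify that $\{Q_j\}$ is a bounded orthonormal system with respect to $\mu=\omega\nu$ and then invoke Proposition~\ref{thm:BOS} with $B=K$. The paper's proof is in fact a one-sentence version of your argument, so you have simply filled in the orthonormality computation and the bookkeeping about the shared coefficient vector.
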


\begin{proof}
Apply Proposition \ref{thm:BOS} to the system $\{ Q_{j} \}_{j \in
[N]}$, which is a bounded orthonormal system: $\sup_{j \in [N]} \| Q_j
\|_{\infty} \leq K$ and the $Q_j$ are orthonormal with respect to the
composite measure $\mu = \omega \nu$.
\end{proof}

Proposition \ref{thm:BOS2} quantifies the link between weighted
$L^{\infty}$ estimates on orthonormal function systems and sparse
recovery guarantees. For the Legendre polynomials, which satisfy the
weighted $L^{\infty}$ estimate $$(1-x^2)^{1/2} | P_j(x) | \leq 2
\sqrt{\pi},$$ Proposition \ref{thm:BOS2} gives that $m \gtrsim
s\log^4{N}$ sampling points from the \emph{Chebyshev} measure $\mu(x)
= \sin^{-1/2}(x)dx$ are sufficient for recovering $s$-sparse
expansions in the first $N$ Legendre polynomials.  For details, see
\cite{RW2}.

Below we summarize how the weighted $L^{\infty}$ estimates of this
paper improve and generalize previous results and give rise to
Proposition~\ref{theo:sphere}.

\begin{enumerate}
\item  In \cite{krasikov}, Krasikov proves the following weighted $L^{\infty}$ estimate for the spherical harmonics $Y_{\ell}^k$:
 $$
(\sin \theta) ^{1/2} | Y_{\ell}^k ( \varphi, \theta ) | \lesssim
\ell^{1/4}.
$$
This estimate implies 
\begin{equation}
\label{krasbound'} \sup_{0 \leq \ell \leq \sqrt{N}-1} \sup_{-\ell \leq k \leq \ell} ( \sin \theta) ^{1/2}  | Y_{\ell}^k ( \varphi, \theta ) | \lesssim N^{1/8}.
\end{equation}
In \cite{RaW}, the authors apply Proposition \ref{thm:BOS2} using this
estimate to conclude that $m \gtrsim s N^{1/4} \log^{4} N$ sampling
points on the sphere with angular coordinates $(\theta_i, \varphi_i)$
drawn independently from the measure $d\theta d\varphi$ on $[0,\pi]
\times [0,2\pi)$ suffice for recovering bandlimited sparse spherical
harmonic expansions of the form \eqref{spheresparse}.  This improved
on the $m \gtrsim s N^{1/2} \log^4(N)$ required sampling points given
by Corollary \ref{cor1}, if sampling points are drawn i.i.d. from the
uniform distribution on the sphere.

\item The weighted $L^{\infty}$ estimate given in Theorem \ref{t:sph}
provides even stronger sparse recovery guarantees for spherical
harmonic expansions.  Corollary \ref{cor3} results from applying this
estimate to Proposition \ref{thm:BOS2}: $m \gtrsim s N^{1/6}
\log^4(N)$ sampling points $(\theta_i, \varphi_i)$ from the measure $|
\tan(\theta) |^{1/3} d\theta  d\varphi$ suffice for recovering sparse bandlimited spherical harmonic expansions \eqref{spheresparse}. 
\item 
As summarized in Proposition \ref{theo:sphere}, the weighted
$L^{\infty}$ estimate of Corollary \ref{cor2} gives rise to more
general sampling strategies for recovering sparse eigenfunction
expansions on strictly convex surfaces of revolution.
\end{enumerate}

\section{Weighted eigenfunction estimates for surfaces of revolution}
\label{wee}

If $M$ be a smooth surface of revolution, let $\partial_\varphi$ be
the vector field generating the action of the circle $\mathbb
S^1=\mathbb R/(2\pi \mathbb Z)$ on $M$ by rotations around the axis of
revolution. Denote by $\Delta$ the Laplace--Beltrami operator on $M$,
and by $ D_\varphi $ the self-adjoint operator $ \frac 1
i \partial_\varphi $ which commutes with $ \Delta$. This follows the
standard convention for the operators quantizing momenta.

Let $h>0$ be a small parameter, and assume that $u\in \CI(M)$
satisfies the conditions
\begin{gather}
\label{e:original-cond-1}
\|u\|_{L^2}\leq C_0 \,,\\
\label{e:original-cond-2}
\|(-h^2\Delta-1)u\|_{L^2}\leq C_0h\,,\\
\label{e:original-cond-3}
(hD_\varphi-\alpha)u=0 \,. 
\end{gather}
Here $\alpha\in h \mathbb Z$ varies in a fixed compact set and $C_0$
is some fixed constant. Both $-h^2\Delta-1$ and $hD_\varphi-\alpha$
are semiclassical differential operators; we will freely use the
notation of semiclassical analysis that can be found, for example,
in~\cite[Chapter 4]{e-z}.

We also assume that $u$ satisfies the following
localization assumption: there exists a compactly microlocalized
operator $X(h)$ (that is, $X(h)=\psi^w(x,hD_x)+\mathcal O_{H^{-N}_h\to
H^N_h}(h^N)$ for some $\psi\in \CIc (T^*M)$ and each $N$) and
fixed constants $C_N$ such that for each $N$,
\begin{equation}\label{e:original-cond-4}
\|(1- X(h) )u\|_{H^N_h}\leq C_Nh^N. 
\end{equation}

\vspace{5mm}

{\bf Remark:} 
Conditions~\eqref{e:original-cond-1}, \eqref{e:original-cond-2},
and~\eqref{e:original-cond-4} are in particular satisfied if $u$ is an
$L^2$ normalized eigenfunction of $-\Delta$ for an eigenvalue in the
segment
\[  h^{-2} [1 -C_0h , 1 +C_0h] \,,\]
as applied in Proposition~\ref{theo:sphere}.

\vspace{5mm}

The weaker condition~\eqref{e:original-cond-2} has the advantage that
it is local:
\begin{figure}
\includegraphics{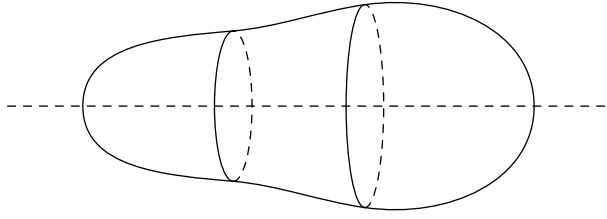}
\caption{A surface of revolution}
\end{figure}
\begin{prop}\label{l:cutoff}
Assume that $\chi\in \CI(M)$ and $D_\varphi\chi=0$.
If $u$ satisfies~\eqref{e:original-cond-1}--\eqref{e:original-cond-3}, then
$\chi u$ satisfies these conditions as well, possibly with larger
value of the constant $C_0 $. 

Similarly, if condition \eqref{e:original-cond-4} holds for $u$, it
holds for $ \chi u $.
\end{prop}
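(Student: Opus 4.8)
The plan is to verify the four conditions for $\chi u$ separately. Two of them are immediate. For \eqref{e:original-cond-1}, since $\chi\in\CI(M)$ and $M$ is compact, $\|\chi u\|_{L^2}\le\|\chi\|_{L^\infty}\|u\|_{L^2}$, so the condition holds with $C_0$ enlarged by the factor $\|\chi\|_{L^\infty}$. For \eqref{e:original-cond-3}, the hypothesis $D_\varphi\chi=0$ says exactly that the multiplication operator $\chi$ commutes with $D_\varphi$, hence with $hD_\varphi-\alpha$, so $(hD_\varphi-\alpha)(\chi u)=\chi(hD_\varphi-\alpha)u=0$.

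For \eqref{e:original-cond-2} I would set $P:=-h^2\Delta-1$ and expand $P(\chi u)=\chi Pu+[P,\chi]u=\chi Pu-h^2(\Delta\chi)u-2h^2\langle\nabla\chi,\nabla u\rangle$, using the product rule $[\Delta,\chi]u=(\Delta\chi)u+2\langle\nabla\chi,\nabla u\rangle$ for the Laplace--Beltrami operator. In $L^2$ the first term is at most $\|\chi\|_{L^\infty}C_0h$ by \eqref{e:original-cond-2}, the second at most $h^2\|\Delta\chi\|_{L^\infty}C_0\le h\|\Delta\chi\|_{L^\infty}C_0$ for $h\le1$, and the third equals $2h\langle\nabla\chi,h\nabla u\rangle$, of norm at most $2h\|\nabla\chi\|_{L^\infty}\|h\nabla u\|_{L^2}$. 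The extra input needed is the a priori energy bound $\|h\nabla u\|_{L^2}=\mathcal O(1)$, which follows by integration by parts on $M$ (no boundary term): $\|h\nabla u\|_{L^2}^2=\langle -h^2\Delta u,u\rangle=\langle Pu,u\rangle+\|u\|_{L^2}^2\le C_0^2h+C_0^2$. Collecting the three estimates gives $\|P(\chi u)\|_{L^2}\le C_1h$ with $C_1$ depending only on $\chi$ and $C_0$.

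For \eqref{e:original-cond-4}, let $\psi\in\CIc(T^*M)$ be the symbol with $X(h)=\psi^w(x,hD_x)+\mathcal O_{H^{-N}_h\to H^N_h}(h^N)$, choose $\widetilde\psi\in\CIc(T^*M)$ equal to $1$ near $\supp\psi$, and set $\widetilde X(h):=\widetilde\psi^w(x,hD_x)$, which is compactly microlocalized. I would use two standard facts: multiplication by $\chi\in\CI(M)$ is bounded on every $H^N_h$ uniformly for $h\le1$, and, by the semiclassical symbol calculus, $(1-\widetilde X(h))\,\chi\,\psi^w(x,hD_x)=\mathcal O_{H^{-N}_h\to H^N_h}(h^\infty)$, since $\chi\,\psi^w(x,hD_x)$ is microsupported in $\supp\psi$ while $1-\widetilde X(h)$ is microsupported in its complement. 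Then I split $\chi u=\chi X(h)u+\chi(1-X(h))u$ and apply $1-\widetilde X(h)$: the first piece $(1-\widetilde X(h))\chi X(h)u$ is $\mathcal O_{H^N_h}(h^N)$ by the two facts together with $\|u\|_{H^{-N}_h}\le\|u\|_{L^2}\le C_0$, and the second piece $(1-\widetilde X(h))\chi(1-X(h))u$ is bounded in $H^N_h$ by $C\|(1-X(h))u\|_{H^N_h}\le CC_Nh^N$ using \eqref{e:original-cond-4} for $u$; hence $\|(1-\widetilde X(h))(\chi u)\|_{H^N_h}\le\widetilde C_Nh^N$ for each $N$.

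The only step requiring genuine care is \eqref{e:original-cond-2}: the first-order commutator term $h^2\langle\nabla\chi,\nabla u\rangle$ is not controlled by \eqref{e:original-cond-2} directly, and the point is to notice that \eqref{e:original-cond-1}--\eqref{e:original-cond-2} already force $\|h\nabla u\|_{L^2}=\mathcal O(1)$. Everything else is bookkeeping, and \eqref{e:original-cond-4} is just the pseudolocality of multiplication operators.
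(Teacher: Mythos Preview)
Your proof is correct and follows essentially the same line as the paper's: conditions~\eqref{e:original-cond-1} and~\eqref{e:original-cond-3} are immediate; for~\eqref{e:original-cond-2} both you and the paper reduce to controlling the commutator $[-h^2\Delta,\chi]$ by the energy bound $\|u\|_{H^1_h}=\mathcal O(1)$, which the paper obtains from $\|u\|_{H^1_h}^2\sim((-h^2\Delta+1)u,u)_{L^2}$ and you obtain from the equivalent integration-by-parts identity $\|h\nabla u\|_{L^2}^2=\langle -h^2\Delta u,u\rangle$; and for~\eqref{e:original-cond-4} both arguments pick a new localizer equal to the identity near the microsupport of $X(h)$ and use that $(1-\widetilde X)\chi X(h)$ is $\mathcal O(h^\infty)$ smoothing by disjointness of microsupports. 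Your write-up is simply a more explicit version of the paper's proof.
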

\begin{proof}
Conditions~\eqref{e:original-cond-1} and~\eqref{e:original-cond-3} for
$\chi u$ are trivially satisfied; we now
verify~\eqref{e:original-cond-2}.  Since the commutator
$[-h^2\Delta,\chi]$ is equal to $h$ times a semiclassical differential
operator of order 1, we have
$$
\begin{gathered}
\|(-h^2\Delta-1)\chi u\|_{L^2}\leq
\|\chi(-h^2\Delta-1)u\|_{L^2}+\|[-h^2\Delta,\chi]u\|_{L^2}\\=
\mathcal O(h(1+\|u\|_{H^1_h})).
\end{gathered}
$$
However,
$$
\|u\|_{H^1_h}^2\sim ((-h^2\Delta+1)u,u)_{L^2}={\mathcal O}(1)
$$
by~\eqref{e:original-cond-1} and~\eqref{e:original-cond-2}.
Here $ H_h^s $ denotes the semiclassical Sobolev space
defined using the norm $ \| ( I - h^2 \Delta_M)^{ s /2} u \|_{L^2
} $.

To verify~\eqref{e:original-cond-4}, we use that $\chi u=\chi
X(h)u+{\mathcal O}_{\CI}(h^\infty)$; however, if $Y(h)$ is a compactly
microlocalized pseudodifferential operator equal to the identity
microlocally near the wavefront set of $X(h)$ (and thus of $\chi
X(h)$), then $(1-Y(h))\chi u=(1-Y(h))\chi X(h)u+{\mathcal
O}_{\CI}(h^\infty)={\mathcal O}_{\CI}(h^\infty)$.

\end{proof}
Proposition~\ref{l:cutoff} implies that, if we want to obtain weighted
$L^\infty$ (or any other local) estimates on every function $u$
satisfying~\eqref{e:original-cond-1}--\eqref{e:original-cond-3}, it is
enough to cover $M$ by open sets invariant under rotation (which we
will call bands) and prove the estimates for functions supported in
each of these bands. The next result provides weighted $L^\infty$
estimates for three common types of behavior of the metric in bands:
\begin{theo}
\label{t:1}
Let $U_\varepsilon\subset M$ be a band given by one of the three cases
below; the small parameter $\varepsilon>0$ characterizes the width of
this band. Then for $\varepsilon>0$ and $h>0$ small enough and some
constant $C$, and for each $\alpha\in h \mathbb Z$ varying in a fixed
compact set, each function $u\in \CI(M)$ supported in $U_\varepsilon$
and satisfying~\eqref{e:original-cond-1}--\eqref{e:original-cond-4}
has the following weighted $L^\infty$ estimates:
\begin{figure}
\includegraphics{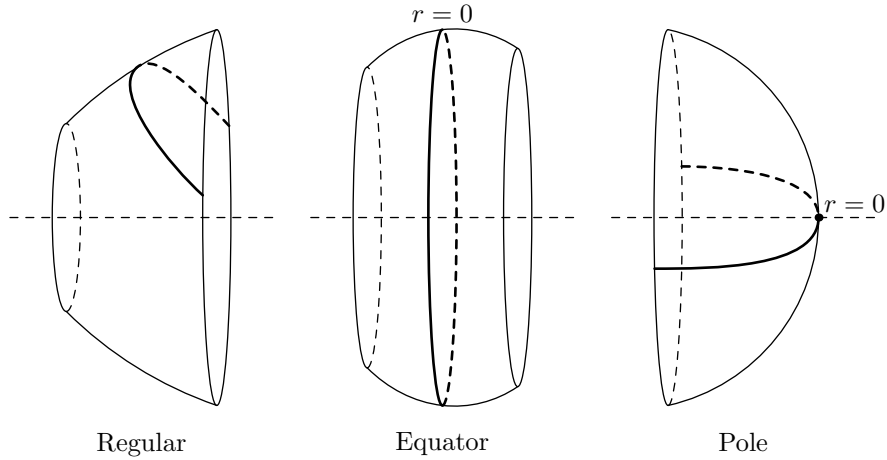}
\caption{Three considered types of bands, with a geodesic shown for each type}
\end{figure}
\begin{enumerate}
\item Regular case: $U_\varepsilon$ has coordinates
$(r,\varphi)\in (-\varepsilon,\varepsilon)\times \mathbb S^1$ and the
metric
\begin{equation}\label{e:regular-band}
g=g(r)[dr^2+d\varphi^2].
\end{equation}
Here $g$ is a smooth function independent of $\varepsilon$, and
$g(0)>0$, $g'(0)\neq 0$. The corresponding estimate is
\begin{equation}\label{e:regular-est}
|u(r,\varphi)|\leq Ch^{-1/6}.
\end{equation}
\item Elliptic equator: $U_\varepsilon$ has the same
coordinates and metric as in the regular case, but $g(0)>0$,
$g'(0)=0$, and $g''(0)<0$. The corresponding estimates are
\begin{gather}
\label{e:equator-est-1}
|u(r,\varphi)|\leq C\min(h^{-1/4}\,, \, h^{-1/6}r^{-1/6}  \,, \,  h^{-1/6}|g(0)-\alpha^2|^{-1/12});\\
\label{e:equator-est-2}
|u(r,\varphi)|\leq Cr^{-1/2} \ \text{ for }|g(0)-\alpha^2|<\varepsilon r^2.
\end{gather} 
\item Pole: $U_\varepsilon$ has coordinates
$(x,y)=(r\cos\varphi,r\sin\varphi)$, with $|x|^2+|y|^2<\varepsilon^2$,
and the metric
\begin{equation}\label{e:pole-band}
g=g(x^2+y^2)[dx^2+dy^2]=g(r^2)[dr^2+r^2\,d\varphi^2],
\end{equation}
where $ g $ is a smooth function and  $g(0)>0$. 
The corresponding estimates are
\begin{gather}
\label{e:pole-est-1}
|u(r,\varphi)|\leq C\min(h^{-1/2}\, , \, h^{-1/6}r^{-1/3} \, , \, h^{-1/6}|\alpha|^{-1/3});\\
\label{e:pole-est-2}
|u(r,\varphi)|\leq Cr^{-1/2} \ \text{ for }|\alpha|<\varepsilon r.
\end{gather}
\end{enumerate}
\end{theo}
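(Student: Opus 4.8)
The plan is to reduce all three cases to a one-dimensional semiclassical turning-point problem and then read off sharp $L^\infty$ bounds from the relevant model operators.

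\emph{Reduction to one dimension.} Since $(hD_\varphi-\alpha)u=0$ with $\alpha=hk$, $k\in\mathbb Z$, we have $u(r,\varphi)=v(r)e^{ik\varphi}$, so $|u(r,\varphi)|=|v(r)|$ and it suffices to bound $\|v\|_{L^\infty}$. Writing $-h^2\Delta$ in band coordinates, restricting to the $k$-th Fourier mode, multiplying the resulting equation by the conformal factor of the metric, and — in the pole case — substituting $w=r^{1/2}v$ to remove the first-order term, turns \eqref{e:original-cond-2} into $(h^2D_r^2+V_\alpha(r))v=\mathcal O_{L^2(dr)}(h)$ with $\|v\|_{L^2(dr)}=\mathcal O(1)$ by \eqref{e:original-cond-1}, where the effective potential is $V_\alpha(r)=\alpha^2-g(r)$ in the regular and equatorial cases and $V_\alpha(r)=(\alpha^2-h^2/4)r^{-2}-g(r^2)$ in the pole case; condition \eqref{e:original-cond-4} lets us treat $v$, modulo $\mathcal O(h^\infty)$, as a compactly supported $L^2$-quasimode of this operator that is microlocalized to the characteristic set. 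The crude bound $\|v\|_{L^\infty}\lesssim h^{-1/2}$, from a one-dimensional energy estimate and Sobolev embedding, already supplies the first term of \eqref{e:pole-est-1}.

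\emph{Turning-point parametrices.} Away from $\{V_\alpha=0\}$ the estimates are routine: in the classically allowed region WKB gives $|v|\lesssim V_\alpha^{-1/4}$, and in the forbidden region $v=\mathcal O(h^\infty)$ at fixed distance from the turning set. Near a simple zero $r_0$ of $V_\alpha$ one conjugates $h^2D_r^2+V_\alpha$ to the Airy operator $h^2D_x^2\pm x$ by an exact semiclassical Fourier integral operator (the Langer transformation), reducing matters to the bound $\lesssim h^{-1/6}|V_\alpha'(r_0)|^{-1/6}$ for $L^2$-quasimodes of the Airy operator, which follows from the $L^2$-normalized solution $h^{-1/6}Ai(h^{-2/3}\,\cdot\,)$ and $\|Ai\|_{L^\infty}<\infty$. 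In the regular case $|V_\alpha'(r_0)|=|g'(r_0)|$ is bounded below, giving \eqref{e:regular-est}. In the pole case $V_\alpha$ carries the centrifugal barrier $\alpha^2r^{-2}$ and hence has a single simple turning point at $r_0$ with $r_0$ and $|V_\alpha'(r_0)|^{-1}$ both comparable to $|\alpha|$; the Airy parametrix then gives $|w|\lesssim h^{-1/6}|\alpha|^{1/6}$ near $r_0$, i.e.\ $|v|=r^{-1/2}|w|\lesssim h^{-1/6}|\alpha|^{-1/3}$ there and $\lesssim h^{-1/6}r^{-1/3}$ on an $r$-neighborhood extending into the allowed region, while for $r\gg|\alpha|$ one has $V_\alpha\asymp -g(0)$ and Bessel/WKB gives $|w|=\mathcal O(1)$, hence $|v|\lesssim r^{-1/2}$; the regular (Bessel $J$) branch is selected by smoothness of $u$ at the pole together with \eqref{e:original-cond-4}. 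This yields \eqref{e:pole-est-1}--\eqref{e:pole-est-2}.

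\emph{The equator, and the main obstacle.} At an elliptic equator $V_\alpha$ has a nondegenerate minimum $-(g(0)-\alpha^2)$ at $r=0$. If $g(0)-\alpha^2$ is bounded away from $0$ on the scale of the band width, any turning points are simple and at distance $\asymp|g(0)-\alpha^2|^{1/2}$ from $0$, so the previous step applies, and $V_\alpha(r)\asymp r^2$ in the allowed region far from the turning points gives \eqref{e:equator-est-2}; if $g(0)-\alpha^2=\mathcal O(h^{2/3})$ one models $h^2D_r^2+V_\alpha$ near $0$ by the shifted harmonic oscillator $h^2D_r^2+\tfrac12|g''(0)|r^2-(g(0)-\alpha^2)$ and transfers the bound by a normal-form conjugation, reducing \eqref{e:equator-est-1} to the known $L^\infty$ asymptotics $\|\phi_n\|_{L^\infty}\asymp(1+n)^{-1/12}$ of the Hermite functions with $n\asymp(g(0)-\alpha^2)/h$: after the $h^{1/2}$-rescaling and $L^2(dr)$-normalization this produces $h^{-1/4}$ for bounded $n$ and $h^{-1/6}|g(0)-\alpha^2|^{-1/12}$ for large $n$, with the intermediate (Airy-scale) behavior of $\phi_n$ giving the $h^{-1/6}r^{-1/6}$ term. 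The WKB and elliptic estimates are routine; the real work is making the turning-point parametrices uniform as the parameters degenerate — interpolating continuously between two well-separated simple turning points (Airy, prefactor $h^{-1/6}|g(0)-\alpha^2|^{-1/12}$) and their coalescence into a nondegenerate minimum (parabolic cylinder / Hermite, prefactor $h^{-1/4}$), and, at the pole, handling the coordinate singularity at $r=0$ together with the $\alpha\to0$ collapse of the centrifugal barrier. Establishing this uniform one-dimensional quasimode estimate (Proposition \ref{l:well-eig}) in the needed form is where the difficulty lies; the semiclassical Fourier integral operator calculus assembled in Section \ref{prel} is exactly what makes it possible.
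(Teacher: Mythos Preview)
Your overall strategy for the regular and equator cases matches the paper's: separate out the $\varphi$ variable and reduce to a one-dimensional semiclassical Schr\"odinger problem, then read off $L^\infty$ bounds from turning-point analysis. The execution differs in detail. You invoke the Langer/Airy conjugation and Hermite-function asymptotics directly, whereas the paper packages the turning-point analysis once and for all into Proposition~\ref{l:well-est} (via Lagrangian distributions and the quantization condition of Proposition~\ref{l:well-qc}) and, for the equator, uses the unitary rescaling $T_\beta f(\tilde r)=\beta^{1/4}f(\beta^{1/2}\tilde r)$ with $\beta=r^2$ or $\beta=|g(0)-\alpha^2|$ to bring the problem back, uniformly in the parameters, to a non-degenerate one where Proposition~\ref{l:well-est} applies with $h_\beta=h/\beta$. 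The rescaling trick is shorter than tracking Hermite asymptotics through the crossover regime, but both routes lead to~\eqref{e:equator-est-1}--\eqref{e:equator-est-2}.

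The pole case is where your approach and the paper's genuinely diverge. You propose the radial substitution $w=r^{1/2}v$, which leaves you with the singular one-dimensional operator $h^2D_r^2+(\alpha^2-h^2/4)r^{-2}-g(r^2)$, and then want to run Airy/Bessel parametrices. This is the classical route and can be carried out (it amounts to uniform asymptotics for $J_k$ as both the order $k=\alpha/h$ and the argument vary), but your appeal to Proposition~\ref{l:well-eig} to close it is misplaced: that proposition is stated for \emph{smooth} potentials satisfying the hypotheses of Section~\ref{s:well}, and the $r^{-2}$ singularity together with the $\alpha\to 0$ collapse of the barrier takes you outside its scope. The paper sidesteps the coordinate singularity entirely by staying in the smooth Cartesian chart $(x,y)$ near the pole: it treats $u$ as a two-dimensional Lagrangian distribution on the torus $\Lambda(\lambda,\alpha)=\{p=\lambda,\ q=\alpha\}$, parametrizes it by a phase $\Phi=y\eta-S(x,\eta;\lambda,\alpha)$ obtained from Proposition~\ref{l:generating}, and then rescales $(r,\alpha)=(s\tilde r,s\tilde\alpha)$, $\tilde h=h/s$, to obtain an oscillatory integral with a one-dimensional $\eta$-phase that, after the substitution $\eta=\sqrt{\lambda g(0)}\sin\theta$, reduces to $\tilde r\sqrt{\lambda g(0)}\cos(\theta-\varphi)+\tilde\alpha\theta+\mathcal O(s)$ and hence satisfies the third-derivative hypothesis of Proposition~\ref{l:3rd-derivative-estimate} uniformly. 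So your radial approach is more elementary in spirit but requires separate uniform-Bessel input that the preliminaries in Section~\ref{prel} do not supply; the paper's two-dimensional Lagrangian route stays entirely within the FIO/van der Corput framework already assembled.
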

\medskip\noindent\textbf{Remark.}
The metric of a surface of revolution can be  brought locally  to
the form~\eqref{e:regular-band} or~\eqref{e:pole-band}, with $g(0)>0$,
with no conditions on the derivatives of $g$. Indeed, away from the
poles (points on the surface lying on the axis of rotation), the
metric has the form
$$
(1+\tilde g'(\tilde r)^2)d\tilde r^2+\tilde g(\tilde r)^2\,d\varphi^2,
$$
where $\tilde r$ is the projection onto the axis of rotation and
$\tilde g$ is a positive function giving the profile of the surface.
Making a change of variables $\tilde r\to r$ with $dr/d\tilde r=\tilde
g(\tilde r)^{-1}\sqrt{1+\tilde g'(\tilde r)^2}$, we bring the metric
to the form~\eqref{e:regular-band}. The case of a pole is handled
similarly. As $(\tilde r,\varphi)$ we can use the geodesic
polar coordinates with respect to the pole and a different change of
variables $\tilde r\to r$.
\medskip

We obtain the following corollary for convex surfaces of revolution. 
The estimate is the analogue of the estimate \eqref{eq:ts1} in the
case of the sphere:

\begin{cor}
\label{cor2}
Suppose that $  M $ is a strictly convex surface of revolution
parametrized by $ ( r, \varphi ) \in [ r_-, r_+ ] \times [0 , 2 \pi )
$, with the metric $dr^2+a(r)\,d\varphi^2$ as in the discussion preceeding
Proposition~\ref{theo:sphere}.
  Suppose that 
\begin{equation}
\label{eq:cond}
 ( \Delta - \lambda  ) u  = 0 , \ \
  ( \textstyle{\frac 1 i } \partial_\varphi - k ) u  = 0 ,\
\|u\|_{L^2(M)}=1. 
\end{equation}
Then, 
\begin{equation}
\label{eq:boundsr}
a ( r )^{1/3} | r - r_0 |^{1/6} |u ( r, \varphi) | \leq C 
\lambda^{1/12} . 
\end{equation}
\end{cor}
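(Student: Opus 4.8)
\medskip
\noindent\textbf{Proof idea.}
The plan is to put~\eqref{eq:cond} into the semiclassical framework of Theorem~\ref{t:1} and then glue together its three local bounds. Set $h=\lambda^{-1/2}$ and $\alpha=hk$. Then $(-h^2\Delta-1)u=0$, $(hD_\varphi-\alpha)u=0$, and $\|u\|_{L^2}=1$, so \eqref{e:original-cond-1}--\eqref{e:original-cond-3} hold, while \eqref{e:original-cond-4} holds because a Laplace eigenfunction is microlocalized on the compact characteristic set $\{|\xi|_g=1\}$, as recorded in the Remark following~\eqref{e:original-cond-4}. To see that $\alpha$ ranges in a fixed compact set as $\lambda\to\infty$, write $u=v(r)e^{ik\varphi}$ and pair $-\Delta u=\lambda u$ with $u$ in $L^2(M)$:
\[
\lambda=2\pi\int_{r_-}^{r_+}\Big(|v'|^2+\frac{k^2}{a}|v|^2\Big)\sqrt{a}\,dr\ \ge\ \frac{k^2}{\max_r a(r)},
\]
so that $\alpha^2=k^2/\lambda\le a(r_0)$. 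For $\lambda$ below any fixed bound there are only finitely many such $u$ up to a phase and \eqref{eq:boundsr} is automatic, so we may assume $h$ small.

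Next I would cover $M$ by finitely many rotation-invariant bands adapted to Theorem~\ref{t:1}: two \emph{pole} bands around $r=r_\pm$ (where $a$ vanishes), one \emph{elliptic equator} band around $r=r_0$ (where $a$ has its nondegenerate maximum), and finitely many \emph{regular} bands covering $\{r_-+\delta\le r\le r_+-\delta\}\setminus\{|r-r_0|<\delta\}$ (where $a'\ne 0$). By the Remark after Theorem~\ref{t:1}, a radial change of variables brings the metric in each band to the normal form~\eqref{e:regular-band} or~\eqref{e:pole-band}; one checks that the conformal factor $g$ has $g'(0)\ne 0$ on a regular band and $g'(0)=0$, $g''(0)<0$ on the equator band, the last two inherited from $a'(r_0)=0$, $a''(r_0)<0$. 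Taking a partition of unity $\{\chi_j\}$ subordinate to this cover with $D_\varphi\chi_j=0$, Proposition~\ref{l:cutoff} guarantees that each $\chi_j u$ again satisfies \eqref{e:original-cond-1}--\eqref{e:original-cond-4}, so Theorem~\ref{t:1} applies to every $\chi_j u$; since $|u|\le\sum_j|\chi_j u|$, it is enough to establish \eqref{eq:boundsr} for each piece.

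It then remains to convert the local bounds into the global weighted bound, recalling $\lambda^{1/12}=h^{-1/6}$. On a regular band $a(r)$ and $|r-r_0|$ are bounded above and below, so $a(r)^{1/3}|r-r_0|^{1/6}\le C$ and \eqref{e:regular-est} gives \eqref{eq:boundsr} immediately. On the equator band $a(r)\asymp 1$ and the local coordinate is comparable to $r-r_0$, so \eqref{eq:boundsr} follows from
\[
|r-r_0|^{1/6}\min\big(h^{-1/4},\,h^{-1/6}|r-r_0|^{-1/6}\big)=\min\big(h^{-1/4}|r-r_0|^{1/6},\,h^{-1/6}\big)\le h^{-1/6}:
\]
one uses the $h^{-1/4}$ bound for $|r-r_0|\lesssim h^{1/2}$ and the $h^{-1/6}|r-r_0|^{-1/6}$ bound otherwise; neither the $|g(0)-\alpha^2|^{-1/12}$ term nor estimate~\eqref{e:equator-est-2} is needed. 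On a pole band $|r-r_0|\asymp 1$, while $a(r)$ vanishes at precisely the rate of the local radial coordinate appearing in~\eqref{e:pole-est-1}, and the analogous computation — the $h^{-1/2}$ bound near the pole, the $h^{-1/6}r^{-1/3}$ bound away from it — gives $a(r)^{1/3}|u|\le Ch^{-1/6}$, again without~\eqref{e:pole-est-2} or the $|\alpha|^{-1/3}$ term. Summing over the finite cover yields~\eqref{eq:boundsr}.

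Given Theorem~\ref{t:1}, the argument is essentially routine; the hardest part — though not a genuine obstacle — will be the bookkeeping in the last paragraph: performing the radial changes of variables that bring the metric of a convex surface of revolution to the normal forms~\eqref{e:regular-band} and~\eqref{e:pole-band}, and checking that the resulting local coordinate and conformal factor are comparable, with exactly the exponents needed, to $r-r_0$ near the equator and to the geodesic distance to the pole near a pole, so that the competing $h$-powers inside the minima of~\eqref{e:equator-est-1} and~\eqref{e:pole-est-1} collapse to the single power $h^{-1/6}=\lambda^{1/12}$.
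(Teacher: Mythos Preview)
The paper does not spell out a proof of Corollary~\ref{cor2}; it is presented as a direct consequence of Theorem~\ref{t:1}, and your proposal correctly supplies exactly the routine derivation the authors leave implicit: put $h=\lambda^{-1/2}$, $\alpha=hk$, cover $M$ by pole/equator/regular bands, use Proposition~\ref{l:cutoff} and the Remark after Theorem~\ref{t:1} to localize and normalize the metric, and then check that in each band the weight $a(r)^{1/3}|r-r_0|^{1/6}$ absorbs the local $L^\infty$ bound down to $h^{-1/6}$. Your bookkeeping (crossover at $\rho\sim h$ near a pole, at $|r-r_0|\sim h^{1/2}$ near the equator, and the observation $a(r)\asymp\rho$ in the pole chart) is correct, so the approach matches the paper's intended one.
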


The Riemannian volume measure on $ M$ is given by $  a ( r ) dr d \varphi $
which means that the sampling measure based on \eqref{eq:boundsr}
should be given by 
\[         \left( \frac{ a ( r ) } { | r - r_0 | } \right)^{1/3} dr d
\theta ; \]
here $ a ( r ) / ( r-r_0 ) $ is the replacement for $ \tan \theta $. 
The constant $ K $ in \eqref{eq:QkK} is given by 
\[   K \simeq N^{1/12} . \]

Theorem~\ref{t:1} does not cover the case of a band with metric of the
form~\eqref{e:regular-band} and $g'(0)=0$, $g''(0)\leq 0$; in
particular, it does not apply to the case of a hyperbolic equator,
when $g''(0)<0$. This case does not occur for convex surfaces
considered here.

Let us give an informal explanation of the estimates in
Theorem~\ref{t:1}.  By considering an eigenfunction decomposition of
$u$, we can reduce to the case when $u$ is an exact joint
eigenfunction of $-h^2\Delta$ and $hD_\varphi$, rather than a function
satisfying~\eqref{e:original-cond-2}. Using semiclassical analysis
(see~\cite{e-z} for the general theory and the references below for
specific facts we will be using), we can relate the behavior of the
`quantum' object $u$ for small values of the `Planck constant' $h$ to
the corresponding `classical' integrable Hamiltonian system given by
the principal symbols $p$ and $q$ of $-h^2\Delta$ and $hD_\varphi$,
respectively.  The principal symbol of a differential operator is a
polynomial (on each fiber) function on the cotangent bundle $T^*M$,
obtained formally by replacing each instance of $hD_{x_j}$ by the
corresponding momentum $\xi_j$, and then discarding the terms of
higher order in $h$.  In our situation, $p$ is the square of the norm
induced by $g$ on the cotangent bundle and $q$ is the momentum
corresponding to $\varphi$.  The function $u$ will then be
concentrated, or microlocalized, on the set $$
\Lambda=\{p=1,\ q=\alpha\}\subset T^*M.
$$
One can in fact approximate $u$ by certain explicit highly oscillating
integral expressions up to an $\mathcal O(h^\infty)$ error; our
analysis would consist of studying the asymptotic behavior of these
integrals as $h\to 0$.  The set $\Lambda$ consists of unit geodesics
with prescribed angular momentum (that is, of rotations of one such
geodesic); there are two possibilities:
\begin{enumerate}
\item $\Lambda$ is a Lagrangian torus;
\item $\Lambda$ is a circle corresponding to an equator.
\end{enumerate}
In case (1), $u$ is a Lagrangian distribution associated to
$\Lambda$~-- that is, it can be written as a finite sum of expressions
of the WKB form~\eqref{e:oi}, with $\Phi(x,\theta)$ locally
parametrizing the Lagrangian $\Lambda$ and $a$ some smooth symbol~--
see Section~\ref{s:lagr} for details.  The $L^\infty$ norm of $u$
corresponds to how well $\Lambda$ projects onto the base space $M$. At
a point where the tangent space of $\Lambda$ projects surjectively
onto $M$, the $L^\infty$ norm of $u$ is $\mathcal O(1)$. The only
other possibility that could arise in the regular case is a turning
point; that is, a point where the function $r$ has a nondegenerate
critical point when restricted to a geodesic (for the sphere, these
are the points of maximal and minimal latitude on a given great
circle). The behavior of $u$ near the turning points is similar to
that of the Airy function, and its $L^\infty$ norm is of order
$h^{-1/6}$ by a variant of Van der Corput's lemma.

If one is unfamiliar with Lagrangian distributions, the simple model
case to consider would be the eigenfunctions of the Laplacian
$h^2D_x^2$ on the circle $\mathbb S^1=\mathbb R/(2\pi \mathbb
Z)$. Those are given by $e^{i\lambda x/h}$, with the eigenvalue
$\lambda^2$.  (Note that only a discrete set of $\lambda$ is possible
here~-- this is a baby version of the quantization condition mentioned
in the next section.)  The corresponding symbol is $p(x,\xi)=\xi^2$
and the corresponding Lagrangian would be $\{x\in \mathbb S^1,\
\xi=\lambda\}$; it projects surjectively onto the $x$ variables, which
corresponds to the fact that we do not need any integration variables
$\theta$ in the formula~\eqref{e:oi} to define eigenfunctions, and to
the fact that the $L^\infty$ norm of eigenfunctions is bounded by a
constant.

For $\alpha=0$, a new kind of problem arises~-- the intersection of
$\Lambda$ with the fiber of $T^*M$ at a pole is a not a point, but a
circle consisting of all unit covectors at the pole, leading to a loss
of $h^{-1/2}$ in the $L^\infty$ norm. This problem disappears and we
get back the $h^{1/6}$ estimate if either $\alpha$ is away from zero
or we are away from the pole, which is reflected
in~\eqref{e:pole-est-1}. If $\alpha=0$, the turning points are located
at the poles; therefore, away from the poles we get an $\mathcal O(1)$
estimate. The blow-up rate of this estimate as we approach a pole is
quantified by~\eqref{e:pole-est-2}.

In case (2) we can separate out the $\varphi$ variable (as $\Lambda$
does not pass through any poles) and obtain a one-dimensional problem;
then $u$ is a low-lying eigenfunction of a Schr\"odinger operator with
a potential well. The bottom of the well eigenfunctions (those with
$g(0)-\alpha^2=\mathcal O(h)$) are approximated by the Gaussian
$h^{-1/4}e^{-r^2/(2h)}$; we see that they are
$$\mathcal O(\min(h^{-1/4},r^{-1/2})) \,. $$
This explains the first part of~\eqref{e:equator-est-1}
and~\eqref{e:equator-est-2}. However, if $g(0)-\alpha^2$ is bounded
away from zero, we are away from the equator and thus back to case
(1), which explains the $h^{-1/6}$ term in~\eqref{e:equator-est-1}.

\section{Preliminaries}
\label{prel}

\subsection{Semiclassical Lagrangian distributions}\label{s:lagr}

In this subsection, we briefly review the local theory of
semiclassical Lagrangian distributions; see for example \cite{A},\cite[Chapter~6]{g-s},
\cite[Chapter~8]{g-s2} or~\cite[Section~2.3]{svn} for a detailed account,
and~\cite[Section~25.1]{h4} or~\cite[Chapter~11]{gr-s}
for the presentation in the closely related microlocal case.

Assume that $X$ is a manifold and $\Phi(x,\theta)$ is a smooth real-valued function
defined on an open set $U_\Phi\subset X\times \mathbb R^m$. Define
the critical set $C_\Phi\subset U_\Phi$ by
$$
(x,\theta)\in C_\Phi\iff \partial_\theta\Phi(x,\theta)=0.
$$
The function $\Phi$ is called
a (nondegenerate) phase function, if for each $(x,\theta)\in C_\Phi$,
the differentials $d(\partial_{\theta_1}\Phi),\dots,d(\partial_{\theta_m}\Phi)$
are linearly independent at $(x,\theta)$. If this is the case, the set
$$
\Lambda_\Phi=\{(x,\partial_x\Phi(x,\theta))\mid (x,\theta)\in C_\Phi\}\subset T^*X
$$
is an (immersed) Lagrangian submanifold. We say that $\Phi$ generates
$\Lambda_\Phi$; in general, if $\Lambda\subset T^*X$ is a Lagrangian
submanifold and $(x,\xi)\in\Lambda_\Phi\subset\Lambda$, then we say
that $\Phi$ generates $\Lambda$ near $(x,\xi)$. For each Lagrangian
submanifold $\Lambda$ and each $(x,\xi)\in\Lambda$, there exists a
phase function generating $\Lambda$ near $(x,\xi)$; however, such
phase function is not unique.

If $\Lambda\subset T^*X$ is an (embedded) Lagrangian submanifold,
$\Phi$ is a phase function generating $\Lambda$ near some point, and
$a(x,\theta;h)\in \CIc(U_\Phi)$ is bounded in $\CI_{x,\theta}$
uniformly in $h$, we can define the $h$-dependent family of smooth
functions
\begin{equation}\label{e:oi}
u(x;h)=h^{-m/2}\int e^{i\Phi(x,\theta)/h}a(x,\theta)\,d\theta.
\end{equation}
Here the factor $h^{-m/2}$ is chosen so that $\|u(x;h)\|_{L^2}$ is
bounded by a certain $\CI$ seminorm of $a$ and equivalent to the
$L^2$ norm of $a|_{C_\Phi}$, modulo ${\mathcal O}(h)$ terms.  We
call $u(x;h)$ a (semiclassical compactly microlocalized) Lagrangian
distribution associated to $\Lambda$. This family is microlocalized on
$\Lambda_\Phi$ in the following sense:
\[  b \in \CIc  (T^*  X) \,, \
b |_{\neigh (\Lambda_\Phi) } = 0
\ \Longrightarrow \ 
\|b^w(x,hD_x)u(x;h)\|_{L^2}={\mathcal O}(h^\infty)\,. \]
More generally, we call $u(x;h)$ a Lagrangian distribution associated
to $\Lambda$, if it is the sum of finitely many expressions of the
form~\eqref{e:oi}, for different phase functions parametrizing
$\Lambda$, and an ${\mathcal O}_{\CIc}(h^\infty)$ remainder.

If $u(x;h)$ is a Lagrangian distribution associated to $\Lambda$,
$\Phi$ is a phase function generating $\Lambda$, and $u$ is
microlocalized in a compact subset of $\Lambda_\Phi$, then we can
always write $u$ in the form~\eqref{e:oi} for some amplitude $a$
modulo an ${\mathcal O}(h^\infty)$ remainder. (The general case can
always be reduced to this one by a microlocal partition of unity, if
we know enough phase functions to cover the whole $\Lambda$.) In other
words, if two phase functions locally generate the same Lagrangian,
then oscillatory integrals~\eqref{e:oi} associated to one phase
function can be written in the form~\eqref{e:oi} using the other phase
function as well. However, the formulas relating even the principal
parts of the amplitudes corresponding to different phase functions are
quite complicated; obtaining a geometric interpretation for these
formulas is the subject of global theory of Lagrangian distributions.
This global theory is needed to produce quantization conditions that
we use below; however, as we are only interested in estimating the
resulting eigenfunctions and in some rough properties of the spectrum
given by quantization conditions, we do not use the global theory
directly.

\subsection{Specific generating functions and $L^\infty$ estimates}

In this subsection, we assume that $\Lambda\subset T^*\mathbb R^n$
is a Lagrangian submanifold and $(x^0,\xi^0)\in\Lambda$.
\begin{prop}\label{l:generating}
Take $0\leq m\leq n$ and denote $x=(x',x''),\xi=(\xi',\xi'')$,
where $x',\xi'\in \mathbb R^m$ and $x'',\xi''\in \mathbb R^{n-m}$.
Assume that $T_{(x^0,\xi^0)}\Lambda$ projects surjectively
onto the $(x',\xi'')$ variables. Then there exists a function
$S(x',\xi'')$ such that near $(x^0,\xi^0)$, $\Lambda$ is given by
$$
\{\xi'=-\partial_{x'}S(x',\xi''),\
x''=\partial_{\xi''}S(x',\xi'')\}.
$$
Consequently, $\Lambda$ is generated near $(x^0,\xi^0)$
by the phase function
$$
\Phi(x,\theta)=x''\cdot\theta-S(x',\theta),\
\theta\in \mathbb R^{n-m}.
$$
\end{prop}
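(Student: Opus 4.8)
The plan is to proceed in three steps: first use the transversality hypothesis to exhibit $\Lambda$ locally as a graph over the $(x',\xi'')$-plane, then produce the function $S$ by a Poincar\'e-lemma argument that uses the Lagrangian condition, and finally check directly that $\Phi(x,\theta)=x''\cdot\theta-S(x',\theta)$ is a nondegenerate phase function generating $\Lambda$. For the first step, note that since $\Lambda$ is Lagrangian we have $\dim\Lambda=n$, and the $(x',\xi'')$-plane has dimension $m+(n-m)=n$; thus the hypothesis that $T_{(x^0,\xi^0)}\Lambda$ projects \emph{onto} this plane forces the linear projection to be an isomorphism. By the inverse function theorem the map $(x,\xi)\mapsto(x',\xi'')$ restricts to a diffeomorphism from a neighbourhood of $(x^0,\xi^0)$ in $\Lambda$ onto a ball $B$ around the corresponding point of $\mathbb R^n$. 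Hence near $(x^0,\xi^0)$, $\Lambda=\{\xi'=f(x',\xi''),\ x''=g(x',\xi'')\}$ for smooth functions $f,g$ on $B$.

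For the second step, consider on $T^*\mathbb R^n$ the $1$-form $\beta=-\xi'\cdot dx'+x''\cdot d\xi''$. A one-line computation gives $d\beta=-\omega$, where $\omega=\sum_{j}d\xi_j\wedge dx_j$ is the symplectic form. Since $\Lambda$ is Lagrangian, $\omega|_\Lambda=0$, so $\beta|_\Lambda$ is closed; as its local model from the first step is the ball $B$, the Poincar\'e lemma yields $S\in\CI(B)$ with $\beta|_\Lambda=dS$. Writing $\beta|_\Lambda$ in the coordinates $(x',\xi'')$ gives $\beta|_\Lambda=-f\,dx'+g\,d\xi''$, while $dS=\partial_{x'}S\,dx'+\partial_{\xi''}S\,d\xi''$; comparing coefficients of $dx'$ and $d\xi''$ yields $f=-\partial_{x'}S$ and $g=\partial_{\xi''}S$, which is precisely the asserted description of $\Lambda$.

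For the third step, take $\Phi(x,\theta)=x''\cdot\theta-S(x',\theta)$ with $\theta\in\mathbb R^{n-m}$. Then $\partial_\theta\Phi=x''-\partial_\theta S(x',\theta)$, so $C_\Phi=\{x''=\partial_{\xi''}S(x',\theta)\}$, and on $C_\Phi$ one has $\partial_{x'}\Phi=-\partial_{x'}S(x',\theta)$ and $\partial_{x''}\Phi=\theta$. Hence along $\Lambda_\Phi$ we get $\xi''=\theta$, and therefore $\Lambda_\Phi=\{\xi'=-\partial_{x'}S(x',\xi''),\ x''=\partial_{\xi''}S(x',\xi'')\}$, which coincides with $\Lambda$ near $(x^0,\xi^0)$ by the second step. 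Nondegeneracy of $\Phi$ is immediate: $d(\partial_{\theta_k}\Phi)=dx''_k-\sum_j(\partial^2_{x'_j\theta_k}S)\,dx'_j-\sum_l(\partial^2_{\theta_l\theta_k}S)\,d\theta_l$, and the coefficients of $dx''_1,\dots,dx''_{n-m}$ in these $n-m$ one-forms form the identity matrix, so the forms are linearly independent at every point of $C_\Phi$.

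None of the individual computations is difficult; the one piece of genuine content is the identity $d\beta=-\omega$ in the second step, which is what converts the Lagrangian hypothesis into the existence of a \emph{single} generating function $S$ rather than merely the pair $f,g$ produced by the first step. If $\Lambda$ is only immersed rather than embedded near $(x^0,\xi^0)$, one replaces it by an embedded piece through that point before running the argument, since the statement is purely local.
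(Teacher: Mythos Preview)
Your proof is correct and follows essentially the same route as the paper: write $\Lambda$ locally as a graph over $(x',\xi'')$, observe that the $1$-form $x''\,d\xi''-\xi'\,dx'$ is closed on $\Lambda$ because its exterior derivative is (minus) the symplectic form, and apply the Poincar\'e lemma to produce $S$. The only difference is that you also spell out the ``Consequently'' clause---verifying nondegeneracy of $\Phi$ and that $\Lambda_\Phi=\Lambda$---which the paper leaves implicit.
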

\begin{proof}
For the reader's convenience we recall the well known argument.
We can write $\Lambda$ locally as a graph $\{\xi'=-F(x',\xi''),\
x''=G(x',\xi'')\}$.  The restriction of the symplectic form
$d\xi\wedge dx$ to $\Lambda$ is zero; therefore, the restriction of
the 1-form $\alpha=x''\,d\xi''-\xi'\,dx'$ to $\Lambda$ is closed.
Therefore, there exists a function $S(x',\xi'')$ such that $\alpha=dS$
when restricted to $\Lambda$. However, if we use $(x',\xi'')$ as a
coordinate system on $\Lambda$, then $\alpha=G\,d\xi''+F\,dx'$ and
$dS=\partial_{\xi''}S\,d\xi''+\partial_{x'}S\,dx'$; it follows that
$F=\partial_{x'}S$ and $G=\partial_{\xi''}S$.
\end{proof}
In one special case $\Lambda$ can be locally parametrized by $x$; the
corresponding Lagrangian distributions satisfy the best $L^\infty$
estimate possible:
\begin{prop}\label{l:horizontal}
Assume that the tangent space to $\Lambda$ at each point projects
surjectively onto the $x$ variables. Then, each Lagrangian
distribution $u(x;h)$ associated to $\Lambda$ satisfies
$\|u(x;h)\|_{L^\infty}={\mathcal O}(1)$.
\end{prop}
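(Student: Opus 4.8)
The plan is to use a microlocal partition of unity to reduce $u$ to a single oscillatory integral \emph{without} integration variables, for which the bound is immediate.

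First I would observe that since $\dim\Lambda=n$ and the differential of the base projection $\pi:T^*\mathbb R^n\to\mathbb R^n$ is surjective at every point of $\Lambda$, $\pi|_\Lambda$ is a local diffeomorphism; in particular each point of $\Lambda$ has a neighborhood $V\subset T^*\mathbb R^n$ such that $\pi$ maps $\Lambda\cap V$ diffeomorphically onto an open subset of $\mathbb R^n$. Applying Proposition~\ref{l:generating} with $m=n$ (so that the $x''$, $\xi'$ and $\theta$ variables are all absent), near each $(x^0,\xi^0)\in\Lambda$ there is a function $S=S(x)$ with $\Lambda=\{\xi=-\partial_xS(x)\}$ locally, i.e. $\Lambda$ is generated there by the phase function $\Phi(x)=-S(x)$ with zero integration variables. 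The model oscillatory integral \eqref{e:oi} attached to such a $\Phi$ is simply $e^{-iS(x)/h}a(x;h)$ with $a\in\CIc$ bounded in $\CI$ uniformly in $h$, hence of modulus $\mathcal O(1)$.

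To globalize, recall that a Lagrangian distribution $u$ associated to $\Lambda$ is compactly microlocalized, so its semiclassical wavefront set is a compact subset $K\subset\Lambda$. Cover $K$ by finitely many open sets $V_j\subset T^*\mathbb R^n$ of the above type and choose $\chi_j\in\CIc(V_j)$ with $\sum_j\chi_j=1$ near $K$. Then $u=\sum_j\chi_j^w(x,hD_x)u+\mathcal O_{\CIc}(h^\infty)$, and each $\chi_j^w u$ is again a Lagrangian distribution associated to $\Lambda$, now microlocalized in a compact subset of $\Lambda_{\Phi_j}$, where $\Phi_j(x)=-S_j(x)$ is the $\theta$-free generating function produced above on $\pi(\Lambda\cap V_j)$. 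By the change-of-phase-function principle recalled at the end of Section~\ref{s:lagr}, $\chi_j^w u$ can be rewritten in the form \eqref{e:oi} with phase $\Phi_j$ modulo $\mathcal O_{\CIc}(h^\infty)$, that is $\chi_j^w u=e^{-iS_j(x)/h}a_j(x;h)+\mathcal O_{\CIc}(h^\infty)$ with $a_j$ bounded in $\CI$ uniformly in $h$; in particular $\|\chi_j^w u\|_{L^\infty}=\mathcal O(1)$. Summing over the finitely many $j$ gives $\|u\|_{L^\infty}\leq\sum_j\|\chi_j^w u\|_{L^\infty}+\mathcal O(h^\infty)=\mathcal O(1)$.

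The only genuinely nontrivial input is the second step: that a Lagrangian distribution microlocalized in $\Lambda_{\Phi_j}$ admits a representation \eqref{e:oi} with that particular generating function $\Phi_j$ — equivalently, that one can pass between two phase functions generating the same Lagrangian at the level of oscillatory integrals, which amounts to eliminating by stationary phase whatever extra integration variables appear in the original WKB representation of $u$. This is exactly the statement quoted (without proof) in Section~\ref{s:lagr}, so for the purposes of this proof it may be invoked directly; everything else is bookkeeping with the partition of unity together with the standard fact that compactly microlocalized pseudodifferential operators preserve the class of Lagrangian distributions associated to $\Lambda$.
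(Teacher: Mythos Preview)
Your proof is correct and follows essentially the same route as the paper's. The paper's argument is simply the terse version of yours: it applies Proposition~\ref{l:generating} with $x'=x$, $x''=\emptyset$ to get the $\theta$-free phase $\Phi(x)=-S(x)$ and then observes that $e^{-iS(x)/h}a(x;h)$ is trivially $\mathcal O(1)$, leaving the partition-of-unity reduction and the change-of-phase-function step implicit in the general framework already set up in Section~\ref{s:lagr}.
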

\begin{proof}
Putting $x'=x$ and $x''=\emptyset$ in Proposition~\ref{l:generating},
we get a phase function $\Phi(x)=-S(x)$.  The expression~\eqref{e:oi}
for this phase function has the form $e^{-iS(x)/h}a(x;h)$ and is
trivially ${\mathcal O}(1)$ in $L^\infty$.
\end{proof}
If $\Lambda$ does not satisfy the condition of
Proposition~\ref{l:horizontal}, then the associated Lagrangian
distributions can have $L^\infty$ norm as large as
$h^{-n/2}$. However, in some cases we are still able to find a phase
function satisfying some additional conditions that will ensure a
better bound.  We will in particular use the following estimate for
the case when $\theta$ is one-dimensional:
\begin{prop}\label{l:3rd-derivative-estimate}
Assume that $\Phi(x,\theta)$, $(x,\theta)\in U\subset \mathbb R^n\times \mathbb R$,
is a phase function and
$$
|\partial_\theta\Phi(x,\theta)|+|\partial_\theta^2\Phi(x,\theta)|
+|\partial_\theta^3\Phi(x,\theta)|>0\text{ for each }
(x,\theta)\in U.
$$
Then each oscillatory integral $u(x;h)$ of the form~\eqref{e:oi} satisfies
$$
\|u\|_{L^\infty}={\mathcal O}(h^{-1/6}).
$$
\end{prop}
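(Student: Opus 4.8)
The plan is to reduce the bound on $\|u\|_{L^\infty}$ to a pointwise (in $x$) estimate of the one-dimensional oscillatory integral
$$
u(x;h)=h^{-1/2}\int e^{i\Phi(x,\theta)/h}a(x,\theta)\,d\theta,
$$
and then to apply the classical Van der Corput lemma. First I would fix $x$ and regard the integral purely as a function of $\theta$, with $a(x,\cdot)$ supported in a fixed compact interval $I$ (uniformly in $h$, by the hypothesis $a\in\CIc(U_\Phi)$) and bounded in $\CI_\theta$. The hypothesis $|\partial_\theta\Phi|+|\partial_\theta^2\Phi|+|\partial_\theta^3\Phi|>0$ on $U$, together with compactness, gives a uniform lower bound: there is $\delta>0$ so that for each $(x,\theta)\in\supp a$, at least one of $|\partial_\theta\Phi|$, $|\partial_\theta^2\Phi|$, $|\partial_\theta^3\Phi|$ exceeds $\delta$.

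Next I would cover the $\theta$-interval by finitely many subintervals on each of which one of the three derivatives is bounded below (say by $\delta/2$), using a continuity/compactness argument; the number of such subintervals can be taken bounded uniformly in $x$ and $h$ because the bounds on $\Phi$ and its derivatives are uniform on $\supp a$. On a subinterval where $|\partial_\theta^k\Phi|\geq\delta/2$ for some $k\in\{1,2,3\}$, Van der Corput's lemma (in the form where one also controls $\partial_\theta^{k+1}\Phi$ when $k=1$, to handle monotonicity of $a$) gives
$$
\Bigl|\int e^{i\Phi/h}a\,d\theta\Bigr|\leq C_k\,h^{1/k}
$$
with $C_k$ depending only on $\delta$, the $\CI$-norms of $\Phi$ and $a$, and the length of $I$; summing the finitely many pieces the worst exponent is $k=3$, yielding $|u(x;h)|\leq C h^{-1/2}\cdot h^{1/3}=Ch^{-1/6}$. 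Taking the supremum over $x$ — noting the constant $C$ did not depend on $x$ — finishes the proof.

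The main obstacle is organizing the partition of the $\theta$-line so that the number of subintervals, and hence the accumulated constant, stays bounded uniformly in $x$; this is where one must use that the phase function is nondegenerate on a fixed compact set and that $a$ has $x$-uniform compact support in $\theta$. A secondary technical point is the precise statement of Van der Corput used when only the first derivative is large — there one needs $a(x,\cdot)$ to be of bounded variation on the subinterval, which follows from the uniform $\CI_{x,\theta}$ bound on $a$. Both of these are routine once the uniformity is set up, so I would state the Van der Corput lemma with its hypotheses explicitly and then verify them from the standing assumptions.
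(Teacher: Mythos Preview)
Your proposal is correct and matches the paper's primary argument, which simply cites Van der Corput's lemma as in Stein, Proposition~VIII.2. The paper also sketches an alternative proof via stationary phase on the region where $|\partial_\theta\Phi|+|\partial_\theta^2\Phi|>0$ and H\"ormander's Airy-function asymptotics (Theorem~7.7.18 of~\cite{h1}) near points where both vanish, but this is offered as a refinement giving more precise information at the caustic rather than as the main route.
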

\begin{proof}
This follows from the van der Corput's Lemma as presented
in \cite[Proposition VIII.2]{Stein}. We can also argue
using more precise asymptotics which are relevant when 
precise information at the caustic is needed:
If the amplitude $a(x,\theta;h)$ is supported in a region where
$|\partial_\theta\Phi|+|\partial_\theta^2\Phi|>0$, then we can apply
the stationary phase method 
to~\eqref{e:oi} to get $\|u\|_{L^\infty}={\mathcal
O}(1)$.  (Alternatively, the corresponding piece of the Lagrangian
$\Lambda_\Phi$ will satisfy the condition of
Proposition~\ref{l:horizontal}.) If at  $ (x_0, \theta_0
) $ we have  $ \partial_\theta \Phi ( x_0, \theta_0 ) =
\partial^2_\theta \Phi ( x_0, \theta_0 ) = 0 $ and $ \partial^3_\theta
\Phi ( x_0 , \theta_0 ) \neq 0 $, then we can apply \cite[Theorem
7.7.18]{h1} which shows that for $ a $ supported near $ (x_0 ,
\theta_0 ) $, there exist smooth functions $a_1,a_2,b$ such that
$a_1,b$ are real-valued and
\[  h^{-\frac 12} \int  e^{ i \Phi  ( x , \theta ) /
  h } a ( x, \theta ) \, d \theta = h^{-1/6} e^{ i b ( x ) / h } Ai ( h^{-\frac23} a_1 ( x
) ) a_2 ( x ) + {\mathcal O} ( h^{\frac16} ) \,. \]
Since the Airy function $Ai$ is bounded, this expression is $\mathcal
O(h^{-1/6})$.  A partition of unity argument finishes the proof of the
proposition.
\end{proof}

\subsection{$L^\infty$ estimates for potential wells}\label{s:well}

Assume that $V(x)$, $x\in \mathbb R$, is a smooth real-valued potential such that:
\begin{itemize}
\item for each $N$, there exists a constant $C_N$ such that
$|\partial_x^N V(x)|\leq C_N(1+|x|^2)$ for all $x$;
\item there exists a constant $\varepsilon_V>0$ such that
$V(x)\geq\varepsilon_V |x|^2$ for all $x$;
\item $V(0)=V'(0)=0$, $V''(0)=2c^2$ for some $c>0$, and
$\pm V'(x)>0$ for $\pm x>0$.
\end{itemize}
Consider the semiclassical Schr\"odinger operator
$$
P(h)=h^2D_x^2+V(x);
$$
let $p(x,\xi)=\xi^2+V(x)$ be the corresponding principal symbol.
Since $p(x,\xi)\to\infty$ as $|x|+|\xi|\to\infty$, it
follows~\cite[Section~6.3]{e-z} that $P(h)$ is a self-adjoint operator
on $L^2(\mathbb R)$ with compact resolvent and therefore posesses a
complete orthonormal system of eigenfunctions.

For $\lambda>0$, $p$ does not have critical points on
$p^{-1}(\lambda)$ and thus produces a completely integrable
one-dimensional Hamiltonian system.  The following is a corollary of
the spectral theory of self-adjoint operators corresponding to quantum
completely integrable systems (see~\cite[Section~2.7]{g-s}
or~\cite[Theor\`eme~5.1.11]{svn}):
\begin{prop}\label{l:well-qc}
Let $K$ be a compact subset of $(0, \infty ) $. Then for $h$ small enough,
the eigenvalues
$\lambda$ of $P(h)$ in $K$ are simple and
given by the quantization condition
\begin{equation}\label{e:well-qc}
S(\lambda)=(2j+1)\pi h+{\mathcal O}(h^2),\
j\in \mathbb Z,\
\end{equation}
where $S(\lambda)$ is the action functional:
\[  S ( \lambda ) =  \oint_{ p ( x , \xi ) = \lambda } \xi\, dx \]
and the curve $ p^{-1} ( \lambda) $ is oriented clockwise.  In
particular, $S$ is smooth and $S'(\lambda)>0$ everywhere.  Moreover,
each $L^2$ normalized joint eigenfunction corresponding to an
eigenvalue $\lambda\in K$ is a Lagrangian distribution associated to
$p^{-1}(\lambda)$ (that is, it admits a parametrization by integrals
of the form~\eqref{e:oi} with estimates on the symbol and the
remainder uniform in $\lambda$).
\end{prop}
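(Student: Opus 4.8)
The plan is to prove the proposition by a direct WKB/Airy quasimode construction combined with a spectral‑counting argument, exploiting one‑dimensionality so that simplicity comes essentially for free; this is the ``by hand'' version of the quantum–completely–integrable machinery of the references cited right after the statement.

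First I would record the classical picture. For $\lambda$ in a compact set $K\subset(0,\infty)$, the assumptions on $V$ (nondegenerate well, $V(0)=V'(0)=0$, $\pm V'>0$ on $\pm x>0$, $V\geq\varepsilon_V x^2$) force $\{V\leq\lambda\}$ to be a bounded interval $[x_-(\lambda),x_+(\lambda)]$ with $x_-(\lambda)<0<x_+(\lambda)$ and $V'(x_\pm(\lambda))\neq 0$; since $\nabla p=(V',2\xi)$ never vanishes on $p^{-1}(\lambda)$, this set is a smooth embedded circle with exactly two nondegenerate turning points, depending smoothly on $\lambda$. Then $S(\lambda)=\oint_{p^{-1}(\lambda)}\xi\,dx=2\int_{x_-(\lambda)}^{x_+(\lambda)}\sqrt{\lambda-V(x)}\,dx$, and differentiating under the integral sign (the boundary contributions vanish because $\lambda-V$ does) gives $S'(\lambda)=\int_{x_-(\lambda)}^{x_+(\lambda)}(\lambda-V(x))^{-1/2}\,dx$, the period of the classical orbit; it is finite and strictly positive because nondegeneracy of the turning points makes $\lambda-V\sim c_\pm|x-x_\pm|$ there, so the integrand is integrable. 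Smoothness of $S$ near $K$ follows by absorbing each square‑root singularity with a change of variable $\lambda-V(x)=t^2$.

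Next I would build, for every $\lambda$ satisfying $S(\lambda)=(2j+1)\pi h$, an $L^2$‑normalized quasimode $v_{\lambda,h}$ with $\|(P(h)-\lambda)v_{\lambda,h}\|_{L^2}=\mathcal O(h^\infty)$, uniformly in $\lambda\in K$, which is manifestly a Lagrangian distribution associated to $p^{-1}(\lambda)$. In the classically allowed region away from the turning points this is ordinary WKB: approximate solutions are linear combinations of $(\lambda-V)^{-1/4}e^{\pm\frac ih\int\sqrt{\lambda-V}}$ times a classical symbol, i.e.\ expressions of the form~\eqref{e:oi} with $m=0$, on the portion of $p^{-1}(\lambda)$ projecting diffeomorphically to the base (cf.\ Proposition~\ref{l:horizontal}); in the forbidden regions one takes the exponentially decaying solution. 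Near each turning point I would use the Langer transformation — an exact change of variables conjugating $P(h)-\lambda$ to the Airy operator modulo $\mathcal O(h^2)$ — so that the solution there is $h^{-1/6}\mathrm{Ai}(h^{-2/3}a_1(x))a_2(x;h)+\mathcal O(h^{1/6})$, precisely an oscillatory integral~\eqref{e:oi} with a one‑dimensional phase of the type in Proposition~\ref{l:3rd-derivative-estimate}. Demanding that the solution decaying at $-\infty$ match, through both turning points, a scalar multiple of the solution decaying at $+\infty$ is a $2\times 2$ monodromy problem; solving it via the Airy connection formulas yields $\tfrac1h\int_{x_-(\lambda)}^{x_+(\lambda)}\sqrt{\lambda-V}=\big(j+\tfrac12\big)\pi+\mathcal O(h)$, and since $h^2D_x^2+V$ has vanishing subprincipal symbol the $\mathcal O(h)$ term drops out, giving $S(\lambda)=(2j+1)\pi h+\mathcal O(h^2)$.

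Finally I would upgrade quasimodes to eigenfunctions and close off. Since $S'>0$ on $K$, the quantization values are $\mathcal O(h)$‑separated, with consecutive spacing comparable to $2\pi h/S'(\lambda)$; applying the spectral projector of $P(h)$ onto an interval of length of order $h$ about each quantization value to $v_{\lambda,h}$ then produces, for each admissible $j$, an eigenvalue $\lambda_j$ with $S(\lambda_j)=(2j+1)\pi h+\mathcal O(h^\infty)$ and an eigenfunction agreeing with $v_{\lambda_j,h}$ modulo $\mathcal O_{L^2}(h^\infty)$ — hence itself a Lagrangian distribution associated to $p^{-1}(\lambda_j)$, with bounds uniform in $\lambda_j\in K$. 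Conversely a count of eigenvalues (Weyl's law, $\#\{\lambda\leq E\}=(2\pi h)^{-1}S(E)+\mathcal O(1)$, or Sturm oscillation) shows there are no further eigenvalues in $K$, and simplicity is automatic: an $L^2$ eigenfunction of a one‑dimensional Schr\"odinger operator is unique up to scalar by the Wronskian argument. The main obstacle is the turning‑point analysis — establishing the Langer conjugation and the Airy connection formulas with control uniform in $\lambda\in K$, and the bookkeeping required to see that the quantization error is genuinely $\mathcal O(h^2)$ rather than $\mathcal O(h)$ — all of which is otherwise packaged in the quantum–completely–integrable results cited after the proposition.
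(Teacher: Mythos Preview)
The paper does not prove this proposition itself: it is stated as a consequence of the spectral theory of quantum completely integrable systems, with a reference to \cite[Section~2.7]{g-s} and \cite[Th\'eor\`eme~5.1.11]{svn}. Your proposal supplies a self-contained one-dimensional argument in place of that citation, and the outline is sound: the classical analysis of $S(\lambda)$ and $S'(\lambda)>0$ is correct, the WKB/Langer--Airy construction with matching through both turning points is the standard route to Bohr--Sommerfeld with the Maslov shift, the Wronskian argument gives simplicity for free, and the spectral-projector plus Weyl-count argument closes the gap between quasimodes and genuine eigenpairs. This is essentially the one-dimensional special case of what the cited references package abstractly.

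One point to tighten. You assert that for $\lambda$ satisfying the \emph{leading-order} condition $S(\lambda)=(2j+1)\pi h$ you obtain a quasimode with error $\mathcal O(h^\infty)$. In fact the full matching condition has the form $S(\lambda)+h^2S_2(\lambda)+\cdots=(2j+1)\pi h$ (the $\mathcal O(h)$ correction vanishes, as you note, by vanishing subprincipal symbol), so at the leading-order $\lambda$ the quasimode error is only $\mathcal O(h^2)$. That already suffices for the quantization statement~\eqref{e:well-qc}, but for the second conclusion --- that the eigenfunction is a Lagrangian distribution with $\mathcal O(h^\infty)$ remainder --- you need the eigenfunction to agree with the quasimode modulo $\mathcal O(h^\infty)$, which in turn requires building the quasimode at a solution of the quantization condition carried to all orders (and then invoking the $\sim h$ eigenvalue spacing to control the spectral projection). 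This is a bookkeeping adjustment rather than a new idea, and once made the rest of your argument goes through unchanged.
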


\medskip
\noindent
\textbf{Remark.} The quantization condition~\eqref{e:well-qc}
is valid for the eigenvalues close to zero as well, if we add the
requirement that $j\geq 0$. This can be proved by conjugating $P(h)$
by a semiclassical Fourier integral operator to a function of the
harmonic oscillator; it follows from the analysis in~\cite{cdv} and
can be found for example in~\cite[Theor\`eme~5.2.4]{svn}.
\medskip

For $\lambda$ away from zero, we get the following
estimate on eigenfunctions of $P(h)$:
\begin{prop}\label{l:well-eig}
Let $K$ be a compact subset of $(0,\infty)$.
Then each
$L^2$-normalized eigenfunction $v_\lambda$ with eigenvalue $\lambda\in K$
satisfies
$$
\|v_\lambda\|_{L^\infty}={\mathcal O}(h^{-1/6}).
$$
\end{prop}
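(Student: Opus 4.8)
The plan is to exploit the structure of $v_\lambda$ as a Lagrangian distribution associated to the curve $\Lambda_\lambda:=p^{-1}(\lambda)\subset T^*\mathbb R$, furnished by Proposition~\ref{l:well-qc}, and to estimate the contribution of each piece of $\Lambda_\lambda$ by either Proposition~\ref{l:horizontal} or Proposition~\ref{l:3rd-derivative-estimate}. Since $p(x,\xi)=\xi^2+V(x)$ has its only critical point at $(0,0)$, where $p=0$, for $\lambda\in K\subset(0,\infty)$ the level set $\Lambda_\lambda$ is a smooth embedded closed curve (an oval, using $V(x)\geq\varepsilon_V x^2$); it is a graph $\xi=\pm\sqrt{\lambda-V(x)}$ over the interval $\{V(x)\leq\lambda\}$, joined at the two turning points $(x_\pm(\lambda),0)$ with $V(x_\pm)=\lambda$. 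As $\lambda>0=V(0)$ we have $x_\pm\neq0$, so $V'(x_\pm)\neq0$ by the hypothesis $\pm V'(x)>0$ for $\pm x>0$; by the implicit function theorem $x_\pm(\lambda)$ depend smoothly on $\lambda$, stay in a fixed compact set, and $|V'(x_\pm(\lambda))|$ is bounded below uniformly for $\lambda\in K$.

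First I would fix a finite open cover of $\Lambda_\lambda$: two small neighbourhoods $V_\pm$ of the turning points, together with finitely many sets contained in $\{\xi\neq0\}$. Using the parametrization of $v_\lambda$ by oscillatory integrals of the form~\eqref{e:oi} provided by Proposition~\ref{l:well-qc}, together with a microlocal partition of unity subordinate to this cover and the change-of-phase-function principle of Section~\ref{s:lagr}, I would write $v_\lambda=\sum_j u_j+\mathcal O_{\CIc}(h^\infty)$, where each $u_j$ is a Lagrangian distribution microlocalized in one cover element and the remainder is $\mathcal O(h^\infty)$ in $L^\infty$. For the pieces supported in $\{\xi\neq0\}$, the tangent space to $\Lambda_\lambda$ projects surjectively onto the $x$-variable, so Proposition~\ref{l:horizontal} gives $\|u_j\|_{L^\infty}=\mathcal O(1)$.

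For a piece supported near a turning point $(x_*,0)$, $x_*=x_\pm(\lambda)$, I would apply Proposition~\ref{l:generating} with $n=1$, $m=0$, $\theta=\xi$: there is $S(\theta)$ with $\Lambda_\lambda$ given near $(x_*,0)$ by $\{x=S'(\theta)\}$ and generated by $\Phi(x,\theta)=x\theta-S(\theta)$. Since $S'(\theta)=x(\theta)$ solves $\theta^2+V(x(\theta))=\lambda$, differentiating gives $x'(0)=0$ and $V'(x_*)x''(0)+2=0$, i.e. $S''(0)=0$ and $S'''(0)=-2/V'(x_*)\neq0$; hence $\partial_\theta^3\Phi=-S'''(\theta)\neq0$ in a neighbourhood of $(x_*,0)$. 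Taking $V_\pm$ small enough that the corresponding amplitude is supported in this neighbourhood, the hypothesis of Proposition~\ref{l:3rd-derivative-estimate} is met and $\|u_j\|_{L^\infty}=\mathcal O(h^{-1/6})$. Summing the finitely many contributions yields $\|v_\lambda\|_{L^\infty}=\mathcal O(h^{-1/6})$, uniformly in $\lambda\in K$ since the parametrization of Proposition~\ref{l:well-qc} and the quantities $x_\pm(\lambda)$, $V'(x_\pm(\lambda))$ are.

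The only non-bookkeeping point — and hence the main obstacle — is the computation $S'''(0)=-2/V'(x_*)\neq0$, which is precisely the nondegeneracy of the turning point and reduces to $V'(x_*)\neq0$; everything else is an assembly of the local Lagrangian-distribution machinery recalled in Section~\ref{prel}. A little care is needed to ensure that the partition of unity can be chosen with supports small enough that each turning-point piece lies where $|\partial_\theta^3\Phi|>0$, but this is automatic once $V_\pm$ are taken sufficiently small.
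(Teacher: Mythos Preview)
Your proposal is correct and follows essentially the same route as the paper: decompose $v_\lambda$ as a Lagrangian distribution on $p^{-1}(\lambda)$, apply Proposition~\ref{l:horizontal} away from the turning points, and at each turning point use the $\xi$-parametrization $\Phi(x,\theta)=x\theta-S(\theta)$ together with Proposition~\ref{l:3rd-derivative-estimate}. In fact you supply the explicit computation $S'''(0)=-2/V'(x_\pm)\neq 0$ that the paper only asserts (``We can calculate that $\partial_\xi^3 T(\xi)$ is nonvanishing''), as well as the uniformity-in-$\lambda$ discussion, so your write-up is if anything more complete.
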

\begin{proof}
\begin{figure}
\includegraphics{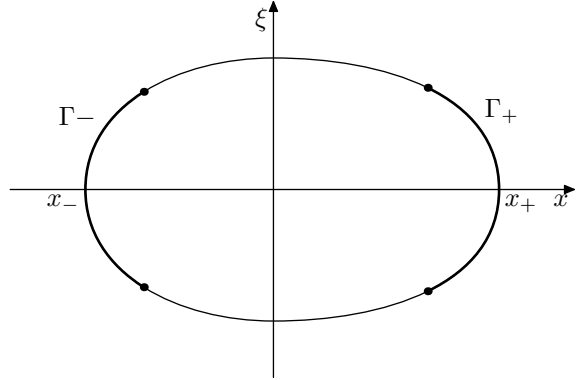}
\caption{The curve $p^{-1}(\lambda)$ and the regions $\Gamma_\pm$}
\end{figure}
By Proposition~\ref{l:well-qc}, $v_\lambda$ is a Lagrangian
distribution associated to the curve $p^{-1}(\lambda)$.  Let
$\Gamma_\pm$ be the intersections of $p^{-1}(\lambda)$ with small
neighborhoods of the turning points $(x_\pm(\lambda),0)$; here
$x_-(\lambda)<x_+(\lambda)$ are the two roots of the equation
$V(x)=\lambda$. Away from $\Gamma_\pm$, the curve $p^{-1}(\lambda)$
projects diffeomorphically onto the $x$ axis; therefore, by
Proposition~\ref{l:horizontal}, Lagrangian distributions associated to
$p^{-1}(\lambda)\setminus (\Gamma_+\cup\Gamma_-)$ have $L^\infty$ norm
${\mathcal O}(1)$.  It remains to consider the case of a Lagrangian
distribution associated to, say, $\Gamma_+$.  However, $\Gamma_+$ can
be parametrized by $\xi$ and thus by Proposition~\ref{l:generating} it
has a generating function of the form $\Phi(x,\xi)=x\xi-T(\xi)$. We
can calculate that $\partial_\xi^3 T(\xi)$ is nonvanishing; it remains
to apply Proposition~\ref{l:3rd-derivative-estimate}.
\end{proof}
Using Proposition~\ref{l:well-eig} together with elliptic estimates,
we get $L^\infty$ bounds on functions satisfying the analogue
of~\eqref{e:original-cond-2}:
\begin{prop}\label{l:well-est} 
Assume $\lambda\in \mathbb R$ lies in a fixed compact set $ K $ and
$u\in \mathcal \CIc(\mathbb R)$ satisfies%
\begin{equation}\label{e:well-ae}
\|u\|_{L^2}\leq C_0,\
\|(P(h)-\lambda)u\|_{L^2}\leq C_0 h,
\end{equation}
for some constant $C_0$. Then:
\begin{enumerate}
\item If $\chi\in \CIc(T^*\mathbb R)$ is equal to 1 near
$p^{-1}(\lambda)$, then
$$
\|(1-\chi^w(x,hD_x))u\|_{L^\infty}=\mathcal O_{\chi,\lambda}(1).
$$
In particular, if $\pi:T^* \mathbb R\to \mathbb R$ is the projection
map, then
\begin{equation}
\label{eq:eps}   x\not\in \pi ( p^{-1} ( \lambda))
\Longrightarrow \ 
| u ( x ) | = \mathcal O_{x,\lambda} ( 1 ) \,. \end{equation}
 The constants in $\mathcal O(\cdot)$ are locally uniform in
$x,\lambda$.

\item If $|\lambda|\geq \delta$ for some constant $\delta>0$, then
$$
\|u\|_{L^\infty}={\mathcal O}_\delta(h^{-1/6}).
$$
\item  If $|x|\geq \delta$ for some constant $\delta>0$, then
$$ | u ( x ) | ={\mathcal O}_\delta(h^{-1/6}). $$
\end{enumerate}
\end{prop}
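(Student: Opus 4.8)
The plan is to reduce all three estimates to the $\mathcal O(h^{-1/6})$ bound for \emph{exact} eigenfunctions supplied by Proposition~\ref{l:well-eig}, disposing of error terms by semiclassical elliptic estimates. Two elementary facts are used throughout: the one-dimensional Sobolev inequality $\|v\|_{L^\infty(\mathbb R)}^2\lesssim h^{-1}\|v\|_{L^2}\|v\|_{H^1_h}$, and the energy inequality $\|hv'\|_{L^2}^2\leq (P(h)v,v)_{L^2}$ (valid since $V\geq 0$), which applied to $v=u$ together with~\eqref{e:well-ae} gives $\|u\|_{H^1_h}=\mathcal O(1)$. Part~(1) is then the elliptic estimate: since $\chi\equiv 1$ near $p^{-1}(\lambda)$, the symbol $p-\lambda$ is elliptic on $\supp(1-\chi)$, and it is elliptic at infinity because $V(x)\geq\varepsilon_V|x|^2$; hence $\|(1-\chi^w(x,hD_x))u\|_{H^2_h}=\mathcal O(h)$, which the Sobolev inequality upgrades to an $\mathcal O(1)$ bound in $L^\infty$. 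For the pointwise statement, given $x_0$ with $V(x_0)>\lambda$ one picks $\chi$ supported in a small neighbourhood of $p^{-1}(\lambda)$ disjoint from $\{|x-x_0|<\rho\}\times\mathbb R$, together with $\psi\in\CIc(\mathbb R)$, $\psi(x_0)=1$, $\supp\psi\subset\{|x-x_0|<\rho\}$; then $\psi\,\chi^w(x,hD_x)$ is smoothing of order $h^\infty$, so $\psi u=\psi(1-\chi^w(x,hD_x))u+\mathcal O_{\CI}(h^\infty)$ and $|u(x_0)|=\mathcal O_{x_0,\lambda}(1)$.

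For part~(2) we may assume $\lambda\geq\delta$ (if $\lambda<0$, then $P(h)\geq 0$ gives $\|u\|_{L^2}=\mathcal O(h)$, and the bound is immediate from $\|u\|_{H^1_h}=\mathcal O(h)$). Expand $u=\sum_j c_jv_{\lambda_j}$ in the eigenbasis of $P(h)$ and organize this sum dyadically in the spectrum: let $u_0$ be the spectral projection of $u$ onto $[\lambda-h,\lambda+h]$ and $u_k$ its projection onto $\{E:2^{k-1}h\leq|E-\lambda|<2^kh\}$ for $k\geq1$. From~\eqref{e:well-ae}, $\sum_j(\lambda_j-\lambda)^2|c_j|^2\leq C_0^2h^2$, hence $\|u_0\|_{L^2}\leq C_0$ and $\|u_k\|_{L^2}\lesssim 2^{-k}$; and the quantization condition~\eqref{e:well-qc} with $S'>0$ shows that on a fixed compact range the eigenvalues of $P(h)$ are spaced $\asymp h$, so the $k$-th shell holds $\lesssim 2^k$ of them when $2^kh$ is bounded. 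For the shells with $2^kh\leq\delta/4$, those eigenvalues lie in a fixed compact subset of $(0,\infty)$, so $\|v_{\lambda_j}\|_{L^\infty}\lesssim h^{-1/6}$ by Proposition~\ref{l:well-eig}, and Cauchy--Schwarz over the $\lesssim 2^k$ indices gives $\|u_k\|_{L^\infty}\lesssim 2^{k/2}\cdot 2^{-k}\cdot h^{-1/6}=2^{-k/2}h^{-1/6}$. For the remaining shells ($2^kh>\delta/4$) one has $\|u_k\|_{L^2}=\mathcal O(h)$, and the energy inequality $\|hu_k'\|_{L^2}^2\leq(P(h)u_k,u_k)_{L^2}\leq(\lambda+2^kh)\|u_k\|_{L^2}^2$ combined with the Sobolev inequality shows these contribute $\mathcal O(1)$ to $\|u\|_{L^\infty}$ in total. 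Summing the geometric series gives $\|u\|_{L^\infty}\lesssim h^{-1/6}$.

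For part~(3), fix $x_0$ with $|x_0|\geq\delta$ and put $c_\delta=\min_{|x|=\delta}V(x)>0$, so $V(x_0)\geq c_\delta$. If $\lambda\geq c_\delta/4$, part~(2) already gives $\|u\|_{L^\infty}=\mathcal O_\delta(h^{-1/6})$, and if $\lambda<0$ we are done as in part~(2); so assume $0\leq\lambda<c_\delta/4\leq V(x_0)/4$. Split $u=u_{\mathrm{low}}+u_{\mathrm{high}}$ with $u_{\mathrm{low}}$ the spectral projection of $u$ onto $[0,V(x_0)/2]$. Exactly as in part~(2), $u_{\mathrm{high}}$ carries only $\mathcal O(h)$ of the $L^2$-mass, hence $\|u_{\mathrm{high}}\|_{L^\infty}=\mathcal O(1)$. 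Every eigenfunction $v_{\lambda_j}$ appearing in $u_{\mathrm{low}}$ has $\lambda_j\leq V(x_0)/2$, so its classically allowed set $\{V\leq\lambda_j\}$ lies at distance $\geq\rho_\delta>0$ from $x_0$; consequently $|v_{\lambda_j}(x_0)|=\mathcal O(h^\infty)$ uniformly in $j$. Since $u_{\mathrm{low}}$ involves only $\mathcal O(h^{-1})$ eigenfunctions and $\|u_{\mathrm{low}}\|_{L^2}\leq 1$, Cauchy--Schwarz gives $|u_{\mathrm{low}}(x_0)|=\mathcal O(h^\infty)$, and therefore $|u(x_0)|=\mathcal O_\delta(1)$.

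The main obstacle is the uniform decay estimate $|v_{\lambda_j}(x_0)|=\mathcal O(h^\infty)$ for all $\lambda_j\leq V(x_0)/2$ used in part~(3). For $\lambda_j$ bounded away from $0$ it is immediate, since by Proposition~\ref{l:well-qc} the eigenfunction $v_{\lambda_j}$ is microlocalized on $p^{-1}(\lambda_j)$, whose base projection $\{V\leq\lambda_j\}$ avoids $x_0$, so $v_{\lambda_j}$ is $\mathcal O(h^\infty)$ near $x_0$. The delicate range is $\lambda_j\to 0$, where these eigenfunctions are covered neither by Proposition~\ref{l:well-eig} nor by the uniform part of Proposition~\ref{l:well-qc}; for them one invokes Agmon exponential decay estimates in the classically forbidden region (equivalently, the harmonic-oscillator normal form at the bottom of the well recalled in the Remark after Proposition~\ref{l:well-qc}) to conclude that they too are negligible for $|x|\geq\delta$. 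The summation losses in part~(2) are by comparison harmless: the dyadic organization is chosen precisely so that the $\lesssim 2^k$ eigenfunctions in the $k$-th spectral shell are weighed against an $L^2$-mass of size $\lesssim 2^{-k}$.
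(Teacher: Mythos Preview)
Your argument is correct, and part~1 is essentially the paper's proof. Parts~2 and~3 take different routes.

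For part~2, the paper avoids the dyadic decomposition by taking a single smooth spectral cutoff $\tilde\chi\in\CIc(\lambda/2,2\lambda)$ equal to~$1$ near $\lambda$. The functional calculus operator $\tilde\chi(P(h))$ is pseudodifferential and equal to the identity microlocally near $p^{-1}(\lambda)$, so part~1 already gives $u=\tilde\chi(P(h))u+\mathcal O_{L^\infty}(1)$. One then writes $\tilde\chi(P(h))u=\sum_{\lambda_j\in[\lambda/2,2\lambda]}c_jv_j$ and applies Cauchy--Schwarz once:
\[
\|\tilde\chi(P(h))u\|_{L^\infty}\ \leq\ 2C_0h\,\Big(\sup_{\lambda_j\in[\lambda/2,2\lambda]}\|v_j\|_{L^\infty}\Big)\Big(\sum_{\lambda_j\in[\lambda/2,2\lambda]}(|\lambda_j-\lambda|+h)^{-2}\Big)^{1/2},
\]
where the supremum is $\mathcal O(h^{-1/6})$ by Proposition~\ref{l:well-eig} and the second factor is $\mathcal O(h^{-1})$ by the quantization condition. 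Both approaches rely on the same two inputs (eigenfunction bound and eigenvalue spacing); yours just organizes the Cauchy--Schwarz in shells, which is fine but not needed.

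The real simplification is in part~3. The paper does not decompose in the spectrum at all: it observes that there exists $\delta_1>0$, depending on $\delta$, such that if $|x|\geq\delta$ and $|\lambda|\leq\delta_1$ then $V(x)>\lambda$, hence $x\notin\pi(p^{-1}(\lambda))$, and~\eqref{eq:eps} from part~1 gives $|u(x)|=\mathcal O(1)$; while if $|\lambda|>\delta_1$, part~2 gives $\|u\|_{L^\infty}=\mathcal O_{\delta_1}(h^{-1/6})$. That is the entire proof. What you identify as the ``main obstacle''---uniform Agmon decay for exact eigenfunctions with eigenvalue near the bottom of the well---never arises, because this reduction uses only the approximate eigenfunction hypothesis~\eqref{e:well-ae} on $u$ itself and never passes through low-lying eigenfunctions. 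Your route is not wrong, but it imports machinery (Agmon estimates, or the harmonic-oscillator normal form) that the problem does not require.
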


\medskip\noindent
{\bf Remark.} We do not assume the localization condition
\eqref{e:original-cond-4} and work only under the hypothesis
\eqref{e:well-ae}. That is because we will apply the proposition to a
rescaled version of the original $ u $ which no longer satisfies
\eqref{e:original-cond-4}.
\medskip

\begin{proof}
1. The operator $P(h)-\lambda$ is elliptic with respect to the order
function $m(x,\xi)=1+x^2+\xi^2$ on $\supp(1-\chi)$; therefore, the
elliptic parametrix construction (as in~\cite[Section~4.5]{e-z}
or~\cite[Proposition~8.6]{d-s}, with minor adjustments found for
example in~\cite[Proposition~5.1]{d}) gives a symbol $q\in S(m^{-1})$
such that
$$
q^w(x,hD_x)(P(h)-\lambda)=1-\chi^w(x,hD_x)+{\mathcal O}_{L^2 \to L^2} (h^\infty).
$$
Therefore, by Sobolev embedding
\[ 
\begin{split}
\|(1-\chi^w(x,hD_x))u\|_{L^\infty}
& \lesssim \|(1-\chi^w(x,hD_x))u\|_{H^1} 
\lesssim h^{-1}\|(1-\chi^w(x,hD_x))u\|_{H^1_h}\\
& \lesssim h^{-1}\|q^w(x,hD_x)(P(h)-\lambda)u\|_{H^1_h} + {\mathcal O}
( h^\infty ) \| u \|_{L^2} \\
& \lesssim h^{-1}\|(P(h)-\lambda)u\|_{L^2} + {\mathcal O}
( h^\infty ) \| u \|_{L^2}  ={\mathcal O}(1).
\end{split}
\]
To show~\eqref{eq:eps}, it suffices to apply the previous inequality
with $\chi$ vanishing near $\pi^{-1}(x)$, but equal to 1 near
$p^{-1}(\lambda)$.

2. If $\lambda\leq-\delta$, then $p^{-1}(\lambda)=\emptyset$ and thus
part~1 of this proposition applies with $\chi \equiv 0$. We now assume that
$\lambda\geq\delta$.  Take $\tilde\chi\in \CIc(\lambda/2,2\lambda)$
equal to 1 near $\lambda$; then the operator $\tilde\chi(P(h))$,
defined by means of functional calculus, is pseudodifferential,
compactly microlocalized, and equal to the identity microlocally near
$p^{-1}(\lambda)$~\cite[Chapter~8]{d-s}.  By part~1 of this
proposition, we have
$$
u=\tilde\chi(P(h))u+{\mathcal O}_{L^\infty}(1);
$$
it remains to estimate $\|\tilde\chi(P(h))u\|_{L^\infty}$. Let
$v_1,v_2,\ldots\in \CI$ be the orthonormal basis of $L^2(\mathbb R)$
consisting of eigenfunctions of $P(h)$ with eigenvalues
$\lambda_1,\lambda_2,\dots$. If
$$
u=\sum_j c_j v_j,
$$
then by~\eqref{e:well-ae},
$$
\begin{gathered}
\sum_j (|\lambda_j-\lambda|+h)^2|c_j|^2\leq 4C_0^2h^2;\\
\|\tilde\chi(P(h))u\|_{L^\infty}\leq \sum_{\lambda/2\leq\lambda_j\leq 2\lambda}|c_j|\cdot\|v_j\|_{L^\infty}\\
\leq 2C_0 h \Big(\sup_{\lambda/2\leq\lambda_j\leq 2\lambda}\|v_j\|_{L^\infty}\Big)\cdot
\bigg(\sum_{\lambda/2\leq\lambda_j\leq 2\lambda}(|\lambda_j-\lambda|+h)^{-2}\bigg)^{1/2}.
\end{gathered}
$$
The sum in the second factor is ${\mathcal O}(h^{-1})$ by the quantization
condition~\eqref{e:well-qc}.  The supremum in the first factor is 
${\mathcal O}(h^{-1/6})$ by Proposition~\ref{l:well-eig}.

3. There exists $\delta_1>0$, depending on $\delta$, such that if
$|x|\geq\delta$ and $|\lambda|\leq\delta_1$, then
$x\not\in\pi(p^{-1}(\lambda))$. The case $|\lambda|\leq\delta_1$ is
then handled by~\eqref{eq:eps}, while the case $|\lambda|\geq\delta_1$
is handled by part~2 of this proposition.
\end{proof}

\section{Proof of Theorem \ref{t:1}}
\label{proo}

\subsection{Regular case}

We have
$$
g(r)[-h^2\Delta-1]=h^2[D_r^2+D_\varphi^2]-g(r).
$$
We then separate out the $\varphi$ variable:
if $u(r,\varphi)=u(r)e^{i\alpha\varphi/h}$, then
\begin{equation}\label{e:sep-cond}
u(r)\in \CIc(-\varepsilon,\varepsilon)\, ,\ \ 
\|u\|_{L^2}={\mathcal O}(1) \,,\ \ 
\|(h^2D_r^2+\alpha^2-g(r))u\|_{L^2}={\mathcal O}(h) \,. 
\end{equation}
Now, for $\varepsilon$ small enough,
\[ g(r)=V_0 - V(r+x_0) \,,  \ \ |r|<\varepsilon\, \]
where a potential $V(x)$ satisfies conditions of Section~\ref{s:well}.
Moreover, $|x_0|>2\varepsilon$. If $\tilde u(x)=u(x-x_0)$, then
$\|(P(h) - V_0+\alpha^2)\tilde u\|_{L^2}={\mathcal O}(h)$, where
$$
P(h)=h^2D_x^2+V(x).
$$
However, $\tilde u$ is supported away from zero; therefore, by part~3 of Proposition~\ref{l:well-est},
we find
$$
\|u\|_{L^\infty}={\mathcal O}(h^{-1/6})
$$
as required.

\subsection{Case of an equator}

As in the regular case, we can separate out the $\varphi$ variable and
obtain~\eqref{e:sep-cond}.  Now, take a potential $V(r)$ satisfying
conditions of Section~\ref{s:well} such that $V(r)=g(0)-g(r)$ for
$|r|<\varepsilon$. Then, $\|(P(h)-\lambda)u\|_{L^2}={\mathcal O}(h)$,
with
$$
P(h)=h^2D_r^2+V(r) \,, \ \ \
\lambda=g(0)-\alpha^2.
$$
For $\lambda$ or $r$ bounded away from zero, we can argue similarly to
the regular case. Indeed, \eqref{e:equator-est-1} follows from parts~2
and~3 of Proposition~\ref{l:well-est}.  To obtain the estimate
\eqref{e:equator-est-2} for $r$ bounded away from zero, we
use~\eqref{eq:eps}: the condition in
\eqref{e:equator-est-2} means that $ |\lambda| \leq \varepsilon r^2 $ so
that $r\not\in\pi(p^{-1}(\lambda))$.

If both $\lambda$ and $r$ are close to zero, we will use the natural
rescaling of the quantum harmonic oscillator: for some constant $C_1$
and $C_1^{-1}h\leq\beta\leq C_1$, define
$$
\begin{gathered}
(T_\beta f)(\tilde r)=\beta^{1/4}f(\beta^{1/2}\tilde r),\ f\in L^2(\mathbb R);\\
h_\beta=\beta^{-1}h,\
\lambda_\beta=\beta^{-1}\lambda \,,  \ \  \\
V_\beta(\tilde r)=\beta^{-1}V(\beta^{1/2}\tilde r), \ \ 
P_\beta(h_\beta)=h_\beta^2D_{\tilde r}^2+V_\beta(\tilde r).
\end{gathered}
$$
We note that if $ V ( r ) = r^2 $, then 
\[ P_\beta ( h_\beta ) =  h_\beta^2 D_{\tilde r }^2 + V ( \tilde r )
\,; \]
that is, the operator does not change.  For a general potential we get
closer to the harmonic oscillator as $ \beta \rightarrow 0
$. Moreover, the potential $V_\beta$ satisfies conditions of
Section~\ref{s:well} uniformly in $\beta$. Indeed, the only nontrivial
part is verifying the first of these conditions for $N=0,1$, and this
follows from the fact that $V(r)={\mathcal O}(r^2)$ and
$V'(r)={\mathcal O}(|r|+r^2)$. Therefore, the constants in the
estimates of Proposition~\ref{l:well-est} do not depend on $\beta$.

The operator  $T_\beta$ is unitary  on $L^2(\mathbb R)$ and
$$
P_\beta( h_\beta)-\lambda_\beta=\beta^{-1}T_\beta
(P(h)-\lambda)T_\beta^* \,, 
$$
Therefore, if $u_\beta=T_\beta u$, then
$$
\|u_\beta\|_{L^2}={\mathcal O}(1),\
\|(P_\beta(h_\beta)-\lambda_\beta)u_\beta\|_{L^2}={\mathcal O}(h_\beta).
$$
For $\varepsilon>0$ small enough, we have $P(h)\geq
\varepsilon(h^2D_r^2+ r^2)$; the ground state of the quantum harmonic
oscillator~\cite[Section~6.1]{e-z} then gives $P(h)\geq\varepsilon h$.
Therefore, we can assume that $|\lambda|\geq\varepsilon h$~-- indeed,
if $|\lambda|\leq\varepsilon h$, then we can replace $\lambda$ by
$\varepsilon h$ and obtain same conditions on $u$ and stronger
conclusions (except for~\eqref{e:equator-est-2} in case when
$r^2<h$~-- however, in this case $r^{-1/2}\geq h^{-1/4}$
and~\eqref{e:equator-est-2} follows from~\eqref{e:equator-est-1}).
Now, consider the following two subcases:
\begin{enumerate}
\item $|\lambda|<\varepsilon r^2$. Since $|\lambda|\geq\varepsilon h$,
we have $| r| >h^{1/2}$. Put $\beta=r^2$; then $u(r)=r^{-1/2}u_\beta(\pm
1)$ (depending on the sign or $r$) and $|\lambda_\beta|<\varepsilon$.
For $\varepsilon$ small enough, $\tilde r=\pm 1$ does not lie in the
projection of $\{\xi^2+V_\beta(\tilde r)=\lambda_\beta\}$; therefore,
\eqref{eq:eps} applies, giving $|u_\beta(\pm 1)|={\mathcal O}(1)$ and
thus $|u(r)|={\mathcal O}(r^{-1/2})$; this
proves~\eqref{e:equator-est-2}, which implies~\eqref{e:equator-est-1}
in the present case.
\item $|\lambda|\geq\varepsilon r^2$, where $\varepsilon$ is chosen as
in case (1).  Put $\beta=|\lambda|$; then $\lambda_\beta=\pm 1$
depending on the sign of $\lambda$ and
$u(r)=|\lambda|^{-1/4}u_\beta(\tilde r)$, where $\tilde
r=|\lambda|^{-1/2}r$ is bounded. Now, part~2 of
Proposition~\ref{l:well-est} applies for $\lambda_\beta=1$ and part~1
of the same proposition applies for $\lambda_\beta=-1$ (with
$\chi=0$), giving $|u_\beta(\tilde r)|={\mathcal O}(h_\beta^{-1/6})$
and thus $|u(r)|={\mathcal O}(h^{-1/6}|\lambda|^{-1/12})$.  This
proves~\eqref{e:equator-est-1} as the third term in the minimum
expression is controlled by each of the other two in the present case.
\end{enumerate}

\subsection{Case of a pole}

Define the operators $P(h)=-h^2\Delta$ and $Q(h)=hD_\varphi$; let
$p,q$ be the corresponding semiclassical principal symbols; for
$\lambda\geq 0$, consider the compact set
$$
\Lambda(\lambda,\alpha)=\{p=\lambda,\ q=\alpha\}\subset T^*M.
$$
We start by an analogue of part~1 of Proposition~\ref{s:well}:
\begin{prop}\label{l:elliptic-pole}
Assume that $X(h)$ is a compactly microlocalized pseudodifferential
operator on $\mathbb S^2$ microlocally equal to the identity near
$\Lambda(1,\alpha)$.  Then
$$
\|(1-X(h))u\|_{L^\infty}={\mathcal O}(1).
$$
\end{prop}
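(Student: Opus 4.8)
The plan is to follow the pattern of part~1 of Proposition~\ref{l:well-est}, with the single operator $P(h)-\lambda$ there replaced by the pair $-h^2\Delta-1$ and $hD_\varphi-\alpha$, which is \emph{jointly} elliptic away from $\Lambda(1,\alpha)=\{p=1\}\cap\{q=\alpha\}$. Since $X(h)$ is compactly microlocalized and $u$ obeys the localization bound~\eqref{e:original-cond-4} with some compactly microlocalized operator, which I will call $Y(h)$, the first step is to reduce to a compactly microlocalized problem: writing $(1-X(h))u=Gu+\mathcal O_{\CI}(h^\infty)$ with $G:=(1-X(h))Y(h)$, the operator $G$ is a compactly microlocalized pseudodifferential operator whose principal symbol vanishes wherever $X(h)$ is microlocally the identity, so that its semiclassical wavefront set is a compact subset of $T^*\mathbb S^2\setminus\Lambda(1,\alpha)$.

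Next I would decompose $G$ according to the two ways a point can avoid $\Lambda(1,\alpha)$. The set $\operatorname{WF}_h(G)$ is covered by the open sets $\{p\neq 1\}$ and $\{q\neq\alpha\}$, so a microlocal partition of unity (\cite[Chapter~4]{e-z}) gives $G=G_1+G_2+\mathcal O_{\CI}(h^\infty)$ with $\operatorname{WF}_h(G_1)\subset\{q\neq\alpha\}$ and $\operatorname{WF}_h(G_2)\subset\{p\neq 1\}$. On $\operatorname{WF}_h(G_1)$ the operator $hD_\varphi-\alpha$ is elliptic, so an elliptic parametrix produces a compactly microlocalized $B_1$ with $G_1=B_1(hD_\varphi-\alpha)+\mathcal O(h^\infty)$; since $(hD_\varphi-\alpha)u=0$ exactly by~\eqref{e:original-cond-3}, this yields $G_1u=\mathcal O_{\CI}(h^\infty)$. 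On $\operatorname{WF}_h(G_2)$ the second-order operator $-h^2\Delta-1$ is elliptic, and the elliptic parametrix construction (\cite[Section~4.5]{e-z}) gives $A_2$ with $A_2(-h^2\Delta-1)=G_2+\mathcal O(h^\infty)$ and $A_2\colon L^2\to H^N_h$ bounded for any fixed $N$; hence, using~\eqref{e:original-cond-1}--\eqref{e:original-cond-2},
$$\|G_2u\|_{H^N_h}\lesssim \|(-h^2\Delta-1)u\|_{L^2}+\mathcal O(h^\infty)\|u\|_{L^2}=\mathcal O(h).$$

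Combining the two pieces gives $\|(1-X(h))u\|_{H^N_h}=\mathcal O(h)$ for any $N$; choosing $N>\dim\mathbb S^2/2=1$ and applying the semiclassical Sobolev embedding $\|v\|_{L^\infty}\lesssim h^{-1}\|v\|_{H^N_h}$ loses a single power of $h$ and produces $\|(1-X(h))u\|_{L^\infty}=\mathcal O(1)$, as required.

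The one point that requires care is the behaviour near the two poles, where $\varphi$ is not a coordinate and $\partial_\varphi$ degenerates. There one must phrase the calculus in the coordinates $(x,y)=(r\cos\varphi,r\sin\varphi)$, in which $hD_\varphi=h(xD_y-yD_x)$ is a genuine first-order semiclassical differential operator with symbol $x\eta-y\xi$. This symbol vanishes on the whole fiber over a pole, so $hD_\varphi-\alpha$ fails to be elliptic there precisely when $\alpha=0$; but in that case $\Lambda(1,0)$ already contains the entire unit cosphere over each pole, so the part of $\operatorname{WF}_h(G)$ lying over the poles is contained in $\{p\neq 1\}$ and is absorbed into $G_2$. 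Everything else is the standard elliptic parametrix and partition-of-unity machinery, and I expect this pole analysis to be the only place where more than routine bookkeeping is needed.
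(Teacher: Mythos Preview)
Your argument is correct and follows essentially the same route as the paper: use the localization assumption~\eqref{e:original-cond-4} to reduce to a compactly microlocalized operator, split via a microlocal partition of unity according to whether $p\neq 1$ or $q\neq\alpha$, apply an elliptic parametrix to each piece, and finish with the $L^2\to L^\infty$ mapping property of compactly microlocalized operators (equivalently, semiclassical Sobolev embedding). The paper compresses all of this into the single formula $(1-X(h))u=A(h)(P(h)-1)u+B(h)(Q(h)-\alpha)u+\mathcal O_{\CI}(h^\infty)$; your version spells out the partition step and the pole analysis explicitly, but the content is the same.
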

\begin{proof}
Using the localization assumption~\eqref{e:original-cond-4}, a
microlocal partition of unity, and the elliptic parametrix
construction (see the proof of Proposition~\ref{l:well-est}) we can
write
\begin{equation}
\label{eq:Xh}
(1-X(h))u=A(h)(P(h)-1)u+B(h)(Q(h)-\alpha)u+{\mathcal O}_{\CI}(h^\infty),
\end{equation}
where both $A(h)$ and $B(h)$ are pseudodifferential and compactly microlocalized,
and thus act $L^2\to L^\infty$ with norm ${\mathcal O}(h^{-1})$~\cite[Theorem~7.10]{e-z}.
We now  use  ~\eqref{e:original-cond-2} and~\eqref{e:original-cond-3}
in \eqref{eq:Xh} and that proves the proposition.
\end{proof}
If $(\xi,\eta)$ are the momenta corresponding to the coordinates $(x,y)=(r\cos\varphi,r\sin\varphi)$,
then
$$
p=g(x^2+y^2)^{-1}(\xi^2+\eta^2),\
q=x\eta-y\xi.
$$
Therefore, if $\alpha$ is bounded away from zero and $\varepsilon$ is
small, then the projection of $\Lambda(1,\alpha)$ onto $M$ does not
intersect $U_\varepsilon=\{r<\varepsilon\}$ and
$\|u\|_{L^\infty}={\mathcal O}(1)$ by
Proposition~\ref{l:elliptic-pole}. Therefore, we may assume that
$|\alpha|<\varepsilon$. The symbols $p$ and $q$ have linearly
independent differentials on $\Lambda(1,0)$, and they Poisson commute;
therefore, $\Lambda(\lambda,\alpha)$ is a Lagrangian torus for small
$\alpha$ and $\lambda$ close to 1. The joint spectrum of $P(h),Q(h)$
near $(1,0)$ obeys a quantization condition; in particular, the
spectrum of $P(h)$ restricted to the eigenspace $\{Q(h)=\alpha\}$ is
approximated by a formula similar to~\eqref{e:well-qc}. (See~\cite{ch}
or~\cite[Theor\`eme~5.1.11]{svn}.) Similarly to the proof of part~2 of
Proposition~\ref{l:well-est}, we reduce the problem to the following
\begin{prop}
If $\varepsilon>0$ is small enough, then
each $L^2$ normalized joint eigenfunction $v$ for $P(h),Q(h)$ with eigenvalue
$(\lambda,\alpha)$, where $|\lambda-1|,|\alpha|<\varepsilon$,
satisfies the following estimates for $|r|<\varepsilon$:
\begin{gather}
\label{e:pole-est2-1}
|v(r,\varphi)|={\mathcal O}(\min(h^{-1/2},h^{-1/6}r^{-1/3},h^{-1/6}|\alpha|^{-1/3}));\\
\label{e:pole-est2-2}
|v(r,\varphi)|={\mathcal O}(r^{-1/2})\text{ for }|\alpha|<\varepsilon r.
\end{gather}
\end{prop}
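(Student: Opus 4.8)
The plan is to separate the $\varphi$ variable, reduce to a one–dimensional radial problem on the half–line, and split the band $\{r<\varepsilon\}$ into the part lying past the turning circle $r\asymp|\alpha|$ (where $\Lambda(1,\alpha)$ projects nicely onto the base) and the part near that circle and the pole, treating the latter by rescalings adapted to the pole --- the analogue of the equator case, with the quantum harmonic oscillator replaced by a half–line operator carrying an inverse–square centrifugal potential. Since $Q(h)v=\alpha v$ with $\alpha=hk$, $k\in\mathbb Z$, automatically $v=w(r)e^{ik\varphi}$, and by~\eqref{e:pole-band} the equation $P(h)v=\lambda v$ becomes $-h^2(w''+r^{-1}w')+\alpha^2r^{-2}w=\lambda g(r^2)w$ on $0<r<\varepsilon$. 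Substituting $w=r^{-1/2}\tilde w$ to remove the first order term gives $\big(h^2D_r^2+(\alpha^2-h^2/4)r^{-2}-\lambda(g(r^2)-g(0))\big)\tilde w=\lambda g(0)\,\tilde w$, with $\|\tilde w\|_{L^2(dr)}\asymp 1$ (because $|w|^2r\,dr=|\tilde w|^2dr$ and $g(r^2)\asymp g(0)$) and, crucially, $|v(r,\varphi)|=r^{-1/2}|\tilde w(r)|$; after multiplying by a cutoff equal to $1$ on $\{r<\varepsilon\}$ and supported in $\{r<2\varepsilon\}$, at the cost of an $\mathcal O(h)$ error on the right, we may take $\tilde w$ compactly supported and work with the $\mathcal O(h)$-error estimates. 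The forbidden region is $\{r<r_*\}$, $r_*\asymp|\alpha|$, and $r=r_*$ --- where $\Lambda(1,\alpha)$ folds over the base --- is the only turning point in the band; the case $|\alpha|$ bounded below is already handled by Proposition~\ref{l:elliptic-pole}, so I assume $|\alpha|<\varepsilon$.

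First I would dispose of the region past the fold and away from the pole, which is automatic once $|\alpha|<\varepsilon r$ apart from an $\mathcal O(\sqrt h)$-neighborhood of the pole (where the Bessel rescaling below applies): there $\Lambda(1,\alpha)$ projects diffeomorphically onto the base, so Proposition~\ref{l:horizontal} gives $|\tilde w(r)|=\mathcal O(1)$ and hence $|v|\lesssim r^{-1/2}$. This proves~\eqref{e:pole-est2-2} there, and since for $\varepsilon$ small the condition $|\alpha|<\varepsilon r$ forces $r^{-1/2}$ to be dominated by each of $h^{-1/2}$, $h^{-1/6}r^{-1/3}$ and $h^{-1/6}|\alpha|^{-1/3}$, it also gives~\eqref{e:pole-est2-1} there.

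It remains to treat the neighborhood of the fold together with its forbidden shadow and (for $\alpha$ near $0$) the pole. When $|\alpha|\gg h$ one rescales $r=|\alpha|\tilde r$: the operator becomes $h_\beta^2D_{\tilde r}^2+\tilde r^{-2}+\mathcal O(|\alpha|^2)$ with $h_\beta=h/|\alpha|\to0$, energy $\asymp g(0)$ and a nondegenerate fold at $\tilde r_*\asymp1$, so the third–derivative/Airy estimate of Proposition~\ref{l:3rd-derivative-estimate} controls the associated Lagrangian distribution near $\tilde r_*$ --- and, through the decaying Airy/forbidden–region tail, for $\tilde r<\tilde r_*$ down to the indicial behaviour $\tilde r^{|k|+1/2}$ at $\tilde r=0$ --- by $h_\beta^{-1/6}$ times its bulk amplitude; since the allowed $\tilde r$-interval in the band has length $\asymp|\alpha|^{-1}$, normalization forces the bulk amplitude to be $\asymp|\alpha|^{1/2}$, and undoing the scaling the decay of $\tilde w$ for $r<r_*$ exactly compensates the growth of $r^{-1/2}$, so that $|v(r)|=r^{-1/2}|\tilde w(r)|\lesssim h^{-1/6}|\alpha|^{-1/3}$ throughout $r\lesssim|\alpha|$ (the region just past the fold being similar, and $\mathcal O(r^{-1/2})$) --- which for $|\alpha|\gg h$ is dominated by $h^{-1/2}$ and by $h^{-1/6}r^{-1/3}$, proving~\eqref{e:pole-est2-1}. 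When $|\alpha|=\mathcal O(h)$ (in particular $\alpha=0$) the fold has merged with the pole and this scaling degenerates; instead one rescales $r=h\tilde r$, which turns the radial equation into the Bessel equation of order $|k|=|\alpha|/h$ at fixed energy $\lambda g(0)$ (modulo an $\mathcal O(h^2\tilde r^2)$ perturbation), so that $v=C_0h^{-1/2}J_{|k|}\!\big(\sqrt{\lambda g(0)}\,r/h\big)e^{ik\varphi}$ up to controlled errors, with $|C_0|\asymp1$ fixed by matching to the oscillatory regime $r\gtrsim h$. The classical uniform bounds $|J_{|k|}(t)|\lesssim\min\!\big(1,\langle k\rangle^{-1/3},t^{-1/3}\big)$ and $|J_{|k|}(t)|\lesssim t^{-1/2}$ for $t\gtrsim2|k|+1$ then yield all of~\eqref{e:pole-est2-1}--\eqref{e:pole-est2-2}; in particular the $h^{-1/2}$ term is $|C_0|h^{-1/2}\cdot\sup_t|J_{|k|}(t)|$, i.e.\ the focusing loss at the pole coming from the fiber circle of $\Lambda(1,0)$. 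The two rescalings overlap for $h\ll|\alpha|\ll\sqrt h$, so one glues them there.

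The main obstacle will be the bookkeeping of the $L^2$ normalization under these rescalings: the bare ``$\mathcal O(h_\beta^{-1/6})$'' eigenfunction bound of Propositions~\ref{l:well-eig}--\ref{l:well-est} does not apply verbatim in the fold region, because after rescaling by $|\alpha|$ the solution is spread over a $\tilde r$-interval of length $\asymp|\alpha|^{-1}$ rather than a fixed one, so one must either track how the bulk amplitude, the fold enhancement and the $r^{-1/2}$ factor combine, or establish once and for all the half–line analogue of the potential–well machinery of Section~\ref{s:well} for $h^2D_r^2+\alpha^2r^{-2}$ with the weight built in. A secondary point is that the conformal factor perturbation $\lambda(g(r^2)-g(0))$ is only $\mathcal O(\varepsilon^2)$, not $\mathcal O(h)$, on the band: it ruins the pointwise Bessel/WKB approximation of the phase, but changes the energy only by a slowly–varying relative $\mathcal O(\varepsilon^2)$, moving the fold by $\mathcal O(\varepsilon^2)$ and leaving all the size estimates stable for $\varepsilon$ small --- a Liouville--Green normal form should make this rigorous. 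Pinning the constant $|C_0|\asymp1$ (hence the exact $h^{-1/2}$) and checking that the regimes overlap with matching bounds costs some care but no new ideas.
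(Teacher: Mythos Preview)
Your separation-of-variables/Bessel approach is a legitimate alternative and should ultimately work, but the paper takes a rather different and more unified route. Instead of reducing to the radial half-line operator with centrifugal potential $\alpha^2/r^2$ and then splitting into the regimes $|\alpha|\gg h$ (Airy/WKB at the fold after rescaling by $|\alpha|$) and $|\alpha|=\mathcal O(h)$ (explicit Bessel asymptotics after rescaling by $h$), the paper stays in two dimensions and writes $v$ directly as a Lagrangian distribution on $\Lambda(\lambda,\alpha)\subset T^*M$. The key observation is that near the pole $\Lambda$ can be parametrized by $(x,\eta)$, yielding a single generating phase $\Phi=y\eta-S(x,\eta;\lambda,\alpha)$ and hence a one-dimensional oscillatory integral for $v$. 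The paper then performs a \emph{single} joint rescaling $(r,\alpha)=(s\tilde r,s\tilde\alpha)$, $\tilde r^2+\tilde\alpha^2=1$, $\tilde h=h/s$; after the substitution $\eta=\sqrt{\lambda g(0)}\sin\theta$ the rescaled phase becomes $\tilde r\sqrt{\lambda g(0)}\cos(\theta-\varphi)+\tilde\alpha\,\theta+\mathcal O(s)$, which satisfies the third-derivative hypothesis of Proposition~\ref{l:3rd-derivative-estimate} uniformly. One application of van der Corput gives $|v|\lesssim s^{-1/2}\tilde h^{-1/6}=s^{-1/3}h^{-1/6}\leq\min(r^{-1/3},|\alpha|^{-1/3})\,h^{-1/6}$, while for $|\alpha|<\varepsilon r$ (so $\tilde\alpha$ small) the phase is Morse and stationary phase gives $|v|\lesssim s^{-1/2}\asymp r^{-1/2}$. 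No tracking of bulk amplitudes, no half-line analogue of Section~\ref{s:well}, no gluing of two regimes, and no appeal to pre-packaged Bessel bounds is needed --- indeed the rescaled integral \emph{is} essentially the Bessel integral representation, so the paper is in effect reproving Landau's $|J_k(t)|\lesssim\min(t^{-1/3},\langle k\rangle^{-1/3})$ via van der Corput rather than quoting it.

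What your approach buys is an elementary, ODE-flavoured argument that sidesteps the two-dimensional Lagrangian formalism; the price is exactly the obstacles you flag. Proposition~\ref{l:well-est} does not cover the singular $\tilde r^{-2}$ potential, so you genuinely need a half-line version; the $|\alpha|$-rescaled solution lives on an interval of length $\asymp|\alpha|^{-1}$, so its $L^2$ norm on a fixed neighborhood of the fold must be extracted separately before the $h_\beta^{-1/6}$ Airy bound can be applied; and the two rescalings must be matched. None of this is fatal, but it is substantial extra work, whereas the paper's argument needs nothing beyond Propositions~\ref{l:generating} and~\ref{l:3rd-derivative-estimate}.
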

\begin{proof}
As follows from the proof of the quantization condition,
$v(r,\varphi)$ is a Lagrangian distribution associated to
$\Lambda(\lambda,\alpha)$.  Using a microlocal partition of unity and
the rotational symmetry of the problem, we can reduce to the case when
$v$ is microlocalized in $\{\xi>\eta/2,\ |r|<\varepsilon\}$.  However,
in this region $\Lambda(\lambda,\alpha)$ can be parametrized by
$x,\eta$, since the matrix of derivatives of $p,q$ in $y,\xi$ is
nondegenerate for $x=y=0$; by Proposition~\ref{l:generating},
$\Lambda(\lambda,\alpha)$ is generated by a phase function of the form
$\Phi=y\eta-S(x,\eta;\lambda,\alpha)$, where  $S$ is a smooth
function. We recall that this means that on $ p = \lambda $ and
$ q = \alpha $, $ \xi = -\partial_x S $ and $ y = \partial_\eta S $.

From this we calculate
\[ 
\begin{split}
& \partial_x S(0,\eta,\lambda,\alpha)=  - \sqrt {g (y^2) \lambda  -
  \eta^2} = -\sqrt{\lambda g(0)-\eta^2}+{\mathcal O}(\alpha^2)\,,\\
& \partial_\eta S(0,\eta,\lambda,\alpha)= - \alpha / \xi = 
- \frac{\alpha }{\sqrt{\lambda g(0)-\eta^2}}+{\mathcal O}(\alpha^2);
\end{split}
\]
if we normalize $S$ by the condition $S(0,0,\lambda,\alpha)=0$, then
$
S(0,\eta , \lambda,0)=0 $, and 
\[ \begin{split}  \partial_\alpha S(0,\eta,\lambda,0) & =  \partial_\alpha S ( 0 , 0, 
\lambda , 0 ) + \int_0^\eta \partial_\alpha \partial_\eta S ( 0 ,
t, \lambda, 0 ) dt = - \int_0^\eta \frac {dt}  {\sqrt {\lambda g ( 0 ) -
  t^2 } } \\
& = - \arcsin ( \eta / { \sqrt { \lambda g ( 0)} } ) \,. 
\end{split} \]
Now, put $(r,\alpha)=(s\tilde r,s\tilde\alpha)$ with $\tilde
r^2+\tilde\alpha^2=1$ and define $\tilde h=h/s$; we get
by~\eqref{e:oi},
\[
\begin{split}
v(r,\varphi;\lambda,\alpha,h) & =h^{-1/2}\int \exp 
\left( {i(s\tilde r\eta\sin\varphi-S(s\tilde
    r\cos\varphi,\eta;\lambda,s\tilde\alpha))/h}  \right) a\,d\eta\\
& =s^{-1/2}\tilde h^{-1/2}\int \exp \left( {i\tilde\Phi(\eta,s,\tilde
    r,\tilde\alpha,\varphi,\lambda)/\tilde h}
\right) a\,d\eta, 
\end{split} \]
where
\[  \tilde\Phi  =\tilde r(\eta\sin\varphi + \sqrt{\lambda g(0)-\eta^2}\cos\varphi)
+ \tilde\alpha\arcsin(\eta/\sqrt{\lambda g(0)})+{\mathcal O}(s) \,,
\]
and
\[  a = a ( s \tilde r \cos \varphi, s \tilde r \sin \varphi , \eta )
\,, \ \ a \in \CIc ( \mathbb R^3 ) \,. \]
For $s$ small enough, $\tilde\Phi$ satisfies the condition of
Proposition~\ref{l:3rd-derivative-estimate}. Indeed, under the change
of variables $\eta=\sqrt{\lambda g(0)}\sin\theta$ we get
$\tilde\Phi=\tilde r\sqrt{\lambda
g(0)}\cos(\theta-\varphi)+\tilde\alpha\theta+{\mathcal O}(s)$.
Therefore,
$$
|v|={\mathcal O}(s^{-1/2}\tilde h^{-1/6})={\mathcal O}(s^{-1/3}h^{-1/6}).
$$
This proves~\eqref{e:pole-est2-1} (note that the bound ${\mathcal
O}(h^{-1/2})$ follows directly from the integral representation).  To
show~\eqref{e:pole-est2-2}, note that for $|\alpha|<\varepsilon r$,
$\tilde\alpha$ is close to zero; then $\tilde\Phi$ is a Morse function
and we can use stationary phase in place of
Proposition~\ref{l:3rd-derivative-estimate} (see also the beginning of
the proof of that proposition).
\end{proof}

\noindent\textbf{Acknowledgements.} 
NB acknowledges partial support from Agence Nationale de
la Recherche project ANR-07-BLAN-0250. 
SD and MZ acknowledge partial
support by the National Science Foundation under the grant
DMS-0654436. They are also grateful to Universit\'e Paris-Nord
for its generous hospitality in the Spring 2011 when this paper
was written.



\begin{thebibliography}{0}

\bibitem{cmb}
P.~Abrial, Y.~Moudden, J.~Starck, J.~Fadili, J.~Delabrouille, and M.~Nguyen, 
{\em C.M.B. data analysis and sparsity,\/}
Stat. Methodol. \textbf{5}(2008), 289--298.

\bibitem{A} I.~Alexandrova,
{\em Semi-Classical wavefront set and Fourier integral operators,\/}
Can. J. Math. {\bf 60}(2008), 241--263.

\bibitem{b-g-t} N.~Burq, P.~G\'erard, and N.~Tzvetkov,
{\em Restrictions of the Laplace--Beltrami eigenfunctions to submanifolds,\/}
Duke. Math. J. \textbf{138}(2007), 445--486.

\bibitem{crt} E.~Candes, J.~Romberg, and T.~Tao,
{\em Robust uncertainty principles: exact signal reconstruction from highly incomplete 
frequency information, \/}
IEEE Trans. Inform. Theory {\bf 52}(2006), 489--509.

\bibitem{ch} A.-M.~Charbonnel,
{\em Spectre conjoint d'op\'erateurs pseudodiff\'erentiels qui commutent,\/}
Ann. Fac. Sci. Toulouse Math. \textbf{5}(1983), 109--147.

\bibitem{cdv} Y.~Colin de~Verdi\`ere,
{\em Spectre conjoint d'op\'erateurs pseudo-diff\'erentiels
qui commutent. II. Le cas int\'egrable,\/}
Math. Z. \textbf{171}(1980), 51--73.

\bibitem{d-s} M.~Dimassi and J.~Sj\"ostrand,
{\em Specral asymptotics in the semi-classical limit,\/}
Cambridge University Press, 1999.

\bibitem{do} D.~Donoho,
{\em Compressed sensing,\/}
IEEE Trans. Inform. Theory {\bf 52}(2006), 1289--1306.

\bibitem{dohuo} D.~Donoho and X.~Huo,
{\em Uncertainty principles and ideal atomic decompositions,\/}
IEEE Trans. Inform. Theory {\bf 47}(2001), 2845--2862.

\bibitem{dostarck} D.~Donoho and P.~Starck,
{\em Uncertainty principles and signal recovery,\/}
SIAM J. Appl. Math {\bf 49} 3 (1989), 906--931.

\bibitem{d} S.~Dyatlov,
{\em Quasi-normal modes and exponential energy decay
for the Kerr--de Sitter black hole,\/}
Comm. Math. Phys. \textbf{306}(2011), 119--163.

\bibitem{e-z} L.C.~Evans and M.~Zworski,
{\em Semiclassical analysis, }  Graduate Studies in Mathematics,
AMS, to appear.

\bibitem{gr-s} A.~Grigis and J.~Sj\"ostrand,
{\em Microlocal analysis for differential operators: an introduction,\/}
Cambridge University Press, 1994.

\bibitem{g-s} V.~Guillemin and S.~Sternberg,
{\em Geometric asymptotics,\/}
AMS, 1990.

\bibitem{g-s2} V.~Guillemin and S.~Sternberg,
{\em Semi-classical analysis,\/} lecture notes, \\
{\tt http://www-math.mit.edu/$\sim$vwg/semiclassGuilleminSternberg.pdf}

\bibitem{Horm} L.~H\"ormander, 
{\em The spectral function of an elliptic operator,\/} Acta Math.
{\bf 121}(1968), 193--218.

\bibitem{h1} L.~H\"ormander, {\em The Analysis of               
Linear Partial Differential Operators. I. Distribution Theory
and Fourier Analysis,\/}
Springer Verlag, 1983.

\bibitem{h4} L.~H\"ormander, {\em The Analysis of               
Linear Partial Differential Operators. IV. Fourier Integral Operators,\/}
Springer Verlag, 1994.

\bibitem{krasikov}
I.~Krasikov,
{\em On the Erdelyi-Magnus-Nevai conjecture for Jacobi polynomias, \/}
Constr. Approx., 2008, 113--125.

\bibitem{protein}
R.~Morris, R.~Najmanovich, A.~Kahraman, and J.~Thornton,
{\em Real spherical harmonic expansion coefficients as 3D shape descriptors for protein binding pocket and ligand comparison,\/}
Bioinformatics, {\bf 21} 10 (2005), 2347--2355.

\bibitem{strucrand} H.~Rauhut,
{\em Compressive Sensing and Structured Random Matrices,\/}
in Theoretical Foundations and Numerical Methods for Sparse Recovery,
Radon Series Comp. Appl. Math, deGruyter,  {\bf 9}(2010), 1--92.

\bibitem{rauhut} H.~Rauhut,
{\em Stability results for random sampling of sparse trigonometric polynomials,\/}
IEEE Trans. Inform. Theory, {\bf 54} 12 (2006). 5661--5670. 

\bibitem{RW2} H.~Rauhut and R.~Ward,
{\em Sparse Legendre expansions via $\ell_1$-minimization,\/}
Submitted (2010).

\bibitem{RaW}  H.~Rauhut and R.~Ward, 
{\em Sparse recovery for spherical harmonic expansions,\/} 
Proc. SampTA 2011,  Singapore,  2011.

\bibitem{ruve} M.~Rudelson and R.~Vershynin,
{\em On sparse reconstruction from Fourier and Gaussian measurements,\/}
Comm. Pure Appl. Math, {\bf 61}(2008), 1025--1045. 

\bibitem{Stein} E.M.~Stein and T.S.~Murphy,
{\em Harmonic Analysis: real variable methods, orthogonality, and
oscillatory integrals,\/} 
Princeton University Press, 1993.

\bibitem{svn} San V\~u Ng\d oc,
{\em Syst\`emes int\'egrables semi-classiques: du local au global,\/}
Panoramas et Synth\`eses 22, 2006.

\end{thebibliography}
\end{document}